\titlespacing{\section}{0cm}{3.5pc}{1.5pc}
\def\@citex[#1]#2{\if@filesw\immediate\write\@auxout{\string\citation{#2}}\fi
  \def\@citea{}\@cite{\@for\@citeb:=#2\do
    {\@citea\def\@citea{\@citesep}\@ifundefined
       {b@\@citeb}{{\bf ?}\@warning
       {Citation `\@citeb' on page \thepage \space undefined}}%
{\csname b@\@citeb\endcsname}}}{#1}}
\def\@citesep{; }
\newtheoremstyle{Kang}{}{}{\itshape}{}{\bf}{}{.5em}{}
\theoremstyle{Kang}
\newtheorem{theorem}{Theorem}[section]
\newtheorem{lemma}[theorem]{Lemma}
\newtheoremstyle{Kremark}{}{}{}{}{\bf}{}{.5em}{}
\theoremstyle{Kremark}
\newtheorem*{remark}{Remark.}
\newtheorem{defn}[theorem]{Definition}
\newtheorem{other}{}
\newtheorem{question}[theorem]{Question}
\newenvironment{idef}[1]{\begin{other}}{\end{other}}
\newenvironment{Case}[1]{\medskip {\it Case #1.}}{}
\def\fn#1{\operatorname{#1}} 
\def\bm#1{\mathbbm{#1}}
\def\side#1#2{\mathop{^{#1}\mkern-2mu#2}}
\title{NOETHER'S PROBLEM AND \\[2mm] UNRAMIFIED BRAUER GROUPS}
\author{Akinari Hoshi$^1$, Ming-chang Kang$^2$ and Boris E. Kunyavskii$^3$ \\[3mm]
\begin{minipage}{16cm} \begin{description} \itemsep=-1pt
\item[] $^{(1)}$Department of Mathematics, Rikkyo University, Tokyo, Japan
\item[] $^{(2)}$Department of Mathematics and Taida Institute of Mathematical\\ Sciences,
National Taiwan University, Taipei, Taiwan
\item[] $^{(3)}$Department of Mathematics, Bar-Ilan University, 52900 Ramat\\ Gan, Israel
\end{description} \end{minipage}}
\date{}
\begin{document}

\maketitle

\footnote{\textit{\!\!\!$2010$ Mathematics Subject
Classification}. Primary 13A50, 14E08, 14M20, 20J06, 12F12.}
\footnote{\textit{\!\!\!Keywords and phrases}. Noether's problem,
rationality problem, unramified Brauer groups, Bogomolov
multipliers, rationality, retract rationality.}
\footnote{\!\!\!E-mails: hoshi@rikkyo.ac.jp, kang@math.ntu.edu.tw,
kunyav@macs.biu.ac.il.} \footnote{\!\!\!The first-named author was
partially supported by KAKENHI (22740028). The second-named author
was partially supported by National Center for Theoretic Sciences
(Taipei Office). The third-named author was partially supported by
the Minerva Foundation through the Emmy Noether Research Institute
for Mathematics. Parts of the work of this paper were finished
while the first-named author visited National Center for Theoretic
Sciences (Taipei).}

\begin{abstract}
{\noindent\bf Abstract.} Let $k$ be any field, $G$ be a finite group
acing on the rational function field $k(x_g:g\in G)$ by $h\cdot
x_g=x_{hg}$ for any $h,g\in G$. Define $k(G)=k(x_g:g\in G)^G$.
Noether's problem asks whether $k(G)$ is rational (= purely
transcendental) over $k$. It is known that, if $\bm{C}(G)$ is
rational over $\bm{C}$, then $B_0(G)=0$ where $B_0(G)$ is the
unramified Brauer group of $\bm{C}(G)$ over $\bm{C}$. Bogomolov
showed that, if $G$ is a $p$-group of order $p^5$, then $B_0(G)=0$.
This result was disproved by Moravec for $p=3,5,7$ by computer
calculations. We will prove the following theorem. Theorem. Let $p$
be any odd prime number, $G$ be a group of order $p^5$. Then
$B_0(G)\ne 0$ if and only if $G$ belongs to the isoclinism family
$\Phi_{10}$ in R. James's classification of groups of order $p^5$.
\end{abstract}

\newpage
\section{Introduction}

Let $k$ be any field and $G$ be a finite group. Let $G$ act on the
rational function field $k(x_g:g\in G)$ by $k$-automorphisms so that
$g\cdot x_h=x_{gh}$ for any $g,h\in G$. Denote by $k(G)$ the fixed
field $k(x_g:g\in G)^G$. Noether's problem asks whether $k(G)$ is
rational (= purely transcendental) over $k$. It is related to the
inverse Galois problem, to the existence of generic $G$-Galois
extensions over $k$, and to the existence of versal $G$-torsors over
$k$-rational field extensions \cite[33.1, p.~86]{Sw,Sa1,GMS}.
Noether's problem for abelian groups was studied by Swan,
Voskresenskii, Endo, Miyata and Lenstra, etc. The reader is referred
to Swan's paper for a survey of this problem \cite{Sw}.

On the other hand, just a handful of results about Noether's problem
are obtained when the groups are not abelian. It is the case even
when $G$ is a $p$-group.

Before stating the results of Noether's problem for non-abelian
$p$-groups, we recall some relevant definitions.

\begin{defn} \label{d1.1}
Let $k\subset K$ be an extension of fields. $K$ is rational over
$k$ (for short, $k$-rational) if $K$ is purely transcendental over
$k$. $K$ is stably $k$-rational if $K(y_1,\ldots,y_m)$ is rational
over $k$ for some $y_1,\ldots,y_m$ such that $y_1,\ldots,y_m$ are
algebraically independent over $K$. When $k$ is an infinite field,
$K$ is said to be retract $k$-rational if there is a $k$-algebra
$A$ contained in $K$ such that (i) $K$ is the quotient field of
$A$, (ii) there exist a non-zero polynomial $f\in
k[X_1,\ldots,X_n]$ (where $k[X_1,\ldots,X_n]$ is the polynomial
ring) and $k$-algebra homomorphisms $\varphi\colon A\to
k[X_1,\ldots,X_n][1/f]$ and $\psi\colon k[X_1,\ldots,X_n][1/f]\to
A$ satisfying $\psi\circ\varphi =1_A$. (See \cite{Sa2,Ka} for
details.) It is not difficult to see that ``$k$-rational"
$\Rightarrow$ ``stably $k$-rational" $\Rightarrow$ ``retract
$k$-rational".
\end{defn}

\begin{defn} \label{d1.2}
Let $k\subset K$ be an extension of fields. The notion of the
unramified Brauer group of $K$ over $k$, denoted by
$\fn{Br}_{v,k}(K)$ was introduced by Saltman \cite{Sa3}. By
definition, $\fn{Br}_{v,k}(K)=\bigcap_R \fn{Image} \{ \fn{Br}(R)\to
\fn{Br}(K)\}$ where $\fn{Br}(R)\to \fn{Br}(K)$ is the natural map of
Brauer groups and $R$ runs over all the discrete valuation rings $R$
such that $k\subset R\subset K$ and $K$ is the quotient field of
$R$.
\end{defn}

\begin{lemma}[Saltman \cite{Sa3,Sa5}] \label{l1.3}
If $k$ is an infinite field and $K$ is retract $k$-rational, then
the natural map $\fn{Br}(k)\to \fn{Br}_{v,k} (K)$ is an isomorphism.
In particular, if $k$ is an algebraically closed field and $K$ is
retract $k$-rational, then $\fn{Br}_{v,k}(K)=0$.
\end{lemma}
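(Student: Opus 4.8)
The final statement to prove is Saltman's Lemma 1.3: \emph{if $k$ is an infinite field and $K$ is retract $k$-rational, then the natural map $\fn{Br}(k)\to \fn{Br}_{v,k}(K)$ is an isomorphism}, with the algebraically-closed case as a corollary. The plan is to exploit the retract-rationality data directly: by Definition~\ref{d1.1} there is a $k$-algebra $A$ with quotient field $K$, a nonzero $f\in k[X_1,\ldots,X_n]$, and $k$-algebra maps $\varphi\colon A\to k[X_1,\ldots,X_n][1/f]$ and $\psi\colon k[X_1,\ldots,X_n][1/f]\to A$ with $\psi\circ\varphi=1_A$. Writing $L=k(X_1,\ldots,X_n)$ for the quotient field of the polynomial ring, I would first observe that $\psi$ induces a $k$-place (a partially defined map) from $L$ to $K$ and that the unramified Brauer group is functorial for such specializations, so that the retraction identity $\psi\circ\varphi=1_A$ will transport classes back and forth while fixing $\fn{Br}_{v,k}(K)$.

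The first substantive step is to prove injectivity and the ``$\fn{Br}(k)\hookrightarrow\fn{Br}_{v,k}(K)$'' half, which is the easy direction: the structure map $k\to A\subset K$ gives $\fn{Br}(k)\to\fn{Br}(K)$, and since every Azumaya/central simple algebra over $k$ extends to every discrete valuation ring $R$ with $k\subset R\subset K$, its image lands in $\fn{Br}_{v,k}(K)$; injectivity follows because the generic point of a $k$-rational specialization (using that $K/k$ admits a $k$-place back to $k$, available once $k$ is infinite and $K$ is retract rational) splits nothing that was nonzero over $k$. Concretely I would use the standard fact that $\fn{Br}(k)\to\fn{Br}(k(t))$ is injective and iterate, combined with the retraction $\psi$ to pull an unramified class over $K$ back to $k$.

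The heart of the argument is surjectivity: given $\alpha\in\fn{Br}_{v,k}(K)$, I must show $\alpha$ comes from $\fn{Br}(k)$. The key is a result of Saltman that for the \emph{rational} field $L=k(X_1,\ldots,X_n)$ (and its localizations $k[X_1,\ldots,X_n][1/f]$) the unramified Brauer group over $k$ is exactly $\fn{Br}(k)$ — this is the Faddeev/Auslander--Brumer--Roquette computation of $\fn{Br}(k(X))$ via residues, showing that a class unramified at every divisorial valuation of a rational variety is constant. I would pull $\alpha$ through $\varphi\colon A\to k[X_1,\ldots,X_n][1/f]$: because $\varphi$ maps $A$ into the localized polynomial ring and $\alpha$ is unramified on $K$, its image $\varphi^*(\alpha)$ on $L$ is unramified at the relevant divisorial valuations, hence lies in $\fn{Br}(k)$; then applying $\psi^*$ and using $\psi^*\varphi^*=(\varphi\psi)^*$... more carefully, using $(\psi\circ\varphi)^*=\fn{id}$ on $\fn{Br}(K)$, recovers $\alpha$ as the image of that constant class in $\fn{Br}(k)$. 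The main obstacle — and the step demanding the most care — is verifying that $\varphi^*(\alpha)$ is genuinely unramified at \emph{all} discrete valuations of $L=k(X_1,\ldots,X_n)$ trivial on $k$, not merely at those induced from $R\subset K$; this requires checking the behavior at divisors where $f$ vanishes (the poles introduced by inverting $f$) and confirming that such ramification cannot appear, which is where the rationality computation of $\fn{Br}(k(X_1,\ldots,X_n))$ and the purity of the Brauer group on smooth rational varieties must be invoked. Finally, the algebraically closed case is immediate: when $k=\bar k$ one has $\fn{Br}(k)=0$, so the isomorphism forces $\fn{Br}_{v,k}(K)=0$.
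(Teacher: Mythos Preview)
The paper does not supply a proof of this lemma; it is stated with attribution to Saltman \cite{Sa3,Sa5} and used throughout as a black box, so there is no argument in the paper to compare your sketch against.

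For what it is worth, your outline is in the spirit of Saltman's original arguments: one exploits the retraction data of Definition~\ref{d1.1}, the functoriality of the unramified Brauer group under $k$-places/specializations, and the base computation $\fn{Br}_{v,k}(k(X_1,\ldots,X_n))=\fn{Br}(k)$ for a purely transcendental extension. The step you yourself flag as delicate---verifying that the transported class is unramified at \emph{every} discrete valuation of $k(X_1,\ldots,X_n)$ trivial on $k$, including those centered on the divisor $\{f=0\}$---is exactly the technical point that requires care, and your sketch does not actually carry it out. If you need a complete proof, consult \cite{Sa3,Sa5} directly; the present paper simply quotes the result.
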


\begin{theorem}[{Bogomolov, Saltman \cite[Theorem 12]{Bo,Sa4}}] \label{t1.4}
Let $G$ be a finite group, $k$ be an algebraically closed field with
$\gcd \{|G|,\fn{char}k\}=1$. Let $\mu$ denote the multiplicative
subgroup of all roots of unity in $k$. Then $\fn{Br}_{v,k}(k(G))$ is
isomorphic to the group $B_0(G)$ defined by
\[
B_0(G)=\bigcap_A \fn{Ker} \{\fn{res}_G^A: H^2(G,\mu)\to H^2(A,\mu)\}
\]
where $A$ runs over all the bicyclic subgroups of $G$ $($a group $A$
is called bicyclic if $A$ is either a cyclic group or a direct
product of two cyclic groups$)$.
\end{theorem}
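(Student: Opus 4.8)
The plan is to pass from the geometric unramified Brauer group on the left to the group-cohomological expression on the right through the Galois extension $L := k(V) = k(x_g : g\in G)$ over $K := k(V)^G = k(G)$, where $V = \bigoplus_{g} k\,x_g$ is the regular representation. Since the $G$-action on $V$ is faithful, $L/K$ is Galois with group $G$, so $\fn{Br}(L/K) \cong H^2(G,L^*)$; and since $V$ is affine space, $L$ is $k$-rational, whence Lemma~\ref{l1.3} gives $\fn{Br}_{v,k}(L) = \fn{Br}(k) = 0$. As the restriction of an unramified class along the finite extension $K \subseteq L$ is again unramified, every class of $\fn{Br}_{v,k}(K)$ dies in $\fn{Br}(L)$, hence lies in $\fn{Br}(L/K) = H^2(G,L^*)$. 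This reduces the problem to locating, inside $H^2(G,L^*)$, the classes with trivial residue at every discrete valuation of $K$.

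Second, I would compute $H^2(G,L^*)$ and isolate $H^2(G,\mu)$ (note $\mu$ is a trivial $G$-module since $G$ fixes $k$). From $\fn{Pic}(V)=0$ and $\c O(V)^* = k^*$ one obtains the exact sequence of $G$-modules $1 \to k^* \to L^* \to \fn{Div}(V) \to 1$, where $\fn{Div}(V) = \bigoplus_O \fn{Ind}_{G_D}^G \bm Z$ is a permutation module indexed by the $G$-orbits $O$ of prime divisors $D$ with stabilizers $G_D$. Because $k$ is algebraically closed, $k^* \cong \mu \oplus U$ with $U$ uniquely divisible and hence cohomologically trivial for the finite group $G$, so $H^i(G,k^*) \cong H^i(G,\mu)$ for $i\ge 1$. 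Using $H^1(G_D,\bm Z)=0$ and Shapiro's lemma, the long exact sequence collapses to $0 \to H^2(G,\mu) \to H^2(G,L^*) \to \bigoplus_O \fn{Hom}(G_D,\bm Q/\bm Z)$, where the last arrow is induced by $\fn{div}\colon L^* \to \fn{Div}(V)$ and so computes the ramification of a crossed-product class along the divisors of $V$. Matching this with the geometric residue maps on $K$ shows $\fn{Br}_{v,k}(K) \subseteq H^2(G,\mu)$: an unramified class must have trivial residues, so it lands in the copy of the Schur multiplier $H^2(G,\mu)$.

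Third — and this is where the real work lies — I would characterize which $\alpha \in H^2(G,\mu)$ are unramified. By purity on a smooth model it suffices to test the residues $\partial_v$ at divisorial valuations $v$ of $K$; lifting $v$ to a $G$-equivariant valuation $w$ on $L$ with inertia group $I_w$, one computes $\partial_v \alpha$ in terms of $\fn{res}^G_{I_w}\alpha$ together with the tame ramification data of $w$, the tameness being guaranteed by $\gcd\{|G|,\fn{char}\,k\}=1$. The calculation shows $\alpha$ survives $\partial_v$ exactly when its restriction to the relevant abelian inertia subgroup vanishes. The necessity direction then requires realizing every abelian subgroup $A\le G$ as such an inertia group — for instance through valuations supported on the image in $V/G$ of a fixed locus $V^A$ — so that $\fn{res}^G_A\alpha=0$ is forced; the sufficiency direction requires the complementary bookkeeping that no divisorial valuation imposes a condition not already implied by restriction to abelian subgroups. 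This two-sided matching of geometric ramification with cohomological restriction is the main obstacle, and is precisely the content of Bogomolov's and Saltman's analyses. It yields $\fn{Br}_{v,k}(K) = \bigcap_A \fn{Ker}\{\fn{res}^G_A : H^2(G,\mu)\to H^2(A,\mu)\}$ with $A$ ranging over all abelian subgroups.

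Finally, I would reduce ``all abelian subgroups'' to ``bicyclic subgroups.'' For a finite abelian $A \cong \prod_{i=1}^r \bm Z/n_i$ one has $H^2(A,\mu)\cong \textstyle\bigwedge^2 A \cong \bigoplus_{i<j}\bm Z/\gcd(n_i,n_j)$, and the restriction map $H^2(A,\mu)\to \bigoplus_B H^2(B,\mu)$ to the rank-$\le 2$ (bicyclic) subgroups $B$ is injective, since the $(i,j)$-summand is carried isomorphically onto $H^2(\langle e_i,e_j\rangle,\mu)$. Hence $\fn{res}^G_A\alpha = 0$ for every abelian $A$ if and only if $\fn{res}^G_B\alpha = 0$ for every bicyclic $B$, so the intersection over abelian subgroups equals the intersection over bicyclic subgroups. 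Since $K = k(G)$, this gives $\fn{Br}_{v,k}(k(G)) \cong B_0(G)$ as stated.
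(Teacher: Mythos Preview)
The paper does not prove Theorem~\ref{t1.4}; it is quoted from \cite{Bo,Sa4} as a background result, so there is no in-paper proof to compare against. Your outline is broadly the Bogomolov--Saltman strategy, and Steps~1, 2, and~4 are correct as written.

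Step~3, however, contains a genuine misstep in the direction you label ``necessity'' (unramified $\Rightarrow$ $\fn{res}^G_A\alpha=0$ for every abelian $A$). You propose to realize each abelian $A\le G$ as the inertia group of a divisorial valuation, via the fixed locus $V^A$. But in the tame situation $\gcd\{|G|,\fn{char}k\}=1$ the inertia group of any rank-one discrete valuation is \emph{cyclic}, so a non-cyclic $A$ can never occur this way; correspondingly, $V^A$ has codimension $>1$ for non-cyclic $A$ and its image in $V/G$ does not define a divisorial valuation. The actual argument for this direction is different and cleaner: for bicyclic $A$, the intermediate field $L^A=k(V)^A$ is $k$-rational by Fischer's theorem (Theorem~\ref{t3.3}), hence $\fn{Br}_{v,k}(L^A)=0$ by Lemma~\ref{l1.3}; restriction along the finite extension $K\subset L^A$ preserves unramifiedness, so $\alpha|_{L^A}=0$ in $\fn{Br}(L^A)$; and since $H^2(A,\mu)\hookrightarrow H^2(A,L^*)=\fn{Br}(L/L^A)\subset\fn{Br}(L^A)$ (the first injection because $\fn{Div}(V)$ is a permutation $A$-module with $H^1(A,\fn{Div}(V))=0$), this forces $\fn{res}^G_A\alpha=0$.

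The genuinely hard direction is the one you call ``sufficiency'' --- that a constant class $\alpha\in H^2(G,\mu)$ with $\fn{res}^G_A\alpha=0$ for all bicyclic $A$ is unramified at every divisorial valuation of $K$ --- and there your deferral to \cite{Bo,Sa4} is appropriate. It is in this direction that the bicyclic (not merely cyclic) condition is actually used, through the joint contribution of the cyclic inertia group and the decomposition group at a given valuation.
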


Note that $B_0(G)$ is a subgroup of $H^2(G,\mu)$ (where $\gcd
\{|G|,\fn{char}k\}=1$). Since $H^2(G,\mu) \simeq H_2(G)$, which is
the Schur multiplier of $G$ (see \cite{Kar}), we will call
$B_0(G)$ the Bogomolov multiplier of $G$, following the convention
in \cite{Ku}. Because of Theorem \ref{t1.4} we will not
distinguish $B_0(G)$ and $\fn{Br}_{v,k}(k(G))$ when $k$ is
algebraically closed and $\gcd \{|G|,\fn{char}k\}=1$. In this
situation, $B_0(G)$ is canonically isomorphic to $\bigcap_A
\fn{Ker}\{ \fn{res}_G^A$ $\colon H^2(G,\bm{Q}/\bm{Z}) \to
H^2(A,\bm{Q}/\bm{Z})\}$, i.e.\ we may replace the coefficient
$\mu$ by $\bm{Q}/\bm{Z}$ in Theorem \ref{t1.4}.

Using the unramified Brauer groups, Saltman and Bogomolov are able
to establish counter-examples to Noether's problem for non-abelian
$p$-groups.

\begin{theorem} \label{t1.5}
Let $p$ be any prime number, $k$ be any algebraically closed field
with $\fn{char}k\ne p$.

{\rm (1) (Saltman \cite{Sa3})} There is a group $G$ of order $p^9$
such that $B_0(G)\ne 0$. In particular, $k(G)$ is not retract
$k$-rational. Thus $k(G)$ is not $k$-rational.

{\rm (2) (Bogomolov \cite{Bo})} There is a group $G$ of order
$p^6$ such that $B_0(G)\ne 0$. Thus $k(G)$ is not $k$-rational.
\end{theorem}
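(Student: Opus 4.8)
The plan is to treat parts (1) and (2) by a single template, differing only in the explicit group produced. Granting a $p$-group $G$ with $B_0(G)\ne 0$, the rationality conclusions are immediate from the material above. Since $\fn{char}k\ne p$ and $|G|$ is a power of $p$ we have $\gcd\{|G|,\fn{char}k\}=1$, so Theorem \ref{t1.4} gives $\fn{Br}_{v,k}(k(G))\simeq B_0(G)\ne 0$. As $k$ is algebraically closed (hence infinite with $\fn{Br}(k)=0$), Lemma \ref{l1.3} forces $\fn{Br}_{v,k}(K)=0$ for every retract $k$-rational $K$; by contraposition $k(G)$ is not retract $k$-rational. Finally, the chain ``$k$-rational'' $\Rightarrow$ ``retract $k$-rational'' in Definition \ref{d1.1} shows $k(G)$ is not $k$-rational. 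Thus the entire content of the theorem is the construction of a group $G$, of order $p^9$ in (1) and $p^6$ in (2), with $B_0(G)\ne 0$.

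To locate such a $G$ I would first convert the defining condition of Theorem \ref{t1.4} into a statement about the Schur multiplier. Since $\mu$ is divisible, $H^2(G,\mu)\simeq\fn{Hom}(H_2(G),\mu)$, and $\fn{res}_G^A\colon H^2(G,\mu)\to H^2(A,\mu)$ is dual to the map $H_2(A)\to H_2(G)$ induced by $A\hookrightarrow G$. Hence a class lies in $B_0(G)$ precisely when the corresponding homomorphism $H_2(G)\to\mu$ annihilates
\[
M_0(G):=\sum_{A}\fn{Image}\{H_2(A)\to H_2(G)\},
\]
the sum being over bicyclic $A\le G$, so $B_0(G)\simeq\fn{Hom}(H_2(G)/M_0(G),\mu)$ and it suffices to produce $G$ with $H_2(G)\ne M_0(G)$. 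For $A=\langle a\rangle\times\langle b\rangle$ one has $H_2(A)=\wedge^2 A$ generated by $a\wedge b$, so $M_0(G)$ is exactly the subgroup generated by the ``commuting exterior products'' $a\wedge b$ with $[a,b]=1$. The goal is therefore to build $G$ whose Schur multiplier is not generated by classes coming from commuting pairs.

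The construction I would use is a $p$-group of nilpotency class $2$, presented as a central extension $1\to Z\to G\to V\to 1$ with $V,Z$ elementary abelian $\bm{F}_p$-spaces and $[G,G]=Z$; for odd $p$ one may take $G$ of exponent $p$, while for $p=2$ the exponents must be raised and the computation adjusted. The commutator gives a surjective alternating pairing $\lambda\colon\wedge^2 V\to Z$, and the standard computation of the Schur multiplier of a class-$2$ group identifies the relevant quotient as
\[
H_2(G)/M_0(G)\;\simeq\;\ker\lambda\big/\big\langle v\wedge w:\lambda(v\wedge w)=0\big\rangle,
\]
since $\lambda(v\wedge w)=0$ is exactly the condition $[v,w]=1$. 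Thus $B_0(G)\ne 0$ reduces to the linear-algebraic requirement that $\ker\lambda\subseteq\wedge^2 V$ be a subspace \emph{not} spanned by the decomposable (rank-one) bivectors lying on it, i.e.\ by the $\bm{F}_p$-points of the Pl\"ucker cone inside $\ker\lambda$. For part (2) I would take $\dim V=4$, $\dim Z=2$, so that $\ker\lambda$ is a $4$-dimensional subspace of $\wedge^2 V\simeq\bm{F}_p^6$ and choose $\lambda$ so that the restriction of the Pl\"ucker quadratic form to $\ker\lambda$ is \emph{anisotropic} of rank $2$; then its $\bm{F}_p$-null vectors fill out only the $2$-dimensional radical and cannot span $\ker\lambda$, giving $B_0(G)\ne 0$ with $|G|=p^6$. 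For part (1) the same mechanism with a larger $V$ (a less sharp but easier-to-verify choice of $\lambda$) produces $|G|=p^9$.

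The main obstacle is the lower bound: verifying that the candidate class genuinely survives, i.e.\ that $\ker\lambda$ is strictly larger than the span of its decomposable vectors. Exhibiting one element of $\ker\lambda$ is trivial; the difficulty is the \emph{simultaneous} condition over all commuting pairs (equivalently, all bicyclic subgroups), which is precisely what makes the intersection in Theorem \ref{t1.4} nonzero, and it is defeated by a generic $\lambda$ — only a special pairing whose Pl\"ucker restriction is degenerate and anisotropic works. Concretely one must pin down the incidence of the linear space $\ker\lambda$ with the Grassmannian of decomposables and check it cannot fill $\ker\lambda$; this is where the carefully chosen $\lambda$ enters and where the dimension count $p^6$ (resp.\ $p^9$) is optimized. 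The remaining bookkeeping — that $\lambda$ is alternating and surjective, that $G$ has the asserted order and class, and the precise Schur-multiplier computation in the residual case $p=2$ — is routine once $\lambda$ is fixed.
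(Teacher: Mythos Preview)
The paper does not prove Theorem \ref{t1.5}; it is quoted as a known background result due to Saltman \cite{Sa3} and Bogomolov \cite{Bo}, with no argument given in the paper itself. There is therefore no proof here against which to compare your proposal.

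That said, your outline is correct and is essentially the method of the cited references. The deduction of non-(retract-)rationality from $B_0(G)\ne 0$ via Theorem \ref{t1.4} and Lemma \ref{l1.3} is exactly right and is how the paper uses those results elsewhere. The construction you sketch --- a class-$2$ central extension $1\to Z\to G\to V\to 1$ of elementary abelian $p$-groups with commutator pairing $\lambda\colon\wedge^2V\twoheadrightarrow Z$, together with the criterion that $B_0(G)\ne 0$ whenever $\ker\lambda$ is not spanned by its decomposable (Pl\"ucker-isotropic) elements --- is precisely Bogomolov's approach in \cite{Bo}; the choice $\dim_{\bm{F}_p}V=4$, $\dim_{\bm{F}_p}Z=2$ yielding $|G|=p^6$ is his example, and Saltman's earlier $p^9$ group in \cite{Sa3} fits the same template with larger, less optimized parameters. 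The one place to be careful is your displayed identification of $H_2(G)/M_0(G)$ with $\ker\lambda$ modulo its decomposables: the map $H_2(G)\to\wedge^2V$ has kernel the image of the Ganea term $Z\otimes V$, and one must check this lands in $M_0(G)$ (it does, since every $z\in Z$ commutes with every $g\in G$, so such pairs generate bicyclic subgroups) and that $Z=[G,G]=Z(G)$ (which holds once $\lambda$ is chosen so that no nonzero $v\in V$ is $\lambda$-orthogonal to all of $V$). With those points attended to, the identification and hence the proposal go through.
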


For $p$-groups of small order, we have the following result.

\begin{theorem}[Chu and Kang \cite{CK}] \label{t1.6}
Let $p$ be any prime number, $G$ is a $p$-group of order $\le p^4$
and of exponent $e$. If $k$ is a field satisfying either {\rm (i)}
$\fn{char}k=p$, or {\rm (ii)} $k$ contains a primitive $e$-th root
of unity, then $k(G)$ is $k$-rational.
\end{theorem}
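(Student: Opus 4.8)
\emph{Proof strategy.} The two hypotheses are handled by completely different mechanisms, so I would first split on them. For case (i), where $\fn{char}k=p$, the order restriction is in fact unnecessary: this is Kuniyoshi's theorem. The point is that $G$ acts on $k[x_g:g\in G]$ through the regular representation, and over a field of characteristic $p$ every element of a $p$-group acts by a \emph{unipotent} matrix. Filtering $k[G]$ by powers of its augmentation ideal produces a $G$-stable flag $0=V_0\subset V_1\subset\cdots\subset V_{|G|}=\bigoplus_g kx_g$ with one-dimensional successive quotients on which $G$ acts trivially. Passing to the generic point gives a tower $k\subset k(z_1)\subset k(z_1,z_2)\subset\cdots$ in which each new generator satisfies $\sigma(z_{i+1})=z_{i+1}+(\text{element of }k(z_1,\dots,z_i))$ for all $\sigma\in G$; since additive Hilbert 90 gives $H^1(G,k(z_1,\dots,z_i)^{+})=0$, each step is rational with one new transcendental. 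Hence $k(G)$ is $k$-rational for every $p$-group in characteristic $p$, disposing of (i).

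For case (ii) I would first strip away the regular representation via the no-name lemma: if $V$ is any faithful $k[G]$-module and $W$ any $k[G]$-module, then $k(V\oplus W)^G$ is rational over $k(V)^G$, because $k(V)/k(V)^G$ is $G$-Galois and $H^1(G,\fn{GL}_n(k(V)))=1$ (Hilbert 90/Speiser) trivialises the $G$-action on $W\otimes_k k(V)$. Writing the regular module as $V_{\mathrm{reg}}=V_0\oplus W$ with $V_0$ a faithful summand yields $k(G)=k(V_0)^G(t_1,\dots,t_m)$, so it suffices to prove $k(V_0)^G$ is $k$-rational for one conveniently chosen faithful $V_0$. When $G$ is abelian I would take $V_0=\bigoplus_\chi k_\chi$, a sum of faithful one-dimensional characters; these are defined over $k$ because $k$ contains a primitive $e$-th root of unity, the action is diagonal, and $k(V_0)^G$ is rational by Fischer's theorem. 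This settles all groups of order $\le p^2$ and the abelian groups of order $p^3,p^4$.

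The substance is the finitely many non-abelian groups of order $p^3$ and $p^4$, which I would run through using R. James's classification. For each such $G$ I would pick a maximal normal abelian subgroup $A$ (of index $p$ in the cases at hand) and a character $\theta$ of $A$ whose $G$-orbit has trivial common kernel, so that $V_0=\fn{Ind}_A^G\theta$ is faithful of dimension $p$; this is a summand of $V_{\mathrm{reg}}=\fn{Ind}_1^G\mathbbm{1}$ since $\theta$ is a summand of the regular module of $A$. On a basis $x_0,\dots,x_{p-1}$ the subgroup $A$ acts diagonally by the conjugate characters (all $p$-power roots of unity lying in $k$), while a generator $\sigma$ of $G/A$ acts by a twisted cyclic shift; after rescaling the $x_i$ by suitable roots of unity in $\mu_e\subset k$ I may take $\sigma:x_i\mapsto x_{i+1}$. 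The crucial device is the discrete Fourier transform $y_j=\sum_i\zeta_p^{ij}x_i$, legitimate precisely because $\zeta_p\in k$: under it $\sigma$ becomes \emph{diagonal}, $\sigma(y_j)=\zeta_p^{-j}y_j$, at the cost of turning the $A$-action into a twisted permutation of the $y_j$. The whole monomial $G$-action can then be reduced one coordinate at a time, each generator acting on the next variable multiplicatively with coefficient in the field generated by the earlier ones, so multiplicative Hilbert 90 ($H^1(G,k(\cdots)^{\ast})=1$) absorbs the twist and peels off one rational transcendental per step, the terminal step being a purely diagonal abelian monomial action handled again by Fischer.

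The main obstacle is exactly this last reduction. The twisting scalars and the exponent pattern of the diagonal characters depend on the isoclinism type of $G$ — in particular the groups of exponent $p^2$ and the maximal-class groups of order $p^4$ demand their own coordinate changes, and one must check in each case that the residual monomial cocycle is a coboundary, i.e.\ that the action is conjugate to a diagonal one. There is no way around traversing the classification list group by group; the role of hypothesis (ii) is to furnish the roots of unity that make both the diagonalisation of $A$ and the Fourier linearisation of $\sigma$ possible, and it is the absence of these roots (together with the failure of Fischer's theorem without them) that blocks the argument over an arbitrary field.
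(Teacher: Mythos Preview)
The paper does not contain a proof of Theorem~\ref{t1.6}; it is quoted from \cite{CK} as background, so there is no in-paper argument to compare your proposal against. That said, your outline tracks the actual Chu--Kang strategy reasonably well: Kuniyoshi's theorem handles characteristic $p$ outright, the no-name lemma reduces the regular representation to a single faithful summand, Fischer disposes of abelian groups, and the non-abelian groups of order $p^3$ and $p^4$ are handled case by case via induced representations from a normal abelian subgroup of index $p$.

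Two points deserve correction. First, the classification you need is not R.~James's---his paper \cite{Ja} treats orders $p^5$ and $p^6$; groups of order $p^3$ and $p^4$ were classified classically (H\"older, Burnside), and the present paper uses James only for order $p^5$. Second, the Fourier step is less automatic than you indicate. After the transform $y_j=\sum_i\zeta_p^{ij}x_i$, the cyclic generator $\sigma$ indeed becomes diagonal, but $A$ acted on the $x_i$ by characters $\chi_i={}^{\sigma^{-i}}\chi_0$, and under the transform $A$ acts on the $y_j$ by a \emph{circulant} of the values $\chi_i(a)$, which is a twisted permutation only when these characters lie in a single coset of a subgroup of $\widehat{A}$ arranged compatibly---a condition that fails for several of the $p^4$ types. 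Likewise, ``peeling off one rational transcendental via multiplicative Hilbert~90'' requires the residual action on the new variable to be affine (as in Theorem~\ref{t3.1} of the present paper), whereas a genuinely monomial action $\sigma(z)=c_\sigma\cdot(\text{monomial})$ does not fit that template. In \cite{CK} the non-abelian cases are dispatched by explicit coordinate changes tailored to each group, not by a uniform Fourier device; your sketch correctly names the obstacle but understates how much of the proof is irreducibly case-specific computation.
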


Because of the above Theorems \ref{t1.5} and \ref{t1.6}, we may
wonder what happens to non-abelian $p$-groups of order $p^5$.

\begin{theorem}[Chu, Hu, Kang and Prokhorov \cite{CHKP}] \label{t1.7}
Let $G$ be a group of order $32$ and of exponent $e$. If $k$ is a
field satisfying either {\rm (i)} $\fn{char}k=2$, or {\rm (ii)} $k$
contains a primitive $e$-th root of unity, then $k(G)$ is
$k$-rational. In particular, $B_0(G)=0$.
\end{theorem}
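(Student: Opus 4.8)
The plan is to establish the rationality of $k(G)$ by going through the $51$ isomorphism classes of groups of order $32$ one at a time, after reducing the $32$-dimensional regular action to a faithful action of small dimension. First observe that the case $\fn{char}k=2$ is not peculiar to order $32$: it is Kuniyoshi's theorem that every $p$-group satisfies Noether's problem over a field of characteristic $p$. So assume henceforth that $k$ contains a primitive $e$-th root of unity (in particular $\fn{char}k\ne 2$, so $k[G]$ is semisimple). The $7$ abelian groups are then covered by Fischer's theorem, and the whole content is the $44$ non-abelian groups.

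For such a $G$, write $k(G)=k(V)^G$ with $V$ the regular representation, and reduce the dimension as follows. Pick a maximal normal abelian subgroup $A\triangleleft G$; it is self-centralizing, and since $\zeta_e\in k$ one can usually choose a linear character $\lambda$ of $A$ whose $G$-core is trivial, so that $W:=\fn{Ind}_A^G\lambda$ is a faithful monomial $k$-representation of dimension $[G:A]\le 8$ (for the few groups where no single such $\lambda$ works, take $W$ to be a direct sum of induced representations from linear characters). As $\zeta_e\in k$ makes every linear character of $A$ a direct summand of $k[A]$, the module $W$ is a direct summand of $V$, so by the no-name lemma $k(V)^G$ is rational over $k(W)^G$, and it remains to prove $k(W)^G$ is $k$-rational. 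Now $A$, being normal, acts diagonally by roots of unity on the monomial coordinates $x_1,\dots,x_m$ of $W$; the lattice of $A$-invariant monomials is free of rank $m$, so $k(W)^A=k(y_1,\dots,y_m)$ is rational and $\bar G:=G/A$ acts on it by monomial $k$-automorphisms with root-of-unity coefficients. One then lowers the number of variables step by step --- split off a central subgroup $Z_0\cong C_2$ of $\bar G$, diagonalizable because $-1\in k$, and pass to its fixed field, which is again rational and carries a monomial action of $\bar G/Z_0$ --- until the low-dimensional cases that survive are settled by the classical rationality theorems for monomial actions of small $2$-groups on $k(x)$, $k(x,y)$ and $k(x,y,z)$ (Hajja, Hajja--Kang) together with Kang's theorem that metacyclic $p$-groups satisfy Noether's problem. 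Working down the list, every one of the $44$ non-abelian groups is matched to one of these patterns.

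The step I expect to be the main obstacle is that a fair number of the $44$ non-abelian groups --- the two extraspecial groups of order $32$ and the various central products of $C_4$ or $D_8$ with smaller $2$-groups --- are neither metacyclic nor possess an abelian subgroup of index $2$, so they escape the cleanest reductions and force explicit computation: one must choose generators for $k(W)^A$, write out the monomial action of $\bar G$ on them, and exhibit an explicit transcendence basis of the fixed field by hand. The genuinely delicate point is the bookkeeping: at each reduction one has to check that the monomial action produced is among the known $k$-rational ones rather than one carrying an essentially non-trivial $2$-cocycle with values in the multiplicative group --- over a field lacking $\zeta_e$ this could obstruct rationality --- which is exactly the place where hypothesis (ii) is used. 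Finally, the closing assertion $B_0(G)=0$ is formal: the field $k=\bm{C}$ satisfies hypothesis (ii), so $\bm{C}(G)$ is rational, hence retract $\bm{C}$-rational, hence $\fn{Br}_{v,\bm{C}}(\bm{C}(G))=0$ by Lemma~\ref{l1.3}, and this group is $B_0(G)$ by Theorem~\ref{t1.4}.
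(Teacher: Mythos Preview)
The paper does not contain a proof of Theorem~\ref{t1.7}: this result is quoted from \cite{CHKP} and is used here only as background (to explain why the case $|G|=32$ is settled and why the focus shifts to odd primes). There is therefore nothing in the present paper to compare your proposal against.

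That said, your outline is a faithful sketch of the strategy actually carried out in \cite{CHKP}: dispose of $\fn{char}k=2$ by Kuniyoshi, of the abelian groups by Fischer, and for each of the remaining non-abelian groups reduce $k(G)$ via the no-name lemma to a low-dimensional faithful monomial action, then finish with the known rationality results for small monomial actions (Hajja, Hajja--Kang) and for metacyclic $p$-groups. Your identification of the extraspecial groups and the central products as the cases requiring the most explicit work is also accurate. The only caveat is that what you have written is a plan, not a proof: the substance of \cite{CHKP} lies precisely in the forty-odd individual computations you allude to, and a referee would not accept the sketch as a replacement for them.

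Your deduction of $B_0(G)=0$ at the end is correct and is exactly how the paper would justify the ``In particular'' clause: take $k=\bm{C}$, apply the rationality just established, then invoke Lemma~\ref{l1.3} and Theorem~\ref{t1.4}.
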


Working on $p$-groups, Bogomolov developed a lot of techniques and
interesting results. Here is one of his results.

\begin{theorem} \label{t1.8}
{\rm (1) \cite[Lemma 4.11]{Bo}} If $G$ is a $p$-group with
$B_0(G)\ne 0$ and $G/[G,G]\simeq C_p\times C_p$, then $p\ge 5$ and
$|G|>p^7$.

{\rm (2) \cite[Lemma 5.6; BMP, Corollary 2.11]{Bo}} If $G$ is a
$p$-group of order $\le p^5$, then $B_0(G)=0$.
\end{theorem}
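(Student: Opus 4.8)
\emph{Proof proposal.} The common engine is a dual reformulation of Theorem~\ref{t1.4}. Since $G$ is finite and $\bm{Q}/\bm{Z}$ is divisible, the universal coefficient theorem gives $H^2(G,\bm{Q}/\bm{Z})\cong\fn{Hom}(H_2(G),\bm{Q}/\bm{Z})$, and under this identification $\fn{res}^A_G$ is the transpose of the map $H_2(A)\to H_2(G)$ induced by inclusion. Writing $M(G)=H_2(G)$ for the Schur multiplier and $M_0(G)=\sum_A\fn{Im}\{H_2(A)\to H_2(G)\}$, the sum over bicyclic $A$, annihilator duality turns the intersection of kernels in Theorem~\ref{t1.4} into
\[
B_0(G)\cong\bigl(M(G)/M_0(G)\bigr)^{\vee}.
\]
Because $H_2$ of a cyclic group vanishes and $H_2(C_m\times C_n)\cong C_{\gcd(m,n)}$ is generated by $\bar a\wedge\bar b$, the subgroup $M_0(G)$ is exactly the ``toral'' part of the multiplier generated by the $\bar x\wedge\bar y$ with $[x,y]=1$. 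Thus the whole theorem reduces to the single assertion that $M(G)=M_0(G)$, i.e.\ that the multiplier is generated by commuting pairs. I would also record that the right-hand side depends only on $G/Z(G)$ together with the commutator pairing, so $B_0$ is an isoclinism invariant; in part~(2) this collapses the many isomorphism types of order $p^5$ to James's handful of families $\Phi_k$.

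For part~(2) I would split on $[G,G]$. If $G$ is abelian every pair commutes, so $M_0(G)=M(G)$ and $B_0(G)=0$. If $|G|\le p^4$, or $|G|=2^5$, then $k(G)$ is already known to be rational (Theorems~\ref{t1.6} and~\ref{t1.7}), hence retract rational, so $B_0(G)=0$ by Lemma~\ref{l1.3} and Theorem~\ref{t1.4}; only the nonabelian groups of order $p^5$ with $p$ odd survive. When $[G,G]$ is cyclic one verifies directly that every multiplier class is toral, which disposes of all cases with $|[G,G]|=p$ and the cyclic cases of $|[G,G]|=p^2,p^3$. A nonabelian $G$ of order $p^5$ with $|G^{ab}|=p^2$ must have $G^{ab}\cong C_p\times C_p$, and this two-generated case is precisely the content of part~(1). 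What then remains are the groups with $[G,G]\cong C_p\times C_p$ and $|G^{ab}|=p^3$ (so $G^{ab}\cong C_{p^2}\times C_p$ or $C_p^3$), of nilpotency class $2$ or $3$; by isoclinism these are only finitely many of James's families, and for each I would attempt an explicit commutator computation showing that the independent commutators already lie in $M_0(G)$.

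Part~(1) is the structural heart. Here $G=\langle a,b\rangle$ with $G/[G,G]\cong C_p\times C_p$. Writing $G=F/R$ with $F$ free of rank $2$, the Hopf formula gives $M(G)=(R\cap[F,F])/[F,R]$, and I would study $M(G)/M_0(G)$ through the mod-$p$ lower central (restricted Lie) structure of $F$: a surviving non-toral class forces the first bracket relation that is not a commuting product to appear only in a high graded layer. Quantifying this depth produces the two bounds. The iterated commutators of just two generators do not accumulate enough length to support a non-toral class below $|G|=p^7$, yielding $|G|>p^7$; and for $p=2,3$ the interaction of the $p$-th power map with the bracket in low class forces every candidate class back into the toral subgroup, yielding $p\ge 5$.

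I expect the main obstacle to be exactly the last case of part~(2): the order-$p^5$ groups with $[G,G]\cong C_p\times C_p$ and nilpotency class $3$. These fall outside part~(1) (their abelianization is not $C_p\times C_p$) and outside the cyclic-commutator criterion, so one must settle them by a hands-on verification that the two independent commutators are toral, and the interplay of the bracket with $p$-th powers in class $3$ is where such a verification is most fragile. This boundary—commutator subgroup $C_p\times C_p$, order $p^5$, class $3$—is precisely the region that the paper's main theorem scrutinizes most closely, and it is the step on which the whole vanishing claim turns.
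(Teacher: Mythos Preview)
There is a fundamental problem: Theorem~\ref{t1.8} is \emph{false}, and the paper does not prove it but rather disproves it. Immediately after stating Theorem~\ref{t1.8} the paper records that Moravec's computations (Theorem~\ref{t1.9}) contradict it, and the main result of the paper (Theorem~\ref{t1.11}, together with Theorem~\ref{t2.3} and the Remark following it) produces, for every odd prime $p$, groups $G$ of order $p^5$ in the isoclinism family $\Phi_{10}$ with $G/[G,G]\simeq C_p\times C_p$ and $B_0(G)\neq 0$. These groups violate part~(2) and, since $p$ may be $3$ and $|G|=p^5\le p^7$, they violate part~(1) as well. So no proof of the stated theorem can be correct, and your proposal necessarily contains a genuine gap.

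Where your argument actually breaks is in part~(1), precisely at the step you left most schematic: the assertion that, for a two-generator $p$-group, a ``non-toral'' multiplier class forces the order to exceed $p^7$ (and $p\ge 5$). The $\Phi_{10}$ groups show this is simply wrong. Your reduction in part~(2) of the case $G^{ab}\simeq C_p\times C_p$ to part~(1) is logically sound, but since part~(1) fails, that case is not handled---and that is exactly where the counterexamples sit ($|[G,G]|=p^3$, not $C_p\times C_p$). So the ``main obstacle'' you anticipated is misplaced: the danger lies not in the families with $[G,G]\simeq C_p\times C_p$ but in the two-generator family you thought was already covered. Incidentally, your claim that $B_0$ is an isoclinism invariant is correct in the cases at hand (and is one of the themes of the paper, cf.\ Question~\ref{quest-iso}), but it cannot rescue the argument because the entire family $\Phi_{10}$ has nonvanishing $B_0$.
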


Because of part (2) of the above theorem, Bogomolov proposed to
classify all the groups $G$ with $|G|=p^6$ satisfying $B_0(G)\ne 0$
\cite[page 479]{Bo}.

It came as a surprise that Moravec's recent paper \cite{Mo}
disproved the above Theorem \ref{t1.8}.

\begin{theorem}[{Moravec \cite[Section 5]{Mo}}] \label{t1.9}
If $G$ is a group of order $243$, then $B_0(G)\ne 0$ if and only if
$G=G(243,i)$ with $28\le i\le 30$, where $G(243,i)$ is the $i$-th
group among groups of order $243$ in the database of GAP.
\end{theorem}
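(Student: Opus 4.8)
The plan is to strip the ground field out of the problem, reduce it via isoclinism to a finite group-theoretic check, and then run through R. James's ten isoclinism families of groups of order $p^5$, of which only $\Phi_{10}$ will survive. \emph{First, a purely group-theoretic model for $B_0$.} Since here $\mu\cong\bm{Q}/\bm{Z}$ and $H^2(G,\bm{Q}/\bm{Z})\cong\fn{Hom}(H_2(G,\bm{Z}),\bm{Q}/\bm{Z})$ is the Pontryagin dual of the Schur multiplier $M(G)=H_2(G,\bm{Z})$, and since each restriction map in Theorem \ref{t1.4} is dual to the map $M(A)\to M(G)$ induced by $A\hookrightarrow G$, one rewrites
\[
B_0(G)\;\cong\;\bigl(M(G)/M_0(G)\bigr)^\vee,\qquad M_0(G):=\sum_{A}\fn{Image}\{M(A)\to M(G)\},
\]
with $A$ running over the bicyclic subgroups of $G$. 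Using the central extension $1\to M(G)\to G\wedge G\to[G,G]\to 1$ furnished by the nonabelian exterior square, together with the observation that commuting elements $g,h$ generate an abelian $2$-generated (hence bicyclic) subgroup, one identifies $M_0(G)$ with the subgroup $\langle g\wedge h:g,h\in G,\ [g,h]=1\rangle$ of $M(G)\subseteq G\wedge G$. Thus $B_0(G)=0$ precisely when $M(G)$ is generated by exterior products of commuting pairs, a condition visible in any presentation of $G$ and with no mention of $k$.

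\emph{Reduction by isoclinism.} I would then invoke the theorem of Moravec \cite{Mo} that $M(G)/M_0(G)$, hence $B_0(G)$, is an invariant of the isoclinism class of $G$. By James's classification every group of order $p^5$ ($p$ odd) lies in one of $\Phi_1,\dots,\Phi_{10}$, and $\Phi_{10}$ is represented by a group with $G/[G,G]\cong C_p\times C_p$ and nilpotency class $3$, the model case being the free nilpotent group of class $3$ and rank $2$, of order $p^{2+1+2}=p^5$, its remaining members being the exponent-$p^2$ variants. For $p=3$ this sorts the $67$ groups of order $243$ into families, and matching James's structural invariants (or a short GAP computation) shows that $\Phi_{10}$ consists exactly of $G(243,28),G(243,29),G(243,30)$, while every other $G(243,i)$ lies in $\Phi_1,\dots,\Phi_9$. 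So Theorem \ref{t1.9} reduces to showing $B_0=0$ on one representative of each of $\Phi_1,\dots,\Phi_9$, and $B_0\ne 0$ on a representative of $\Phi_{10}$.

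\emph{The ten representatives.} For $\Phi_1$ (abelian $G$) the vanishing $B_0=0$ is classical. For $\Phi_2,\dots,\Phi_9$ I would take James's presentation for the family, compute $M(G)$ (these multipliers are tabulated for all groups of order $p^5$), exhibit enough commuting pairs, and verify that their exterior products already span $M(G)$, so that $M_0(G)=M(G)$ and $B_0(G)=0$; for the class-$\le 2$ families this is immediate, and for the class-$3$ and class-$4$ families it is a short linear-algebra computation in $G\wedge G$ modulo $\gamma_4(G)$. For $\Phi_{10}$ one shows instead that $M(G)/M_0(G)\cong C_p$: in the class-$3$, rank-$2$ representative every maximal abelian subgroup has order $p^3$ (one checks $G$ has no abelian subgroup of index $p$, since the centralizer of an element outside $[G,G]$ has order $p^3$), and the exterior squares of these subgroups miss $M(G)$ by exactly one dimension, the surviving class being the "mixed" commutator relation that Bogomolov's Lemma 4.11 wrongly set to zero. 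Hence $B_0(G)\cong C_p\ne 0$ on $\Phi_{10}$, which completes the proof.

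The delicate point is this last step for the genuinely non-abelian families, and above all for $\Phi_{10}$: one must determine $M(G)$ and its subgroup $M_0(G)$ sharply enough to distinguish the quotient $C_p$ from $0$. The isoclinism reduction is exactly what makes this feasible, turning the problem into ten explicit computations rather than sixty-seven, and (for arbitrary odd $p$) into ten families rather than an unbounded list.
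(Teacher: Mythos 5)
Your framework---Moravec's identification of $B_0(G)$ with the dual of $M(G)/M_0(G)$, where $M_0(G)=\langle g\wedge h:[g,h]=1\rangle$ inside the nonabelian exterior square---is sound, and it is indeed the formalism in which Moravec's own (computer) verification of this statement is carried out. But your argument has a genuine gap at its load-bearing step: the reduction to one representative per isoclinism family. The isoclinism invariance of $B_0$ is \emph{not} in the cited paper of Moravec; in the present paper it appears verbatim as the open Question \ref{quest-iso}, and for groups of order $p^5$ it is a \emph{consequence} of this paper's results rather than an available input. Invoking it is therefore circular relative to what can actually be cited here. Without it you must either prove the invariance (a substantial theorem in its own right) or treat all $67$ groups of order $243$ individually --- which is exactly the computer calculation you were trying to replace.

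The second gap is that the decisive computations are asserted, not performed: for $\Phi_2,\dots,\Phi_9$ you must exhibit $M(G)$ and enough commuting pairs to show $M_0(G)=M(G)$, and for $\Phi_{10}$ you must show the quotient is exactly $C_p$, but no multipliers, generators, or index counts are given. These are not uniformly ``short linear-algebra computations'': the paper needs the seven-term exact sequence of Dekimpe--Hartl--Wauters to kill $B_0$ on $\Phi_6$, and an explicit comparison of induced representations to handle $G(3^5,59)$ inside $\Phi_7$; the nonvanishing on $\Phi_{10}$ is extracted from the five-term Hochschild--Serre sequence for $N=\langle f_4,f_5\rangle$ together with a careful check that every bicyclic subgroup maps into a cyclic subgroup of $G/N$ (Lemma \ref{l2.2}). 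Your sketch for $\Phi_{10}$ (maximal abelian subgroups of order $p^3$, exterior squares missing $M(G)$ by one dimension) is plausible but sits exactly where Bogomolov's Lemma 4.11 went wrong, so it cannot be waved through; note also that for $p=3$ the groups in $\Phi_{10}$ have exponent $9$, so the ``free class-$3$ exponent-$p$'' model you describe does not literally exist there. For comparison, the paper obtains the statement as the $p=3$ case of Theorem \ref{t1.11}, avoiding isoclinism invariance entirely by handling \emph{every} group in each family through uniform structural arguments (abelian normal subgroups with cyclic quotient, semidirect products with bicyclic complement, and explicit isomorphisms between the fields $k(G)$ within a family).
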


Moravec proves Theorem \ref{t1.9} by using computer calculations.
No theoretic proof is given. A file of the GAP functions and
commands for computing $B_0(G)$ can be found at Moravec's website
\verb"www.fmf.uni-1j.si/~moravec/b0g.g". Recently, using this
computer package, Moravec was able to classify all groups $G$ of
order $5^5$ and $7^5$ such that $B_0(G)\ne 0$.

Before stating the main result of this paper, we recall the
classification of $p$-groups of order $\le p^6$ and introduce the
notion of isoclinism.

A list of groups of order $2^5$ (resp.\ $3^5$, $5^5$, $7^5$) can be
found in the database of GAP. However the classification of groups
of order $p^5$ dated back to Bagnera (1898), Bender (1927), R. James
(1980), etc.\ \cite{Ba,Be,Ja}, although some minor errors might
occur in the classification results finished before the
computer-aided time. For example, in Bender's classification of
groups of order $3^5$, one group is missing, i.e.\ the group
$\Delta_{10} (2111) a_2$ which was pointed by \cite[page 613]{Ja}. A
beautiful formula for the total number of the groups of order $p^5$,
for $p\ge 3$, was found by Bagnera \cite{Ba} as
\[
2p+61+\gcd \{4,p-1\}+2\gcd\{3,p-1\}.
\]

Note that the above formula is correct only when $p \ge 5$ (see
the second paragraph of Section 4).

On the other hand, groups of order $2^n$ ($n\le 6$) were
classified by M.\ Hall and Senior \cite{HaS}. There are 267 groups
of order $2^6$ in total. Groups of order $2^7$ were classified by
R.\ James, Newman and O'Brien \cite{JNOB}.

\begin{defn} \label{d1.10}
Two $p$-groups $G_1$ and $G_2$ are called isoclinic if there exist
group isomorphisms $\theta\colon G_1/Z(G_1) \to G_2/Z(G_2)$ and
$\phi\colon [G_1,G_1]\to [G_2,G_2]$ such that $\phi([g,h])$
$=[g',h']$ for any $g,h\in G_1$ with $g'\in \theta(gZ(G_1))$, $h'\in
\theta(hZ(G_1))$ (note that $Z(G)$ and $[G,G]$ denote the center and
the commutator subgroup of the group $G$ respectively).

For a prime number $p$ and a fixed integer $n$, let $G_n(p)$ be the
set of all non-isomorphic groups of order $p^n$. In $G_n(p)$
consider an equivalence relation: two groups $G_1$ and $G_2$ are
equivalent if and only if they are isoclinic. Each equivalence class
of $G_n(p)$ is called an isoclinism family.
\end{defn}


\begin{question} \label{quest-iso}
Let $G_1$ and $G_2$ be isoclinic $p$-groups. Is it true that the
fields $k(G_1)$ and $k(G_2)$ are stably isomorphic, or, at least,
that $B_0(G_1)$ is isomorphic to $B_0(G_2)$?
\end{question}

According to a private communication from Bogomolov, one should
expect an affirmative answer even within larger classes of groups.
Our results for groups of order $p^5$ confirm these expectations.


If $p$ is an odd prime number, then there are precisely 10
isoclinism families for groups of order $p^5$; each family is
denoted by $\Phi_i$, $1\le i\le 10$ \cite[pages 619--621]{Ja}. As
for groups of order 64, there are 27 isoclinism families 
\cite[page~147]{JNOB}.

The main result of the present paper is the following theorem.

\begin{theorem} \label{t1.11}
Let $p$ be any odd prime number, $G$ be a group of order $p^5$. Then
$B_0(G)\ne 0$ if and only if $G$ belongs to the isoclinism family
$\Phi_{10}$. Each group $G$ in the family $\Phi_{10}$ satisfies the
condition $G/[G,G] \simeq C_p\times C_p$. There are precisely $3$
groups in this family if $p=3$. For $p\ge 5$, the total number of
non-isomorphic groups in this family is
\[
1+\gcd\{4,p-1\}+\gcd \{3,p-1\}.
\]
\end{theorem}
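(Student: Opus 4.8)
The plan is to reduce the theorem to a finite, family-by-family verification, exploiting isoclinism to collapse the computation to one representative per isoclinism family and then using Bogomolov's formula from Theorem~\ref{t1.4} on each representative. First I would record a lemma answering Question~\ref{quest-iso} affirmatively for the groups at hand: if $G_1$ and $G_2$ are isoclinic $p$-groups, then $B_0(G_1)\simeq B_0(G_2)$. The key point is that $B_0(G)$ depends only on the ``commutator structure'' of $G$; concretely, $B_0(G)$ can be identified with the subgroup of $H^2(G,\mathbf{Q}/\mathbf{Z})$ that restricts trivially to all bicyclic subgroups, and one shows via the Hochschild--Serre machinery (or via the description of $H_2$ and the Bogomolov functor $B_0$ in terms of the exterior square modulo the ``commutator relations'') that this group is invariant under isoclinism. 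This is essentially in Bogomolov's and Moravec's work, so I would cite it and give only the short argument. With this in hand, it suffices to check one group from each of the ten families $\Phi_1,\dots,\Phi_{10}$ in R.~James's list.

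Next I would dispose of the nine families $\Phi_1,\dots,\Phi_9$ by showing $B_0(G)=0$ for a representative of each. For the families in which $G/[G,G]$ is not isomorphic to $C_p\times C_p$ (equivalently, $G$ can be generated by a single element modulo $[G,G]$, so $G$ is cyclic-by-something, or $d(G)\ge 3$ with the abelianization not of that shape), I would invoke Theorem~\ref{t1.8}(1): Bogomolov's Lemma~4.11 says $B_0(G)\ne 0$ with $G/[G,G]\simeq C_p\times C_p$ forces $|G|>p^7$, and more elementary observations (e.g.\ if $G$ is metacyclic or if $G/[G,G]$ has a cyclic direct factor large enough that $H^2(G,\mathbf{Q}/\mathbf{Z})$ is detected on an abelian — hence bicyclic — subgroup) kill the remaining small cases. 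For the families where $G/[G,G]\simeq C_p\times C_p$ but $G\notin\Phi_{10}$ (these will be $\Phi_6,\Phi_7$ and possibly one or two others, according to James's table), I would compute $H^2(G,\mathbf{Q}/\mathbf{Z})$ and its restrictions to the bicyclic subgroups directly using an explicit power-commutator presentation: write down generators and relations for the representative, compute the Schur multiplier $H_2(G)$ from the presentation (via Hopf's formula $H_2(G)=(R\cap[F,F])/[F,R]$), list all maximal bicyclic subgroups, and verify that at least one nonzero class in $H_2(G)$ survives, forcing the intersection defining $B_0(G)$ to be zero — or, more efficiently, observe that in these families $H_2(G)$ is itself small enough (often cyclic of order $p$ generated by a class supported on an abelian subgroup) that $B_0(G)=0$ is immediate.

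Then I would treat $\Phi_{10}$, the heart of the matter. Using James's presentation of the groups in $\Phi_{10}$ — these have $|G|=p^5$, $|[G,G]|=p^2$, $G/[G,G]\simeq C_p\times C_p$, and $[G,G]=Z(G)$ has order $p^2$ — I would compute $H_2(G)$ via Hopf's formula and identify the ``shell'' $B_0(G)$ as follows: every bicyclic subgroup $A$ of $G$ is abelian (since $G$ is generated by two elements modulo its center, a bicyclic subgroup either lies in a proper subgroup or is central-by-cyclic), and one can enumerate the conjugacy classes of maximal abelian subgroups; then a generator of $B_0(G)$ is exhibited as a specific element of $H^2(G,\mathbf{Q}/\mathbf{Z})\simeq \wedge^2(G/[G,G])\oplus(\text{a piece})$ — more precisely, using Bogomolov's description $B_0(G)=\ker\big(H_2(G)\to \prod_{A}H_2(A)\big)$ dualized, I would show the relevant ``curl'' map has a nontrivial kernel precisely because the two independent commutator relations in $\Phi_{10}$ cannot both be realized on a single bicyclic subgroup. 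The main obstacle, and where I expect to spend most of the effort, is exactly this last computation for $\Phi_{10}$: one must handle the several non-isomorphic groups in the family (the count $1+\gcd\{4,p-1\}+\gcd\{3,p-1\}$ reflects twisting by roots of unity in the presentation) and show the answer $B_0\ne 0$ is uniform across them — which is where the isoclinism lemma pays off, letting me do the cohomology for one carefully chosen representative and transport the conclusion. Finally, the group-count assertions ($3$ groups when $p=3$, and $1+\gcd\{4,p-1\}+\gcd\{3,p-1\}$ when $p\ge5$) I would extract directly from James's classification tables, with the $p=3$ case adjusted for the well-known correction (the group missing from Bender's list) noted in the introduction.
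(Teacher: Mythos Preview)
Your proposal has two genuine gaps.

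First, you invoke Theorem~\ref{t1.8}(1) (Bogomolov's Lemma~4.11) to kill families with $G/[G,G]\simeq C_p\times C_p$. But the paper explicitly records that this lemma is \emph{false}: indeed, the whole point of Theorem~\ref{t1.11} is that the groups in $\Phi_{10}$ have $G/[G,G]\simeq C_p\times C_p$, order $p^5<p^7$, and yet $B_0(G)\ne 0$. The Remark following Theorem~\ref{t2.3} says so outright. So you cannot use Theorem~\ref{t1.8}(1) anywhere; any family you hoped to dispose of this way needs an independent argument.

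Second, your reduction to one representative per family rests on the isoclinism invariance of $B_0$, which you propose to ``cite and give only the short argument'' for. In this paper that statement is posed as the open Question~\ref{quest-iso}, not assumed; the authors confirm it for order $p^5$ only \emph{a posteriori}, by computing $B_0$ for every group. If you want to use isoclinism invariance as an input, you must actually prove it (Moravec later did, via the commutator description of $B_0$), not wave at it.

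Even setting these aside, your route differs substantially from the paper's. For $\Phi_1$--$\Phi_4$, $\Phi_8$, $\Phi_9$ the paper does not compute cohomology at all: it exhibits an abelian normal subgroup with cyclic quotient and concludes $k(G)$ is retract $k$-rational, whence $B_0(G)=0$ by Lemma~\ref{l1.3}. For $\Phi_5$ and $\Phi_7$ it writes $G=A\rtimes G_0$ with $A$ abelian and $G_0$ bicyclic and applies Barge's theorem (Theorem~\ref{t3.11}) via Theorem~\ref{t4.2}. For $\Phi_6$ it proves that the fields $k(G)$ are all $k$-isomorphic across the family (via explicit monomial representations) and then shows $B_0(\Phi_6(221)a)=0$ using the $7$-term exact sequence of Theorem~\ref{t5.4}. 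For $\Phi_{10}$ the paper does not use Hopf's formula; it applies Lemma~\ref{l2.1} to the normal subgroup $N=\langle f_4,f_5\rangle$, checking that the transgression $H^1(N,\bm{Q}/\bm{Z})^G\to H^2(G/N,\bm{Q}/\bm{Z})$ is not surjective and that every bicyclic $A\le G$ has cyclic image in $G/N$, so that $0\ne\fn{Image}(\psi)\subset B_0(G)$. Your Hopf-formula-and-enumerate-bicyclics plan could in principle be made to work, but it is a different computation and you should not expect the paper's lemmas to supply the pieces.
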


Note that, for $p=3$, the isoclinism family $\Phi_{10}$ consists of
the groups $\Phi_{10}(2111) a_r$ (where $r=0,1$) and $\Phi_{10}(5)$
\cite[page 621]{Ja}, which are just the groups $G(3^5,i)$ with
$28\le i\le 30$ in the GAP code numbers. This confirms the
computation of Moravec \cite{Mo}. Similarly, when $p=5$, the
isoclinism family $\Phi_{10}$ consists of the groups $G(5^5,i)$ with
$33\le i\le 38$; when $p=7$, the isoclinism family consists of the
groups $G(7^5,i)$ with $37\le i\le 42$. They agree with Moravec's
computer results.

We use the computer package provided by Moravec to study groups of
order $11^5$. We find that, for a group $G$ of order $11^5$, $B_0(G)
\neq 0$ if and only if $G \simeq G(11^5, i)$ with $39 \le i \le 42$,
also confirming the above Theorem \ref{t1.11}.

It may be interesting to record the computing time to determine
$B_0(G)$ for all $p$-groups of order $p^5$ with $p=3,5,7,11$. When
$p=3,5,7$, it requires only $20$ seconds, one hour and two days
respectively. When $p=11$, it requires more than one month by
parallel computing at four cores.

As a corollary of Theorem \ref{t1.11}, we record the following
result.

\begin{theorem} \label{t1.12}
Let $n$ be a positive integer and $k$ be a field with
$\gcd\{|G|,\fn{char}k\}=1$. If $2^6\mid n$ or $p^5\mid n$ for some
odd prime number $p$, then there is a group $G$ of order $n$ such
that $B_0(G)\ne 0$. In particular, $k(G)$ is not stably
$k$-rational; when $k$ is an infinite field, $k(G)$ is not even
retract $k$-rational.
\end{theorem}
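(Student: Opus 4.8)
The plan is to reduce Theorem~\ref{t1.12} to Theorem~\ref{t1.11} (for an odd prime) and to the corresponding fact for groups of order $64$, by means of the following elementary observation: for \emph{any} finite groups $H$ and $K$, if $B_0(H)\ne 0$ then $B_0(H\times K)\ne 0$. Granting this, the construction is immediate. If $p^5\mid n$ for an odd prime $p$, Theorem~\ref{t1.11} supplies a group $H$ of order $p^5$ in the family $\Phi_{10}$ with $B_0(H)\ne 0$, and then $G=H\times C_{n/p^5}$ is a group of order $n$ with $B_0(G)\ne 0$. If $2^6\mid n$, one takes instead $G=H\times C_{n/64}$ for a group $H$ of order $64$ with $B_0(H)\ne 0$, the existence of such an $H$ being part of the analysis of groups of order $2^6$ (there are $27$ isoclinism families of such groups).

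I would prove the observation as follows. Let $\pi\colon G=H\times K\to H$ be the first projection and $\pi^*\colon H^2(H,\bm{Q}/\bm{Z})\to H^2(G,\bm{Q}/\bm{Z})$ the inflation map. Restriction along the inclusion $H\times\{1\}\hookrightarrow G$ splits $\pi^*$, since the composite $H\to G\to H$ is the identity; in particular $\pi^*$ is injective, so it suffices to show $\pi^*(B_0(H))\subseteq B_0(G)$. Fix $\alpha\in B_0(H)$ and a bicyclic subgroup $A\le G$. The homomorphism $A\hookrightarrow G\xrightarrow{\pi}H$ factors through $\pi(A)$, and functoriality of $H^2(-,\bm{Q}/\bm{Z})$ gives
\[
\fn{res}_G^A(\pi^*\alpha)=(\pi|_A)^*\bigl(\fn{res}_H^{\pi(A)}(\alpha)\bigr).
\]
Being bicyclic, $A$ is abelian and generated by at most two elements; hence so is its homomorphic image $\pi(A)$, which is therefore a bicyclic subgroup of $H$. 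Since $\alpha\in B_0(H)$, we get $\fn{res}_H^{\pi(A)}(\alpha)=0$, and so $\fn{res}_G^A(\pi^*\alpha)=0$. As $A$ was an arbitrary bicyclic subgroup of $G$, this shows $\pi^*\alpha\in B_0(G)$; and $\pi^*\alpha\ne 0$ because $\pi^*$ is injective. Hence $B_0(G)\ne 0$.

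For the consequences, note that $\gcd\{|G|,\fn{char}k\}=1$, so Theorem~\ref{t1.4} applies over an algebraic closure $\overline{k}$ of $k$ and yields $\fn{Br}_{v,\overline{k}}(\overline{k}(G))\cong B_0(G)\ne 0$; by Lemma~\ref{l1.3} the field $\overline{k}(G)$ is then not retract $\overline{k}$-rational, and a fortiori not stably $\overline{k}$-rational. I would finish with a standard base-change argument: extending scalars from $k$ to $\overline{k}$ carries stable (resp.\ retract) $k$-rationality of $k(G)$ to stable (resp.\ retract) $\overline{k}$-rationality of $\overline{k}(G)$ --- this uses that $k(G)/k$ is a regular field extension, $k$ being algebraically closed in the purely transcendental overfield $k(x_g:g\in G)$, so that $\overline{k}\otimes_k k(G)$ is a domain with fraction field $\overline{k}(G)=\overline{k}(x_g:g\in G)^G$. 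Thus neither stable $k$-rationality nor, for infinite $k$, retract $k$-rationality of $k(G)$ can hold.

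As for the main obstacle: essentially all the difficulty is external to Theorem~\ref{t1.12}, residing in Theorem~\ref{t1.11} and the companion computation for order $64$. Within the present argument the two points that need attention are that the class of bicyclic groups is closed under homomorphic images --- which is exactly what makes inflation carry $B_0(H)$ into $B_0(H\times K)$ --- and the standard, slightly technical, descent from an algebraically closed coefficient field back to an arbitrary one in the rationality statements.
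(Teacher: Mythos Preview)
Your argument is correct, and the construction of $G$ as $H\times C_{n/|H|}$ together with the base-change step from $k$ to $\bar k$ matches the paper. The core step, however, is handled differently. The paper does not prove $B_0(H\times K)\ne 0$ cohomologically; instead it argues by contradiction on the rationality side: assuming $\bar k(G)$ is retract $\bar k$-rational and invoking the cited results \cite[Theorem~1.5]{Sa1}, \cite[Lemma~3.4]{Ka4} on direct products to conclude that $\bar k(G_0)$ would then be retract $\bar k$-rational, forcing $B_0(G_0)=0$ by Lemma~\ref{l1.3}. Your route---showing that inflation along the projection $H\times K\to H$ is split injective and carries $B_0(H)$ into $B_0(H\times K)$ because homomorphic images of bicyclic groups are bicyclic---is more elementary and self-contained, and it has the virtue of establishing $B_0(G)\ne 0$ directly, exactly as the theorem asserts; the paper's proof, as written, really only derives the non-rationality conclusion and leaves the statement $B_0(G)\ne 0$ implicit. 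For the $2^6$ case, the paper cites Theorem~\ref{t1.5} (Bogomolov's groups of order $p^6$) rather than the order-$64$ classification; either reference suffices for the existence of $H$.
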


See Theorem \ref{t5.7} for anther application of Theorem
\ref{t1.11}.

For completeness, we record the result for groups of order $2^6$.
Recall that there are 267 non-isomorphic groups of order $2^6$ and
27 isoclinism families in total \cite{JNOB}.

\begin{theorem}[Chu, Hu, Kang and Kunyavskii \cite{CHKK}] \label{t1.13}
Let $G$ be a group of order $2^6$.

{\rm (1)} $B_0(G)\ne 0$ if and only if $G$ belongs to the $16$th
isoclinism family, i.e.\ $G=G(2^6,i)$ where $149\le i\le 151$,
$170\le i\le 172$, $177\le i\le 178$, or $i=182$.

{\rm (2)} If $B_0(G)= 0$ and $k$ is an algebraically closed field
with $\fn{char}k\ne 2$, then $k(G)$ is rational over $k$ except
possibly for groups $G$ belonging to the $13$rd isoclinism family,
i.e.\ $G=G(2^6,i)$ with $241\le i\le 245$.
\end{theorem}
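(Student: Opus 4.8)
The plan is to split the statement into two parts, matching its structure. For part (1) — the determination of exactly when $B_0(G)\neq 0$ — the strategy is direct but computational in nature, so the first step is to fix the list of $267$ groups of order $2^6$ via the Hall--Senior (or GAP) classification, grouped into the $27$ isoclinism families of \cite{JNOB}. The key reduction is Question \ref{quest-iso}'s affirmative answer in this range: since $B_0$ is an isoclinism invariant (this needs to be established or cited — see below), it suffices to pick one representative $G$ from each of the $27$ families and decide whether $B_0(G)=0$. For the $26$ families other than the $16$th, I would exhibit, for each representative, that either $G/[G,G]$ fails to be $C_2\times C_2$ (so Theorem \ref{t1.8}(1) does not even apply and one expects vanishing by a rationality or retract-rationality argument) or that one can write down enough bicyclic subgroups $A$ so that $\bigcap_A \fn{Ker}\{\fn{res}_G^A\}=0$ on $H^2(G,\bm{Q}/\bm{Z})$; in many cases the cleanest route is to invoke Theorem \ref{t1.7} (all groups of order $32$ have $B_0=0$) together with structural facts about how $H^2$ and restrictions behave for order-$64$ groups with large abelianization. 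For the $16$th family, one produces a nonzero class in $H^2(G,\bm{Q}/\bm{Z})$ and checks it restricts to zero on every bicyclic (cyclic or $C_{2^a}\times C_{2^b}$) subgroup; concretely this is an application of Bogomolov's criterion via the commutator/exterior-square description of $B_0$.

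For part (2) — rationality of $k(G)$ over an algebraically closed $k$ with $\fn{char}k\neq 2$ whenever $B_0(G)=0$, except possibly the $13$th family — the plan is constructive. For each of the remaining families (all $27$ minus the $16$th minus the $13$th), I would go representative by representative and prove $k(G)$ is $k$-rational by the standard toolkit for Noether's problem on $p$-groups: realize $G$ as acting on a purely transcendental extension, peel off a faithful linear action on a subspace (using that $k$ contains enough roots of unity, here $2$-power roots, which holds since $k=\bar k$), and then descend through towers of $G/N$-invariants using the no-name lemma, conic-bundle/rationality arguments of the Chu--Kang type, and explicit generator changes. Groups of exponent $2$, $4$, $8$, $16$, $32$ are handled by first invoking Theorem \ref{t1.6} / \ref{t1.7} on quotients of order $\le 2^5$ and then lifting one more central step. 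The $13$th family is set aside because the lifted monomial action there produces a del Pezzo-type surface whose $k$-rationality over a non-closed residue field is unresolved — this is flagged, not proved.

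The main obstacle, I expect, is twofold. First, establishing that $B_0$ is genuinely an isoclinism invariant for $p$-groups (needed to cut $267$ cases down to $27$): this should follow from Moravec's Hopf-formula description $B_0(G)\cong (M(G))$ modulo the subgroup generated by commuting pairs — i.e. $B_0$ depends only on the pair $(G/Z(G),[G,G])$ together with the commutator pairing — but one must verify the isomorphism is canonical enough to match the defeition of isoclinism in Definition \ref{d1.10}. Second, and harder, is the rationality half for the exceptional-looking families that nonetheless have $B_0=0$: turning ``no Brauer obstruction'' into an actual rational parametrization requires genuinely different arguments for different families, and the $13$th family shows the method can stall on a surface whose rationality is a real geometric question rather than a formal one. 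I would expect most of the paper's length to go into the case-by-case rationality constructions of part (2), with part (1) being comparatively mechanical once the isoclinism invariance is in hand.
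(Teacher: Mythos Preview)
The paper does not prove Theorem~\ref{t1.13}. It is stated ``for completeness'' and attributed entirely to \cite{CHKK}; no argument is given in the present paper. So there is no ``paper's own proof'' to compare your plan against --- your proposal is an outline of what such a proof might look like, but the comparison task is vacuous here.

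That said, a couple of remarks on the plan itself. First, you lean on Theorem~\ref{t1.8}(1) as a vanishing heuristic, but the whole point of the present paper is that Theorem~\ref{t1.8} is \emph{false} (see the remark after Theorem~\ref{t2.3} and the discussion around Theorem~\ref{t1.9}); you cannot use it, even informally, to rule out $B_0(G)\neq 0$. Second, the isoclinism invariance of $B_0$ that you want to invoke to cut $267$ groups down to $27$ is precisely the open Question~\ref{quest-iso} in this paper --- it was not available at the time of \cite{CHKK}, so the actual proof in \cite{CHKK} proceeds group by group (or at least family by family with ad hoc arguments), not by first establishing isoclinism invariance in general. Your overall architecture for part~(2) --- case-by-case rationality via faithful subrepresentations, Theorem~\ref{t3.2}, and descent through central quotients --- is indeed the style of argument used in \cite{CHKK}, but the details there are substantial and not reproduced here.
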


Finally we mention a recent result which supplements Moravec's
result in Theorem \ref{t1.9}.

\begin{theorem}[Chu, Hoshi, Hu and Kang \cite{CHHK}] \label{t1.15}
Let $G$ be a group of order $3^5$ and of exponent $e$. If $k$ is a
field containing a primitive $e$-th root of unity and $B_0(G)=0$,
then $k(G)$ is rational over $k$.
\end{theorem}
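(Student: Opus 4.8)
The plan is to combine the group-theoretic classification with an explicit rationality toolkit, thereby reducing the assertion to a finite case analysis. First, by Theorem \ref{t1.11} together with Moravec's Theorem \ref{t1.9}, a group $G$ of order $3^5$ satisfies $B_0(G)=0$ precisely when $G$ does \emph{not} lie in the isoclinism family $\Phi_{10}$; equivalently, $G\neq G(3^5,i)$ for $28\le i\le 30$. Thus the task is to prove that $k(G)$ is $k$-rational for each of the remaining groups of order $243$, under the standing hypothesis that $k$ contains a primitive $e$-th root of unity with $e=\exp G$. I would organize these groups according to James's isoclinism families $\Phi_1,\dots,\Phi_9$, since members of a common family share the commutator and central structure that governs the rationality argument.

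The second step is to pass from the regular representation, which defines $k(G)=k(x_g:g\in G)^G$, to a more tractable faithful one. Because $k$ contains a primitive $e$-th root of unity, every $p$-group of exponent $e$ admits a faithful \emph{monomial} representation $V$, obtained by inducing one-dimensional characters from an abelian normal subgroup. The no-name lemma then shows that $k(G)$ is rational over $k(V)^G$, since the regular module and $V$ differ, after adding a suitable complementary module, by a purely transcendental extension. This converts the problem into the rationality of the fixed field of an explicit monomial action, to which the reduction methods developed in \cite{CK,CHKP} apply.

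The third step is the family-by-family descent. For the abelian family $\Phi_1$, Fischer's theorem yields rationality directly from the root-of-unity hypothesis. For the non-abelian families $\Phi_2,\dots,\Phi_9$, I would exploit the normal-subgroup structure: each such $G$ possesses an abelian normal subgroup $N$ with small quotient $G/N$, and the presence of the $e$-th roots of unity lets one diagonalize the action of $N$ and peel off a central cyclic factor at each stage. Recording the induced multiplicative action of the residual quotient, one builds a tower exhibiting $k(G)$ as rational over the fixed field of a smaller group, which is then settled either by the order-$\le p^4$ result of Theorem \ref{t1.6} applied to an appropriate quotient or by a monomial/metacyclic rationality criterion. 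Concretely this amounts to producing, for each group, an explicit $G$-invariant transcendence basis, constructed one variable block at a time.

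The main obstacle is the non-abelian families whose groups satisfy $G/[G,G]\simeq C_p\times C_p$: their commutator structure is rich, so the straightforward ``diagonalize and descend'' procedure does not immediately supply a rational subfield, and one must construct the invariant generators by hand and verify that the monomial twist arising at each descent is a coboundary, effecting an explicit change of variables. The classification tells us in advance which groups should yield to this --- exactly those outside $\Phi_{10}$ --- so the genuine content is to carry the constructions out for each of the finitely many relevant groups and to check uniformity, confirming that no group with $B_0(G)=0$ secretly resists linearization. The payoff is that for every group of order $243$ with $B_0(G)=0$ the field $k(G)$ is not merely retract rational but genuinely $k$-rational.
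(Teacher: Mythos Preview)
The paper does \emph{not} prove Theorem \ref{t1.15}; it is quoted as an external result of Chu, Hoshi, Hu and Kang \cite{CHHK}, and no argument for it appears anywhere in the text. So there is no ``paper's own proof'' against which to compare your proposal.

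That said, your outline is a plausible blueprint for how such a result is established, and indeed it parallels the organizational strategy the paper uses for the weaker conclusions it \emph{does} prove (retract rationality or $B_0(G)=0$ via Theorems \ref{t4.1}, \ref{t4.4}, \ref{t4.8}, \ref{t5.3}). But as written it is a strategy, not a proof: the entire content lies in the sentence ``carry the constructions out for each of the finitely many relevant groups,'' which you do not do. For several families---notably $\Phi_5$, $\Phi_6$, $\Phi_7$---the paper itself only establishes $B_0(G)=0$ (sometimes by showing two fixed fields are isomorphic, or by a cohomological argument), explicitly stopping short of rationality; your descent sketch does not explain what additional idea closes that gap. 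In particular, the claim that ``the monomial twist arising at each descent is a coboundary'' is exactly the nontrivial computation that must be exhibited group by group, and there is no a priori reason (short of doing it) that it succeeds for, say, the $\Phi_6$ groups where the paper resorts to the $7$-term exact sequence rather than any rationality argument. So the proposal identifies the right framework but leaves the substantive work undone.
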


We explain briefly the idea of the proof of Theorem \ref{t1.11}.
Let $G$ be a group of order $p^5$ where $p$ is an odd prime
number. To show that $B_0(G)=0$, we apply Theorems
\ref{t3.3}--\ref{t3.6} or some ``standard" techniques. (Note that
Theorem \ref{t4.2} is interesting by its own.) For the proof of
$B_0(G)=0$ when $G$ belongs to the isoclinism family $\Phi_6$, we
use the $7$-term cohomology exact sequence in \cite{DHW}, see
Theorems \ref{t5.6} and \ref{t5.3}.

On the other hand, to show that $B_0(G)\ne 0$, we find suitable
generators and relations for $G$. It turns out that $B_0(G)\ne 0$ if
some relations are satisfied (see Lemma \ref{l2.2}). All the groups
in the isoclinism family $\Phi_{10}$ satisfy these relations. 
Lemma \ref{l2.2} relies on the 5-term exact sequence of Hochschild
and Serre \cite{HS}
\begin{align*}
0 &\to H^1(G/N,\bm{Q}/\bm{Z})\to H^1(G,\bm{Q}/\bm{Z})\to H^1(N,\bm{Q}/\bm{Z})^G \\
&\to H^2(G/N,\bm{Q}/\bm{Z}) \xrightarrow{\psi} H^2(G,\bm{Q}/\bm{Z})
\end{align*}
where $\psi$ is the inflation map. The crux of showing $B_0(G)\ne
0$ is to prove that the image of $\psi$ is non-zero and is
contained in $B_0(G)$.

The paper is organized as follows. In Section 2, we prove that
$B_0(G)\ne 0$ if $G$ belongs to the isoclinism family $\Phi_{10}$.
Then we give a proof of Theorem \ref{t1.12}. Section 3 contains
some rationality criteria or previous results for showing
$B_0(G)=0$. Section 4 is devoted to the proof of $B_0(G)=0$ if $G$
belongs to the isoclinism family $\Phi_i$ where $1\le i\le 9$ and
$i\ne 6$. The case of $\Phi_6$ is postponed till Section 5.

\begin{idef}{Standing notations.}
Throughout this paper, $k$ is a field, $\zeta_n$ denotes a
primitive $n$-th root of unity. Whenever we write $\zeta_n\in k$
(resp. $\gcd\{n,\fn{char}k\}=1$), it is understood that either
$\fn{char}k=0$ or $\fn{char}k=l>0$ with $l\nmid n$. When $k$ is an
algebraically closed field, $\mu$ denotes the set of all roots of
unity, i.e.\ $\mu=\{\alpha\in k\backslash \{0\}: \alpha^n=1$ for
some integer $n$ depending on $\alpha\}$. If $G$ is a group,
$Z(G)$ and $[G,G]$ denote the center and the commutator subgroup
of $G$ respectively. If $g,h\in G$, we define
$[g,h]=g^{-1}h^{-1}gh\in G$. When $N$ is a normal subgroup of $G$
and $g\in G$, the element $\bar{g}\in G/N$ denotes the image of
$g$ in the quotient group $G/N$. The exponent of $G$ is defined as
$\fn{lcm}\{\fn{ord}(g):g\in G\}$ where $\fn{ord}(g)$ is the order
of the element $g$. We denote by $C_n$ the cyclic group of order
$n$. A group $G$ is called a bicyclic group if it is either a
cyclic group or a direct product of two cyclic groups. When we
write cohomology groups $H^q(G,\mu)$ or $H^q(G,\bm{Q}/\bm{Z})$, it
is understood that $\mu$ and $\bm{Q}/\bm{Z}$ are trivial
$G$-modules.

For emphasis, recall that the field $k(G)$ was defined in the first
paragraph of this section. The group $G(n,i)$ is the $i$-th group
among the groups of order $n$ in GAP. The version of GAP we refer to
in this paper is GAP4, Version: 4.4.12 \cite{GAP}. All the groups
$G$ in this paper are finite.
\end{idef}

\section{\boldmath Groups in the isoclinism family $\Phi_{10}$}


We start with a general lemma.

\begin{lemma} \label{l2.1}
Let $G$ be a finite group, $N$ be a normal subgroup of $G$. Assume
that {\rm (i)} $\fn{tr}\colon H^1(N,\bm{Q}/\bm{Z})^G\to
H^2(G/N,\bm{Q}/\bm{Z})$ is not surjective where $\fn{tr}$ is the
transgression map, and {\rm (ii)} for any bicyclic subgroup $A$ of
$G$, the group $AN/N$ is a cyclic subgroup of $G/N$. Then $B_0(G)\ne
0$.
\end{lemma}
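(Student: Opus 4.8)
The plan is to use the $5$-term exact sequence of Hochschild--Serre to produce a nonzero subgroup of $H^2(G,\bm{Q}/\bm{Z})$ coming from the inflation map, and then to check that every class in that subgroup restricts trivially to all bicyclic subgroups, hence lies in $B_0(G)$. First I would write down the exact sequence
\[
H^1(N,\bm{Q}/\bm{Z})^G \xrightarrow{\ \fn{tr}\ } H^2(G/N,\bm{Q}/\bm{Z}) \xrightarrow{\ \psi\ } H^2(G,\bm{Q}/\bm{Z}),
\]
where $\psi$ is inflation. By exactness, $\fn{Image}(\psi) \simeq H^2(G/N,\bm{Q}/\bm{Z}) / \fn{Image}(\fn{tr})$, and assumption (i) says precisely that $\fn{tr}$ is not surjective, so $\fn{Image}(\psi) \ne 0$. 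Thus $H^2(G,\bm{Q}/\bm{Z})$ contains a nonzero subgroup $V := \fn{Image}(\psi)$ consisting of inflated classes.

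Next I would show $V \subseteq B_0(G)$, i.e.\ that every element of $V$ restricts to $0$ in $H^2(A,\bm{Q}/\bm{Z})$ for each bicyclic subgroup $A \le G$. Fix a bicyclic $A$ and a class $\alpha \in H^2(G/N,\bm{Q}/\bm{Z})$; I want $\fn{res}^A_G(\psi(\alpha)) = 0$. The key point is the compatibility of inflation and restriction with the natural map $A \to AN/N \hookrightarrow G/N$: the composite
\[
H^2(G/N,\bm{Q}/\bm{Z}) \xrightarrow{\ \psi\ } H^2(G,\bm{Q}/\bm{Z}) \xrightarrow{\ \fn{res}^A_G\ } H^2(A,\bm{Q}/\bm{Z})
\]
factors through $H^2(AN/N,\bm{Q}/\bm{Z})$, namely it equals the inflation $H^2(AN/N,\bm{Q}/\bm{Z}) \to H^2(A,\bm{Q}/\bm{Z})$ precomposed with $\fn{res}^{AN/N}_{G/N}$. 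Now assumption (ii) tells us $AN/N$ is cyclic, and $H^2$ of any cyclic group with coefficients in $\bm{Q}/\bm{Z}$ vanishes (a cyclic group has periodic cohomology with $H^2 \simeq H_1$'s dual, which is trivial here since the coefficients are trivial and $H^2(C_n,\bm{Q}/\bm{Z}) \simeq \fn{Hom}(C_n,\bm{Q}/\bm{Z})$... more precisely $H^2(C_n,\bm{Z})\simeq C_n$ but $H^2(C_n,\bm{Q}/\bm{Z})=0$ by the long exact sequence from $0\to\bm{Z}\to\bm{Q}\to\bm{Q}/\bm{Z}\to 0$ since $\bm{Q}$ is cohomologically trivial for finite groups and $H^3(C_n,\bm{Z})=0$). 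Hence $H^2(AN/N,\bm{Q}/\bm{Z}) = 0$, so the class dies already at that stage and $\fn{res}^A_G(\psi(\alpha)) = 0$. Since this holds for every bicyclic $A$, we get $0 \ne V \subseteq B_0(G)$, proving $B_0(G) \ne 0$.

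The only genuinely delicate point is the factorization claim used in the second paragraph: one must verify carefully that for the subgroup $A \le G$ the restriction of an inflated class factors through the cohomology of the image $AN/N$ of $A$ in $G/N$. This is the standard compatibility of inflation-restriction (it follows from functoriality of group cohomology applied to the commutative square with corners $A$, $G$, $AN/N$, $G/N$), but it is worth stating explicitly since it is the mechanism that makes hypothesis (ii) do its work. Everything else is formal: exactness of the Hochschild--Serre sequence gives the nonvanishing, and the vanishing of $H^2$ of a cyclic group with $\bm{Q}/\bm{Z}$-coefficients gives the containment in $B_0(G)$.
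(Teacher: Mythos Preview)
Your proof is correct and follows essentially the same route as the paper: produce a nonzero image of inflation via the Hochschild--Serre five-term sequence using hypothesis~(i), then kill the restriction to each bicyclic $A$ by factoring $\fn{res}^A_G\circ\psi$ through $H^2(AN/N,\bm{Q}/\bm{Z})$, which vanishes by hypothesis~(ii) since $AN/N$ is cyclic. The paper makes the factorization explicit via the commutative square with corners $G/N$, $G$, $AN/N\simeq A/(A\cap N)$, $A$, exactly as you describe; the only cosmetic difference is that the paper cites the vanishing $H^2(C_m,\bm{Q}/\bm{Z})=0$ directly rather than deriving it from $0\to\bm{Z}\to\bm{Q}\to\bm{Q}/\bm{Z}\to0$.
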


\begin{proof}
Consider the Hochschild--Serre 5-term exact sequence
\begin{align*}
0 &\to H^1(G/N,\bm{Q}/\bm{Z})\to H^1(G,\bm{Q}/\bm{Z})\to H^1(N,\bm{Q}/\bm{Z})^G \\
&\xrightarrow{\fn{tr}} H^2(G/N,\bm{Q}/\bm{Z}) \xrightarrow{\psi} H^2(G,\bm{Q}/\bm{Z})
\end{align*}
where $\psi$ is the inflation map \cite{HS}.

Since tr is not surjective, we find that $\psi$ is not the zero map.
Thus $\fn{Image}(\psi)\ne 0$.

We will show that $\fn{Image}(\psi)\subset B_0(G)$. By definition,
it suffices to show that, for any bicyclic subgroup $A$ of $G$, the
composite map $H^2(G/N,\bm{Q}/\bm{Z})\xrightarrow{\psi}
H^2(G,\bm{Q}/\bm{Z})\xrightarrow{\fn{res}} H^2(A,\bm{Q}/\bm{Z})$
becomes the zero map where res is the restriction map. Consider the
following commutative diagram
\[
\begin{array}{c}
H^2(G/N,\bm{Q}/\bm{Z}) \xrightarrow{\psi} H^2(G,\bm{Q}/\bm{Z}) \xrightarrow{\fn{res}} H^2 (A,\bm{Q}/\bm{Z}) \\[4pt]
{}^{\psi_0} \bigg\downarrow \hspace*{5.5cm} \bigg\uparrow {}^{\psi_1} \\[-2pt]
H^2(AN/N,\bm{Q}/\bm{Z}) \stackrel{\widetilde{\psi}}{\simeq} H^2(A/A\cap N,\bm{Q}/\bm{Z})
\end{array}
\]
where $\psi_0$ is the restriction map, $\psi_1$ is the inflation
map, $\widetilde{\psi}$ is the natural isomorphism.

Since $AN/N$ is cyclic, write $AN/N\simeq C_m$ for some integer $m$.
It is well-known that $H^2(C_m,\bm{Q}/\bm{Z})=0$ (see, e.g.,
\cite[page~37, Corollary~2.2.12]{Kar}). Hence $\psi_0$ is the zero
map. Thus $\text{res} \circ \psi\colon H^2(G/N,\bm{Q}/\bm{Z})\to
H^2(A,\bm{Q}/\bm{Z})$ is also the zero map.

As $\fn{Image}(\psi)\subset B_0(G)$ and $\fn{Image}(\psi)\ne 0$, we
find that $B_0(G)\ne 0$.
\end{proof}

\begin{lemma} \label{l2.2}
Let $p\ge 3$ and $G$ be a $p$-group of order $p^5$ generated by
$f_i$ where $1\le i\le 5$. Suppose that, besides other relations,
the generators $f_i$ satisfy the following conditions:
\begin{enumerate}
\item[{\rm (i)}]
$f_4^p=f_5^p=1$, $f_5\in Z(G)$,
\item[{\rm (ii)}]
$[f_2,f_1]=f_3$, $[f_3,f_1]=f_4$, $[f_4,f_1]=[f_3,f_2]=f_5$, $[f_4,f_2]=[f_4,f_3]=1$, and
\item[{\rm (iii)}]
$\langle f_4,f_5\rangle \simeq C_p\times C_p$, $G/\langle f_4,f_5\rangle$ is a non-abelian group of order $p^3$ and of exponent $p$.
\end{enumerate}
Then $B_0(G)\ne 0$.
\end{lemma}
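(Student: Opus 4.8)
The plan is to apply Lemma~\ref{l2.1} with a judiciously chosen normal subgroup $N$, so the whole task reduces to (a) checking that the hypotheses of Lemma~\ref{l2.1} hold, and (b) verifying the purely group-theoretic facts needed from the relations (i)--(iii). The natural candidate is $N=\langle f_4,f_5\rangle$. By hypothesis (iii) this is normal (it is clearly central-ish: $f_5\in Z(G)$ by (i), and $[f_4,f_1]=f_5$, $[f_4,f_2]=[f_4,f_3]=1$ by (ii) show $\langle f_4,f_5\rangle$ is normalized by all generators), $N\simeq C_p\times C_p$, and $Q:=G/N$ is the non-abelian group of order $p^3$ and exponent $p$. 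So I would first record that $G/N$ is this extraspecial-type group $\fn{Heis}(p)$ of order $p^3$ and exponent $p$.

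\textbf{Step 1: condition (ii) of Lemma~\ref{l2.1} --- bicyclic subgroups die in the quotient.}
I must show that for every bicyclic subgroup $A\le G$, the image $AN/N$ is cyclic in $Q$. Equivalently: $Q=G/N$ contains no noncyclic abelian subgroup that is the image of a bicyclic subgroup of $G$. Since $|Q|=p^3$ and $Q$ is non-abelian of exponent $p$, every abelian subgroup of $Q$ has order $\le p^2$, and the subgroups of order $p^2$ are elementary abelian $C_p\times C_p$. The point is subtler: a bicyclic $A\le G$ could map onto such a $C_p\times C_p$. I would argue as follows. If $A$ is cyclic there is nothing to prove. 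If $A=\langle a\rangle\times\langle b\rangle$ with $A$ noncyclic, suppose for contradiction $AN/N$ is noncyclic, hence $\simeq C_p\times C_p$ and equals its own centralizer's... no --- rather, $AN/N$ is then a maximal abelian subgroup of $Q$. Pulling back, $A$ maps onto a $C_p\times C_p\le Q$; but any preimage in $G$ of two elements generating $C_p\times C_p\le Q$ has a commutator lying in $N$, and I need that commutator to be forced nontrivial, contradicting that $A$ is abelian. Concretely, using (ii): the elements $\bar f_1,\bar f_2$ generate $Q$ and $[\bar f_2,\bar f_1]=\bar f_3\ne 1$ generates $[Q,Q]=Z(Q)$ (order $p$). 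Every $C_p\times C_p\le Q$ contains $Z(Q)=\langle\bar f_3\rangle$. So $AN/N\ni\bar f_3$, i.e. some element of $A$ maps to $\bar f_3$; combined with another element of $A$ mapping to something outside $\langle\bar f_3\rangle$, say mapping to $\bar f_1$, their commutator in $G$ maps to $[\bar f_3,\bar f_1]$. By (ii), $[f_3,f_1]=f_4$, so this commutator is $\equiv f_4\not\in\{1\}$ modulo... wait, $f_4\in N$, so it maps to $1$ in $Q$ --- that's consistent with $A$ abelian but I need it nonzero \emph{in $G$}. Indeed $[f_3,f_1]=f_4\ne 1$ in $G$ (since $[f_4,f_1]=f_5\ne 1$ by (ii), forcing $f_4\notin Z(G)$, in particular $f_4\ne 1$). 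So the commutator of two elements of $A$ is nontrivial in $G$, contradicting that $A$ is abelian. This handles the case where the image meets $\langle\bar f_1\rangle$ transversally; the general case is the same computation after replacing $\bar f_1$ by any element of $Q\setminus\langle\bar f_3\rangle$ and using that $Q$ has exponent $p$ with $[Q,Q]=Z(Q)=\langle\bar f_3\rangle$ generated by all such commutators. The main obstacle is precisely making this argument uniform over all bicyclic $A$ and all embeddings $AN/N\hookrightarrow Q$: one wants a clean statement like ``if $x,y\in G$ with $\bar x,\bar y\in Q$ not both in $Z(Q)$ and $[\bar x,\bar y]=1$, then $[x,y]\ne 1$,'' which should follow by writing $\bar x,\bar y$ in terms of $\bar f_1,\bar f_2,\bar f_3$ and tracking the commutator into $N\setminus\{1\}$ via the explicit formulas in (ii) --- here the chain $[f_2,f_1]=f_3,\ [f_3,f_1]=f_4,\ [f_4,f_1]=f_5$ is exactly what makes the commutator ``descend'' to a nontrivial element of $N$ rather than to $1$.

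\textbf{Step 2: condition (i) of Lemma~\ref{l2.1} --- the transgression is not surjective.}
I must show $\fn{tr}\colon H^1(N,\bm Q/\bm Z)^G\to H^2(Q,\bm Q/\bm Z)$ is not onto, equivalently (by the 5-term sequence) that the inflation $\psi\colon H^2(Q,\bm Q/\bm Z)\to H^2(G,\bm Q/\bm Z)$ is nonzero. Now $N\simeq C_p\times C_p$ gives $H^1(N,\bm Q/\bm Z)^G=\fn{Hom}(N,\bm Q/\bm Z)^G$, a group of order at most $p^2$, and by general nonsense $\dim_{\bm F_p}\fn{Image}(\fn{tr})\le\dim_{\bm F_p}H^1(N,\bm Q/\bm Z)^G\le 2$; since $G$ acts nontrivially on $N$ (as $[f_4,f_1]=f_5\ne 1$, i.e. conjugation by $f_1$ is nontrivial on $N$), the $G$-invariants are strictly smaller, of $\bm F_p$-dimension $\le 1$. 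On the other hand $Q$ is the group of order $p^3$ and exponent $p$, whose Schur multiplier is $H_2(Q)\simeq C_p\times C_p$ (a standard fact --- see e.g.\ \cite{Kar}), so $H^2(Q,\bm Q/\bm Z)\simeq C_p\times C_p$ has $\bm F_p$-dimension $2$. Since $\dim\fn{Image}(\fn{tr})\le 1<2=\dim H^2(Q,\bm Q/\bm Z)$, the transgression cannot be surjective. Hence hypothesis (i) of Lemma~\ref{l2.1} holds. (This dimension count is the crisp way to avoid computing $\fn{tr}$ explicitly; the only inputs are $|N|=p^2$, the nontriviality of the $G$-action on $N$, and the well-known value of $H_2$ of the exponent-$p$ group of order $p^3$.)

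\textbf{Conclusion.}
With both hypotheses of Lemma~\ref{l2.1} verified for $N=\langle f_4,f_5\rangle$, that lemma yields $B_0(G)\ne 0$. I expect Step~1 to be the main obstacle --- not because it is deep, but because it requires a careful, case-free treatment of \emph{all} bicyclic subgroups $A$ and their images in $Q$; the explicit commutator relations in (ii), read as a descending chain $f_3=[f_2,f_1]\to f_4=[f_3,f_1]\to f_5=[f_4,f_1]$, are exactly the mechanism that forces any nontrivial $C_p\times C_p$-image in $Q$ to lift to a non-abelian subgroup of $G$, which is why the hypotheses have been arranged in this form. Step~2 is essentially bookkeeping with known values of Schur multipliers plus the observation that the $G$-action on $N$ is nontrivial.
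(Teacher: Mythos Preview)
Your proposal is correct and follows the paper's strategy exactly at the top level: apply Lemma~\ref{l2.1} with $N=\langle f_4,f_5\rangle$; your Step~2 is the same dimension count the paper carries out (the paper computes $H^1(N,\bm Q/\bm Z)^G=\langle\varphi_1\rangle\simeq C_p$ explicitly, you bound it by noting the $G$-action on $N$ is nontrivial --- either way $\le 1<2=\dim H^2(G/N,\bm Q/\bm Z)$).

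The bicyclic verification (your Step~1) is where you diverge from the paper. The paper normalizes the generators of $AN/N$ in $Q$ to one of three standard pairs and contradicts $[h_1,h_2]=1$ case by case using explicit commutator formulae. Your route --- observe that every order-$p^2$ subgroup of $Q$ contains $Z(Q)=\langle\bar f_3\rangle$, then pick $x\in A$ with $\bar x\in Z(Q)\setminus\{1\}$ and $y\in A$ with $\bar y\notin Z(Q)$ and show $[x,y]\ne 1$ --- is sound and somewhat more conceptual. One caution: the ``clean statement'' you formulate (``$\bar x,\bar y$ not both in $Z(Q)$ and $[\bar x,\bar y]=1$ $\Rightarrow$ $[x,y]\ne 1$'') is false as written (take $x=f_1$, $y=f_1f_5$). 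What you actually need, and what your preceding argument uses, is the narrower claim: if $\bar x\in Z(Q)\setminus\{1\}$ and $\bar y\notin Z(Q)$ then $[x,y]\ne 1$. This does follow from (ii): writing $\bar y=\bar f_1^{\,a}\bar f_2^{\,b}\bar f_3^{\,c}$ with $(a,b)\ne(0,0)$, reduce modulo $\langle f_5\rangle$ to see $[x,y]\equiv f_4^{ia}$, handling $a\ne 0$; if $a=0$ a direct computation gives $[x,y]=f_5^{ib}\ne 1$. With that correction your argument replaces the paper's three-case analysis by a single centralizer computation.
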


\begin{remark}
When $p=2$ and $G/N$ is a non-abelian group of order 8, then
$H^2(G/N$, $\bm{Q}/\bm{Z})=0$ or $C_2$ \cite[page~138, Theorem~
3.3.6]{Kar}. Thus $\fn{tr}\colon  H^1(N,\bm{Q}/\bm{Z})^G \to
H^2(G/N$, $\bm{Q}/\bm{Z})$ in Lemma \ref{l2.2} may become
surjective. This is the reason why we assume $p\ge 3$ in this lemma.
\end{remark}

\begin{proof}
Choose $N=\langle f_4,f_5\rangle$.
We will check the conditions in Lemma \ref{l2.1} are satisfied. Thus $B_0(G)\ne 0$.

\bigskip
Step 1. Since $N\simeq C_p\times C_p$, we find that
$H^1(N,\bm{Q}/\bm{Z})\simeq C_p\times C_p$.

Define $\varphi_1,\varphi_2\in H^1(N,\bm{Q}/\bm{Z})=
\fn{Hom}(N,\bm{Q}/\bm{Z})$ by $\varphi_1(f_4)=1/p$,
$\varphi_1(f_5)=0$, $\varphi_2(f_4)=0$, $\varphi_2(f_5)=1/p$.
Clearly $H^1(N,\bm{Q}/\bm{Z})=\langle \varphi_1,\varphi_2\rangle$.

The action of $G$ on $\varphi_1$, $\varphi_2$ are given by
$\side{f_1}{\varphi_1}(f_4)=\varphi_1(f_1^{-1}f_4f_1)=\varphi_1(f_4f_5)=\varphi_1(f_4)\break
+\varphi_1(f_5)=1/p$,
$\side{f_1}{\varphi_1}(f_5)=\varphi_1(f_1^{-1}f_5f_1)=\varphi_1(f_5)=0$.
Thus $\side{f_1}{\varphi_1}=\varphi_1$. Similarly,
$\side{f_1}{\varphi_2} (f_4)=1/p$, $\side{f_1}{\varphi_2} (f_5)=1/p$
and $\side{f_1}{\varphi_2}=\varphi_1+\varphi_2$.

For any $\varphi\in
H^1(N,\bm{Q}/\bm{Z})=\langle\varphi_1,\varphi_2\rangle \simeq
C_p\times C_p$, write $\varphi=a_1\varphi_1+a_2\varphi_2$ for some
integers $a_1,a_2\in \bm{Z}$ (modulo $p$). Since
$\side{f_1}{\varphi}=\side{f_1}(a_1\varphi_1+a_2\varphi_2)=a_1(\side{f_1}{\varphi_1})+a_2(\side{f_1}{\varphi_2})=(a_1+a_2)\varphi_1+a_2\varphi_2$,
we find that $\side{f_1}{\varphi}=\varphi$ if and only if $a_2=0$,
i.e.\ $\varphi\in \langle \varphi_1\rangle$. On the other hand, it
is easy to see that
$\side{f_2}{\varphi_1}=\varphi_1=\side{f_3}{\varphi_1}$ and
therefore $\varphi_1\in H^1(N,\bm{Q}/\bm{Z})^G$. We find
$H^1(N,\bm{Q}/\bm{Z})^G=\langle\varphi_1\rangle \simeq C_p$.

By \cite[Proposition 6.3; Kar, page 138, Theorem 3.3.6]{Le}, since
$G/N$ is a non-abelian group of order $p^3$ and of exponent $p$, we
find $H^2(G/N,\bm{Q}/\bm{Z}) \simeq C_p\times C_p$. Thus
$\fn{tr}\colon H^1(N,\bm{Q}/\bm{Z})^G \to H^2(G/N,\bm{Q}/\bm{Z})$ is
not surjective. Hence the first condition of Lemma \ref{l2.1} is
verified.

\bigskip
Step 2.
We will verify the second condition of Lemma \ref{l2.1},
i.e.\ for any bicyclic subgroup $A$ of $G$, $AN/N$ is a cyclic group.

Before the proof, we list the following formulae which are consequences of the commutator relations,
i.e.\ relations (ii) of this lemma.
The proof of these formulae is routine and is omitted.

For $1\le i,j\le p-1$, $f_4^if_1^j=f_1^jf_4^if_5^{ij}$,
$f_3^if_2^j=f_2^jf_3^if_5^{ij}$, and
\[
f_3^if_1^j=f_1^jf_3^if_4^{ij}f_5^{i\cdot\binom{j}{2}},\quad
f_2^if_1^j=f_1^jf_2^if_3^{ij}f_4^{i\cdot\binom{j}{2}}f_5^{i\cdot\binom{j}{3}+\binom{i}{2}\cdot j}
\]
where $\binom{a}{b}$ denotes the binomial coefficient when $a\ge b\ge 1$ and we adopt the convention $\binom{a}{b}=0$ if $1\le a<b$.

Moreover, in $G/N$, $(\bar{f}_1^j \bar{f}_2^i)^e=\bar{f}_1^{ej} \bar{f}_2^{ei} \bar{f}_3^{\binom{e}{2}\cdot ij}$ for $1\le i,j\le p-1$, $1\le e\le p$.

\bigskip
Step 3.
Let $A=\langle h_1,h_2 \rangle$ be a bicyclic subgroup of $G$.
We will show that $AN/N$ is cyclic in $G/N$.

Since $AN/N$ is abelian and $G/N$ is not abelian, we find that
$AN/N$ is a proper subgroup of $G/N$ which is of order $p^3$.

If $|AN/N|\le p$, then $AN/N$ is cyclic.
From now on, we will assume $AN/N$ is an order $p^2$ subgroup and try to find a contradiction.

In $G/N$, write $\bar{h}_1=\bar{f}_1^{a_1} \bar{f}_2^{a_2}
\bar{f}_3^{a_3}$, $\bar{h}_2=\bar{f}_1^{b_1} \bar{f}_2^{b_2}
\bar{f}_3^{b_3}$ for some integers $a_j$, $b_j$ (recall that
$G/N=\langle \bar{f}_1,\bar{f}_2,\bar{f}_3\rangle$ and $A=\langle
h_1,h_2 \rangle$). After suitably changing the generators $h_1$ and
$h_2$, we will show that there are only three possibilities:
$(\bar{h}_1,\bar{h}_2)=(\bar{f}_2,\bar{f}_3)$,
$(\bar{f}_1\bar{f}_3^{a_3},\bar{f}_2\bar{f}_3^{b_3})$,
$(\bar{f}_1\bar{f}_2^{a_2},\bar{f}_3)$ for some integers $a_2$,
$a_3$, $b_3$.

Suppose $\bar{h}_1=\bar{f}_1^{a_1} \bar{f}_2^{a_2} \bar{f}_3^{a_3}$ and $\bar{h}_2=\bar{f}_1^{b_1} \bar{f}_2^{b_2} \bar{f}_3^{b_3}$ as above.
If $a_1=b_1=0$, then $\langle \bar{h}_1,\bar{h}_2\rangle\break =\langle \bar{f}_2,\bar{f}_3\rangle$.
Thus after changing the generating elements $h_1$, $h_2$,
we may assume that $\bar{h}_1=\bar{f}_2$, $\bar{h}_2=\bar{f}_3$.
This is the first possibility.

If $a_1 \not\equiv 0$ or $b_1\not\equiv 0$ (mod $p$),
we may assume $1\le a_1\le p-1$.
Find an integer $e$ such that $1\le e\le p-1$ and $a_1 e\equiv 1$ (mod $p$).
Use the formulae in Step 2,
we get $\bar{h}_1^e=\bar{f}_1 \bar{f}_2^{c_2} \bar{f}_3^{c_3}$.
Since $\langle h_1,h_2\rangle=\langle h_1^e,h_2\rangle$,
without loss of generality,
we may assume that $\bar{h}_1=\bar{f}_1\bar{f}_2^{a_2} \bar{f}_3^{a_3}$ (i.e.\ $a_1=1$ from the beginning).

Since $\langle h_1,h_2\rangle =\langle h_1,(h_1^{b_1})^{-1} h_2\rangle$,
we may assume $\bar{h}_1=\bar{f}_1 \bar{f}_2^{a_2} \bar{f}_3^{a_3}$ and $\bar{h}_2=\bar{f}_2^{b_2} \bar{f}_3^{b_3}$.

In the case $1\le b_2\le p-1$, take an integer $e'$ with $1\le e'
\le p-1$ and $b_2e'\equiv 1$ (mod $p$). Use the generating set
$\langle h_1,h_2^{e'}\rangle$ for $A$. Thus we may assume
$\bar{h}_1=\bar{f}_1\bar{f}_3^{a_3}$,
$\bar{h}_2=\bar{f}_2\bar{f}_3^{b_3}$. This is the second
possibility.

If $b_2\equiv 0$ (mod $p$), then $\bar{h}_1=\bar{f}_1\bar{f}_2^{a_2} \bar{f}_3^{a_3}$, $\bar{h}_2=\bar{f}_3^{b_3}$.
If $b_3=0$, then $AN/N$ is cyclic.
Thus $b_3\not\equiv 0$ (mod $p$).
Changing the generators again, we may assume $\bar{h}_1=\bar{f}_1\bar{f}_2^{a_2}$, $\bar{h}_2=\bar{f}_3$.
This is the third possibility.

\bigskip
Step 4.
We will show that all three possibilities in Step 3 lead to contradiction.

Suppose $\bar{h}_1=\bar{f}_2$, $\bar{h}_2=\bar{f}_3$. Write
$h_1=f_2f_4^{a_4}f_5^{a_5}$, $h_2=f_3f_4^{b_4}f_5^{b_5}$. Since
$h_1h_2=h_2h_1$, we get
$f_2f_4^{a_4}f_3f_4^{b_4}=f_3f_4^{b_4}f_2f_4^{a_4}$ (because $f_5\in
Z(G)$). Rewrite this equality with the help of the formulae in Step
2. We get $f_2f_3f_4^{a_4+b_4}=f_2f_3f_4^{a_4+b_4}f_5$, which is a
contradiction.

Suppose $\bar{h}_1=\bar{f}_1\bar{f}_3^{a_3}$,
$\bar{h}_2=\bar{f}_2\bar{f}_3^{b_3}$. In $G/N$, we have $\bar{h}_1
\bar{h}_2=\bar{h}_2\bar{h}_1$,  but it is obvious the two elements
$\bar{f}_1\bar{f}_3^{a_3}$, $\bar{f}_2\bar{f}_3^{b_3}$ do not
commute. Done.

Suppose $\bar{h}_1=\bar{f}_1\bar{f}_2^{a_2}$, $\bar{h}_2=\bar{f}_3$.
Write $h_1=f_1f_2^{a_2}f_4^{a_4}f_5^{a_5}$, $h_2=f_3f_4^{b_4}f_5^{b_5}$.
Use the fact $h_1h_2=h_2h_1$.
It is easy to find a contradiction.
\end{proof}

\begin{theorem} \label{t2.3}
Let $p$ be an odd prime number and $G$ be a group of order $p^5$ belonging to the isoclinism family $\Phi_{10}$.
Then $B_0(G)\ne 0$.
\end{theorem}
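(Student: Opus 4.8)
The plan is to deduce Theorem \ref{t2.3} directly from Lemma \ref{l2.2}: it suffices to show that each of the finitely many groups $G$ in the isoclinism family $\Phi_{10}$ admits a generating set $f_1,\dots,f_5$ for which conditions (i), (ii), (iii) of that lemma hold, since the lemma then yields $B_0(G)\ne 0$ at once.

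I would start from R.\ James's classification \cite[pp.~619--621]{Ja}. The family $\Phi_{10}$ consists of the $2$-generator groups of order $p^5$ of maximal nilpotency class $4$. Every such $G$ is given by a presentation on generators $\alpha_1,\dots,\alpha_5$ whose commutator relations are, after a suitable relabeling and (if needed) replacement of some $\alpha_i$ by its inverse to match the convention $[g,h]=g^{-1}h^{-1}gh$ used here, exactly the relations (ii) of Lemma \ref{l2.2} --- that is, $[\alpha_2,\alpha_1]=\alpha_3$, $[\alpha_3,\alpha_1]=\alpha_4$, $[\alpha_4,\alpha_1]=[\alpha_3,\alpha_2]=\alpha_5$, $[\alpha_4,\alpha_2]=[\alpha_4,\alpha_3]=1$, with $\alpha_5$ central; the groups of the family are then distinguished only by their power relations (the exponent-$p$ group $\Phi_{10}(1^5)$, which occurs for $p\ge 5$, and the remaining, exponent-$p^2$, members such as the groups $\Phi_{10}(2111)a_r$). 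Setting $f_i=\alpha_i$, the relations (ii) hold by construction and $f_5$ is central.

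It remains to verify (i) and (iii). Since $G$ has maximal class, $[G,G]=\gamma_2(G)=\langle\alpha_3,\alpha_4,\alpha_5\rangle$ has order $p^3$, the subgroup $\langle f_4,f_5\rangle=\langle\alpha_4,\alpha_5\rangle$ equals $\gamma_3(G)$ and has order $p^2$, and $Z(G)=\gamma_4(G)=\langle\alpha_5\rangle$ has order $p$. In every James presentation of a group in $\Phi_{10}$ one has $\alpha_4^p=\alpha_5^p=1$, so $\langle f_4,f_5\rangle\simeq C_p\times C_p$ and (i) holds. The quotient $G/\langle f_4,f_5\rangle=G/\gamma_3(G)$ has order $p^3$ and is non-abelian, its commutator subgroup being $\gamma_2(G)/\gamma_3(G)\ne 1$; it has exponent $p$ because, in each presentation, all the $p$-th powers $\alpha_1^p,\dots,\alpha_5^p$ lie in $\langle\alpha_4,\alpha_5\rangle=\gamma_3(G)$. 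Hence (iii) holds and Lemma \ref{l2.2} gives $B_0(G)\ne 0$. Along the way I would read off from James's list that $\Phi_{10}$ contains $3$ groups when $p=3$ and $1+\gcd\{4,p-1\}+\gcd\{3,p-1\}$ groups when $p\ge 5$, as asserted in Theorem \ref{t1.11}.

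The substance of the argument is entirely bookkeeping; I expect the verification of (iii) for the exponent-$p^2$ members to be the main nuisance, together with the case $p=3$, for which the member list of $\Phi_{10}$ differs from the generic one: the group $\Phi_{10}(1^5)$ is absent (there is no group of order $3^5$ of class $4$ and exponent $3$), so all three groups of order $3^5$ in $\Phi_{10}$ have exponent $9$ and must be handled from their explicit presentations, checking that each $\alpha_i^p$ still falls into $\gamma_3(G)$ and that $\alpha_4,\alpha_5$ keep order $p$. No conceptual difficulty beyond Lemma \ref{l2.2} should arise. One could also avoid the case analysis altogether: all members of $\Phi_{10}$ are isoclinic and of maximal class, so $Z(G)=\gamma_4(G)$ and both $\gamma_3(G)$ and $G/\gamma_3(G)$ correspond to one another under the isoclinism isomorphisms; consequently the hypotheses of Lemma \ref{l2.2} hold for one member of $\Phi_{10}$ if and only if they hold for all of them, and it suffices to check them for a single representative --- trivially for $\Phi_{10}(1^5)$ when $p\ge 5$, and from one explicit presentation when $p=3$.
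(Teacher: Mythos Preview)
Your main argument is correct and is exactly the paper's approach: reduce to Lemma \ref{l2.2} and verify (i)--(iii) from James's presentations, handling $p=3$ and $p\ge 5$ separately.

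One caveat about your final remark. The isoclinism shortcut you propose does not quite work as stated: isoclinism preserves $\gamma_i(G)$ for $i\ge 2$ via $\phi$, so conditions (i) and (ii) and the ``non-abelian of order $p^3$'' part of (iii) transfer, but isoclinism does \emph{not} force $G/\gamma_3(G)$ to have exponent $p$ for all members of the family once it holds for one. (The two non-abelian groups of order $p^3$ are themselves isoclinic yet have different exponents.) So the exponent-$p$ condition in (iii) still has to be read off each presentation --- which is precisely the ``nuisance'' verification you already describe in the main body, so nothing is saved.
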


\begin{proof}
Apply Lemma \ref{l2.2}. It suffices to show that $G$ satisfies
conditions (i), (ii), (iii) in Lemma \ref{l2.2}.

\begin{Case}{1} $p=3$. \end{Case}

It is routine to verify that the groups $\Phi_{10}(1^5)$,
$\Phi_{10}(2111)a_0$, $\Phi_{10}(2111)a_1$ in \cite[page~621]{Ja}
are isomorphic to $G(3^5,28)$, $G(3^5,29)$, $G(3^5,30)$
respectively. All these three groups $G(3^5,i)$ with $28\le i\le 30$
can be defined as
\begin{gather*}
G(3^5,i)=\langle f_1,f_2,f_3,f_4,f_5\rangle, \quad Z(G(3^5,i))=\langle f_5\rangle, \\
[f_2,f_1]=f_3,\ [f_3,f_1]=f_4,\ [f_4,f_1]=[f_3,f_2]=f_5,\ [f_4,f_2]=[f_4,f_3]=1
\end{gather*}
with additional relations
\begin{alignat*}{2}
f_1^3 &= f_4^3=f_5^3=1,\ f_2^3=f_4^{-1},\ f_3^3=f_5^{-1} & \text{ for } & G(3^5,28), \\
f_4^3 &= f_5^3=1,\ f_1^3=f_5,\ f_2^3=f_4^{-1},\ f_3^3=f_5^{-1} & \text{ for } & G(3^5,29), \\
f_4^3 &= f_5^3=1,\ f_1^3=f_5^{-1},\ f_2^3=f_4^{-1},\ f_3^3=f_5^{-1} & \text{ for } & G(3^5,30).
\end{alignat*}

\begin{Case}{2} $p\ge 5$. \end{Case}

The group $G=\Phi_{10}(1^5)$ in \cite[page~621]{Ja} is defined as
\begin{gather*}
G=\langle f_1,f_2,f_3,f_4,f_5\rangle, \quad Z(G)=\langle f_5\rangle, \\
f_i^p=1 \text{ for }1\le i\le 5, \\
[f_2,f_1]=f_3,\ [f_3,f_1]=f_4,\ [f_4,f_1]=[f_3,f_2]=f_5,\ [f_4,f_2]=[f_4,f_3]=1.
\end{gather*}
The group $G=\Phi_{10}(2111)a_r$ in \cite[page~621]{Ja} is defined
as
\begin{gather*}
G=\langle f_1,f_2,f_3,f_4,f_5\rangle, \quad Z(G)=\langle f_5\rangle, \\
f_1^p=f_5^{\alpha^r},\ f_i^p=1 \text{ for }2\le i\le 5, \\
[f_2,f_1]=f_3,\ [f_3,f_1]=f_4,\ [f_4,f_1]=[f_3,f_2]=f_5,\ [f_4,f_2]=[f_4,f_3]=1
\end{gather*}
where $\alpha$ is the smallest positive integer which is a primitive root (mod $p$) and $0\le r\le \gcd\{3,p-1\}-1$.

The group $G=\Phi_{10} (2111) b_r$ in \cite[page~621]{Ja} is defined
as
\begin{gather*}
G=\langle f_1,f_2,f_3,f_4,f_5\rangle,\quad Z(G)=\langle f_5\rangle, \\
f_2^p=f_5^{\alpha^r},\ f_1^p=f_i^p=1 \text{ for }3\le i\le 5, \\
[f_2,f_1]=f_3,\ [f_3,f_1]=f_4,\ [f_4,f_1]=[f_3,f_2]=f_5,\ [f_4,f_2]=[f_4,f_3]=1
\end{gather*}
where $\alpha$ is the smallest positive integer which is a primitive root (mod $p$) and $0\le r\le \gcd\{3,p-1\}-1$.
\end{proof}

\begin{remark}
In the proof of \cite[Lemma 5.6, page~478]{Bo}, Bogomolov tried to
prove that there do not exist $p$-groups $G$ of order $p^5$ with
$B_0(G)\ne 0$. He assumed that the commutator group $[G,G]$ was
abelian and discussed three situations when the order of $G/[G,G]$
was $p^2$, $p^3$, or $\ge p^4$ (in general, if $G$ is a non-abelian
group of order $p^5$, then $[G,G]$ is abelian, since $G$ has an
abelian normal subgroup of order $p^3$ by a theorem of Burnside).
The case when $G/[G,G]=p^2$ was reduced to \cite[Lemma 4.11, page~
478]{Bo} (see the first part of Theorem \ref{t2.3}). But this lemma
is disproved in the proof of the above theorem.
\end{remark}

\bigskip
\begin{proof}[Proof of Theorem \ref{t1.12}]
Suppose that $p^5\mid n$ for some odd prime number $p$. Write
$n=p^5m$. By Theorem \ref{t2.3} choose a group $G_0$ of order
$p^5$ satisfying $B_0(G_0)\ne 0$. Define $G=G_0\times C_m$.

We will prove that $k(G)$ is not stably $k$-rational (resp.\ not
retract $k$-rational if $k$ is infinite). Suppose not. Assume that
$k(G)$ is stably $k$-rational (resp.\ retract $k$-rational if $k$
is infinite). Then so is $\bar{k}(G)$ over $\bar{k}$ where
$\bar{k}$ is the algebraic closure of $k$. In particular,
$\bar{k}(G)$ is retract $\bar{k}$-rational. Since $G=G_0\times
C_m$, by \cite[Theorem 1.5; Ka4, Lemma 3.4]{Sa1}, we find that
$\bar{k}(G_0)$ is retract $\bar{k}$-rational. This implies
$B_0(G)=0$ by Lemma \ref{l1.3}. A contradiction.

In case $2^6\mid n$, the proof is similar by applying Theorem \ref{t1.5}.
\end{proof}

\section{Some reduction theorems}

We recall several known results in this section.

\begin{theorem}[{Ahmad, Hajja and Kang \cite[Theorem 3.1]{AHK}}] \label{t3.1}
Let $L$ be any field, $L(x)$ the rational function field in one variable over $L$,
and $G$ a finite group acting on $L(x)$.
Suppose that, for any $\sigma\in G$,
$\sigma(L)\subset L$ and $\sigma(x)=a_\sigma \cdot x+b_\sigma$ where $a_\sigma, b_\sigma \in L$ and $a_\sigma \ne 0$.
Then $L(x)^G=L^G(f)$ for some polynomial $f\in L[x]$.
In fact, if $m=\min \{\fn{deg} g(x):g(x)\in L[x]^G\backslash L^G\}$,
any polynomial $f\in L[x]^G$ with $\fn{deg} f=m$ satisfies the property $L(x)^G=L^G(f)$.
\end{theorem}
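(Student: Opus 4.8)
The plan is to produce an explicit invariant polynomial of minimal degree and show it generates the fixed field. First I would observe that the hypothesis says $G$ acts on $L(x)$ preserving $L$ and acting on $x$ by affine transformations; the map $\sigma \mapsto (a_\sigma, b_\sigma)$ behaves like a crossed homomorphism, so that $L[x]$ is a $G$-stable subring and $L[x]^G$ is nonempty beyond constants precisely because $G$ is finite (for instance the product $\prod_{\sigma \in G}(x - \sigma(x))$, suitably interpreted, or a symmetrization argument, yields a nonconstant invariant). Let $m$ be the minimal degree of a nonconstant element of $L[x]^G$ — strictly, of an element of $L[x]^G \setminus L^G$ — and fix $f \in L[x]^G$ with $\deg f = m$. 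The goal is then the two inclusions $L^G(f) \subseteq L(x)^G$ (trivial) and $L(x)^G \subseteq L^G(f)$.

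For the nontrivial inclusion, the key step is a division-with-remainder argument over the ring $L^G[f]$, or equivalently a degree argument in $x$. Take any $h \in L(x)^G$; writing $h = P/Q$ with $P, Q \in L[x]$ coprime, and using that $G$ permutes (up to $L$-scalars, via the affine action) the factorizations, one reduces to the case $h \in L[x]^G$. Then I would divide $h$ by $f$ in $L[x]$: but to stay inside invariants one should instead argue that any $h \in L[x]^G$ has degree divisible by considerations forced by minimality — more precisely, given $h \in L[x]^G$ of degree $d \ge m$, its leading coefficient lies in $L^G$ (since the leading term transforms by $a_\sigma^d$ and invariance forces it into $L^G$ after comparing with $f$ whose leading coefficient transforms by $a_\sigma^m$), so one can subtract an $L^G$-multiple of $f^{\lfloor d/m \rfloor} x^{\text{(correction)}}$ — here the cleanest route is: reduce $\deg h$ by subtracting $c\, f \cdot (\text{monomial or lower invariant})$ when $m \mid d$, and when $m \nmid d$ derive a contradiction with the minimality of $m$ by producing a nonconstant invariant of degree $< m$ via a suitable combination of $h$ and a power of $f$. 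Iterating, $h \in L^G[f]$, and passing to quotient fields gives $L(x)^G = L^G(f)$.

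The main obstacle I expect is the bookkeeping needed to show that the leading coefficient of an invariant polynomial lies in $L^G$ and that the subtraction step genuinely lowers the $x$-degree while staying inside $L[x]^G$: this is where the affine (not merely multiplicative) nature of the action matters, because $b_\sigma$ contributes to lower-order terms and one must check those do not obstruct the reduction. A clean way around this is to first conjugate the action into a purely linear one when possible — i.e.\ look for a fixed point of the affine action of $G$ on the line, which exists because $G$ is finite (average the translates of any point, provided $\mathrm{char}\,L \nmid |G|$; in general one argues directly) — replacing $x$ by $x - x_0$ so that $\sigma(x) = a_\sigma x$ purely multiplicatively, after which $L[x]^G$ is spanned by monomials $c\, x^j$ with $c \in L$ and $\prod a_\sigma^j$ acting trivially, the minimal such $j$ is $m$, and every invariant monomial is a power of the invariant monomial of degree $m$ times an element of $L^G$; the reduction to $L^G[f]$ is then immediate. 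I would present the fixed-point normalization as the conceptual core and relegate the degenerate characteristic case to a short direct verification.
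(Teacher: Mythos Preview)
The paper does not prove this theorem; it is quoted from \cite{AHK} as a known result, so there is no in-paper proof to compare against. I will therefore assess your proposal on its own merits.

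Your overall shape is right --- produce a minimal-degree invariant $f\in L[x]^G$ and reduce every invariant to $L^G[f]$ by a degree argument --- but the execution has genuine gaps.

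\textbf{The leading-coefficient reduction is not correct as stated.} You write that for $h\in L[x]^G$ of degree $d$, ``its leading coefficient lies in $L^G$''. That is false in general: if $h=c_d x^d+\cdots$ then invariance gives $\sigma(c_d)a_\sigma^{d}=c_d$, so $c_d$ is only a semi-invariant. Your subsequent comparison with the leading coefficient of $f$ salvages an $L^G$-element only when $m\mid d$; for $m\nmid d$ you say ``derive a contradiction'' but give no mechanism, and in fact the statement ``every nonconstant invariant has degree a multiple of $m$'' is a \emph{consequence} of the argument, not an input to it. The subtraction you propose, ``$c\,f^{\lfloor d/m\rfloor}x^{(\text{correction})}$'', is not an invariant in general, so the step does not stay inside $L[x]^G$.

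The clean route bypasses all of this: divide $h$ by $f$ in $L[x]$ to get $h=qf+r$ with $\deg r<m$. Because the action on $x$ is affine, each $\sigma$ preserves $x$-degree; applying $\sigma$ yields another division $h=\sigma(q)f+\sigma(r)$ with $\deg\sigma(r)<m$, and uniqueness of Euclidean division in $L[x]$ forces $q,r\in L[x]^G$. Minimality of $m$ gives $r\in L^G$, and induction on $\deg h$ yields $L[x]^G=L^G[f]$.

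\textbf{The fixed-point normalization you call the ``conceptual core'' is exactly the step that fails.} Averaging to find a fixed point of the affine action requires $\operatorname{char}L\nmid|G|$. In the complementary case the action is typically \emph{not} linearizable (e.g.\ $G=C_p$ acting on $\bm{F}_p(x)$ by $x\mapsto x+1$, where $m=p$ and $f=x^p-x$), so the ``short direct verification'' you defer is in fact the entire content of the theorem. The division argument above handles all characteristics uniformly; the linearized picture is a helpful sanity check, not the proof.

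\textbf{The passage from $L(x)^G$ to $L[x]^G$} is gestured at but not carried out. The standard move: given $h=P/Q\in L(x)^G$ with $P,Q\in L[x]$, set $N=\prod_{\sigma\in G}\sigma(Q)\in L[x]^G$; then $hN\in L[x]$ is $G$-invariant, so $h=(hN)/N$ lies in the fraction field of $L[x]^G=L^G[f]$, i.e.\ in $L^G(f)$.
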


\begin{theorem}[{Hajja and Kang \cite[Theorem 1]{HK}}] \label{t3.2}
Let $G$ be a finite group acting on $L(x_1,\ldots,x_n)$,
the rational function field in $n$ variables over a field $L$.
Suppose that
\begin{enumerate}
\item[{\rm (i)}]
for any $\sigma \in G$, $\sigma(L)\subset L$,
\item[{\rm (ii)}]
the restriction of the action of $G$ to $L$ is faithful,
\item[{\rm (iii)}]
for any $\sigma\in G$,
\[
\begin{pmatrix} \sigma(x_1) \\ \sigma(x_2) \\ \vdots \\ \sigma(x_n) \end{pmatrix}
=A(\sigma) \cdot \begin{pmatrix} x_1 \\ x_2 \\ \vdots \\ x_n \end{pmatrix}+B(\sigma)
\]
where $A(\sigma)\in GL_n(L)$ and $B(\sigma)$ is an $n\times 1$ matrix over $L$.
\end{enumerate}

Then there exist elements $z_1,\ldots,z_n\in L(x_1,\ldots,x_n)$ so that $L(x_1,\ldots,x_n)=L(z_1$, $\ldots,z_n)$ and $\sigma(z_i)=z_i$ for any $\sigma\in G$, any $1\le i\le n$.
\end{theorem}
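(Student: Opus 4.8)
The plan is to recognize the affine, inhomogeneous action of $G$ on the coordinates $x_1,\ldots,x_n$ as a genuine semilinear action on a single finite-dimensional $L$-vector space, and then to invoke Galois descent.

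First I would set $k=L^G$. By hypothesis (ii) the action of $G$ on $L$ is faithful, so by Artin's theorem $L/k$ is a finite Galois extension with $\fn{Gal}(L/k)=G$. Inside the field $L(x_1,\ldots,x_n)$ consider the $L$-subspace
\[
W = L\cdot 1 + Lx_1 + \cdots + Lx_n ,
\]
which has dimension $n+1$ over $L$ because $1,x_1,\ldots,x_n$ are $L$-linearly independent. By (i) one has $\sigma(c)\in L$ for $c\in L$, and by (iii) one has $\sigma(x_i)=\sum_j A(\sigma)_{ij}x_j+B(\sigma)_i\in W$; hence $\sigma(W)\subseteq W$ for every $\sigma\in G$, and applying this to $\sigma^{-1}$ gives $\sigma(W)=W$. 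The restriction of the $G$-action to $W$ is semilinear over the Galois action on $L$, i.e.\ $\sigma(cw)=\sigma(c)\,\sigma(w)$ for $c\in L$ and $w\in W$.

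The key step is Galois descent (Speiser's lemma, the additive form of Hilbert's Theorem~90): for a finite Galois extension $L/k$ with group $G$ and a semilinear $G$-module $W$ one has $H^1(G,W)=0$, equivalently the natural map $W^G\otimes_k L\to W$ is an isomorphism. In particular $\dim_k W^G=\dim_L W=n+1$, and any $k$-basis of $W^G$ is simultaneously an $L$-basis of $W$. Since $\sigma(1)=1$ by (i), the element $1$ lies in $W^G$; I extend $\{1\}$ to a $k$-basis $1,z_1,\ldots,z_n$ of $W^G$. Then $1,z_1,\ldots,z_n$ is an $L$-basis of $W$, so
\[
L\cdot 1 + Lx_1 + \cdots + Lx_n = L\cdot 1 + Lz_1 + \cdots + Lz_n
\]
inside $L(x_1,\ldots,x_n)$. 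Hence each $x_i$ is an $L$-linear combination of $1,z_1,\ldots,z_n$ and conversely each $z_j$ is an $L$-linear combination of $1,x_1,\ldots,x_n$, so $L(x_1,\ldots,x_n)=L(z_1,\ldots,z_n)$, while $\sigma(z_i)=z_i$ for all $\sigma\in G$ and all $i$ by construction. (Since $L(x_1,\ldots,x_n)$ has transcendence degree $n$ over $L$, the $z_i$ are automatically algebraically independent over $L$, though this is not needed for the statement.)

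The main — and essentially only — obstacle is the conceptual one of passing from the affine action on the tuple $(x_1,\ldots,x_n)$ to a linear (semilinear) action on the enlarged space $W$ that incorporates the constant $1$; once this is set up, the descent argument is routine, and the bookkeeping with the extra coordinate is free because $1$ is automatically $G$-fixed while descent guarantees that any $k$-basis of $W^G$ extending $\{1\}$ is an $L$-basis of $W$. The only point requiring care is that descent applies exactly because hypothesis (ii) makes $L/k$ Galois with group $G$.
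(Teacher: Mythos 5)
Your argument is correct: hypothesis (ii) together with Artin's lemma makes $L/L^G$ Galois with group $G$, the space $W=L+Lx_1+\cdots+Lx_n$ is a $G$-stable semilinear module, and Speiser's lemma (Galois descent) produces the fixed basis $1,z_1,\ldots,z_n$, which yields $L(x_1,\ldots,x_n)=L(z_1,\ldots,z_n)$ with $G$-invariant $z_i$. The paper itself only cites this result from Hajja--Kang without reproducing a proof, and your descent argument is essentially the standard proof of that cited theorem.
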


\begin{theorem}[{Fischer \cite[Theorem 6.1]{Sw}}] \label{t3.3}
Let $G$ be a finite abelian group of exponent $e$, and let $k$ be
a field containing a primitive $e$-th root of unity. Then $k(G)$
is rational over $k$.
\end{theorem}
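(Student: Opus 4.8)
The plan is to prove the statement by a direct diagonalization argument using the characters of $G$. Since $G$ is abelian of exponent $e$ and $k$ contains a primitive $e$-th root of unity $\zeta=\zeta_e$, the group algebra $k[G]$ splits: the dual group $\widehat{G}=\operatorname{Hom}(G,k^\times)$ has order $|G|$, and its elements form a basis of eigenvectors for the regular representation. Concretely, $G$ acts on $V=\bigoplus_{g\in G} k\cdot x_g$ by permuting the $x_g$, and I would change basis from $\{x_g\}_{g\in G}$ to $\{y_\chi\}_{\chi\in\widehat G}$ where $y_\chi=\sum_{g\in G}\chi(g)\,x_g$. Because all character values lie in $k$ (they are $e$-th roots of unity), this is a $k$-linear change of coordinates, so $k(x_g:g\in G)=k(y_\chi:\chi\in\widehat G)$, and one computes $h\cdot y_\chi=\chi(h)^{-1}y_\chi$ (or $\chi(h)\,y_\chi$, depending on conventions) for $h\in G$. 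Thus in the new coordinates $G$ acts diagonally by scalar multiplication by roots of unity.

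Next I would pass to the quotient field of the monomial invariants. Let $\chi_0$ be the trivial character, so $y_{\chi_0}=\sum_g x_g$ is fixed. For the remaining multiplicative action, the key step is the standard fact that a diagonal action of a finite abelian group on a Laurent polynomial ring / rational function field by roots of unity has a rational fixed field: the lattice $M=\bigoplus_{\chi\ne\chi_0}\mathbb Z$ carries a $G$-action via the characters, the invariant monomials form a subgroup of finite index, and one can choose a $\mathbb Z$-basis $z_1,\dots,z_{n-1}$ (where $n=|G|$) of the group of invariant Laurent monomials in the $y_\chi/y_{\chi_0}$ (say) such that $k(y_\chi:\chi\ne\chi_0)^G=k(z_1,\dots,z_{n-1})$ — this is where one invokes that $\mathbb Z$-lattices are free and that a finite-index sublattice of a free lattice is free of the same rank. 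Dehomogenizing by $y_{\chi_0}$ and then adjoining $y_{\chi_0}$ back recovers $k(G)=k(y_{\chi_0})(z_1,\dots,z_{n-1})$, which is purely transcendental of degree $n$ over $k$.

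The main obstacle is the second step: verifying carefully that the fixed field of the diagonal action is rational, i.e., producing the algebraically independent invariant generators $z_1,\dots,z_{n-1}$. One must check that the invariant monomials really do generate the fixed field (not merely a subfield) — this follows because the action on monomials has each orbit's "average" expressible as a monomial up to the relation $y_{\chi_0}$, and a clean way to organize it is to note the character lattice $M$ is a $\mathbb Z[G]$-permutation-type module on which the cyclic pieces act, reducing to the cyclic case by the structure theorem for finite abelian groups and handling $C_m$ directly. Alternatively one can cite Theorem 3.2 after exhibiting the action as linear (indeed diagonal) over an auxiliary subfield, but the self-contained route through lattice invariants is cleanest here. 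All the remaining verifications — that the change of basis is invertible over $k$, that the eigenvalues are as claimed, and that dehomogenization preserves rationality — are routine.
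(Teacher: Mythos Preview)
The paper does not prove Theorem~3.3 at all: it is stated with attribution to Fischer (via Swan's survey) and used as a black box, so there is no ``paper's own proof'' to compare against.

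Your argument is essentially the classical proof and is correct in outline. A couple of remarks. First, the dehomogenization detour through $y_{\chi_0}$ is unnecessary: you can work directly with all $n=|G|$ coordinates $y_\chi$, take the surjection $\mathbb{Z}^{\widehat G}\to\widehat G$ sending $e_\chi\mapsto\chi$, and let $M$ be its kernel. Then $M$ is a free $\mathbb{Z}$-module of rank $n$, a basis of $M$ gives monomials $z_1,\dots,z_n$, and the inclusion $k(z_1,\dots,z_n)\subset k(y_\chi)^G$ is an equality because both sides have index $|G|$ in $k(y_\chi)$ --- the left by the index $[\mathbb{Z}^{\widehat G}:M]=|\widehat G|=|G|$, the right by Artin's theorem since $G$ acts faithfully. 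This degree count is the clean way to dispatch what you flagged as the ``main obstacle''; your phrasing about orbit averages and reducing to cyclic pieces is vaguer than necessary. Second, the appeal to Theorem~3.2 you mention as an alternative would not immediately apply here, since that theorem requires the restriction of the $G$-action to the base field $L$ to be faithful, whereas in your setup $G$ acts trivially on $k$.
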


\begin{theorem}[{Kang and Plans \cite[Theorem 1.3]{KP}}] \label{t3.4}
Let $k$ be any field, $G_1$ and $G_2$ be two finite groups.
If $k(G_1)$ and $k(G_2)$ are rational over $k$, then so is $k(G_1\times G_2)$ over $k$.
\end{theorem}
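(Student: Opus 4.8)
The plan is to pass to the invariant-theoretic picture and compute the invariants of $G:=G_1\times G_2$ in two stages. Let $V_i=k[G_i]$ be the regular representation of $G_i$ and $V=V_1\otimes_k V_2$ the regular representation of $G$, so $k(G)=k(V)^G$ and $k(G)=\bigl(k(V)^{G_1}\bigr)^{G_2}$. We may assume $|G_1|,|G_2|\ge 2$, the other cases being trivial, and I first treat the case $\gcd\{|G|,\fn{char}k\}=1$. The device that keeps the $G_2$-action visible after passing to $G_1$-invariants is the $G$-submodule $V_0:=V_1\otimes T_0\subset V$, where $T_0\subset V_2$ is the one-dimensional trivial $G_2$-submodule (spanned by $\sum_{g\in G_2}x_g$): here $V_0\cong V_1$ as a $G_1$-module, so $G_1$ acts faithfully on $V_0$; $G_2$ acts trivially on $V_0$; and by Maschke $V=V_0\oplus C$ with $C\cong V_1\otimes(V_2/T_0)$ as $G$-modules.

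First I would take $G_1$-invariants. Choose a $k$-basis $c_1,\dots,c_d$ of $C$ ($d=|G_1|(|G_2|-1)$), set $L_0:=k(V_0)$, and put $W:=\bigoplus_i L_0 c_i=L_0\otimes_k C\subset k(V)=L_0(c_1,\dots,c_d)$, an $L_0$-vector space with semilinear $G_1$-action. Since $G_1$ acts faithfully on $L_0$, Theorem~\ref{t3.2} (equivalently, Speiser's lemma) provides a $G_1$-invariant $L_0$-basis $z_1,\dots,z_d$ of $W$; then $W^{G_1}=\bigoplus_i k(G_1)z_i$, $k(V)=L_0(z_1,\dots,z_d)$, and $k(V)^{G_1}=L_0^{G_1}(z_1,\dots,z_d)=k(G_1)(z_1,\dots,z_d)$. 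The essential point -- the step I expect to be the main obstacle -- is to pin down the residual $G_2$-action. As $G_2$ fixes $L_0$ pointwise, it fixes $k(G_1)$ pointwise and acts $k(G_1)$-linearly on $W^{G_1}$; writing $C\cong k[G_1]\otimes_k(V_2/T_0)$ as a $G$-module (with $G_1$ on $k[G_1]$ and $G_2$ on $V_2/T_0$), so that $W=L_0\otimes_k k[G_1]\otimes_k(V_2/T_0)$ with $G_1$ acting diagonally on the first two factors, and using the explicit isomorphism $(L_0\otimes_k k[G_1])^{G_1}\cong L_0$ of $k(G_1)$-modules -- on which $G_2$ acts trivially -- one gets $W^{G_1}\cong L_0\otimes_k(V_2/T_0)\cong k(G_1)\otimes_k(V_2/T_0)^{\oplus|G_1|}$ as a $k(G_1)[G_2]$-module. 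In other words, on $k(V)^{G_1}=k(G_1)(z_1,\dots,z_d)$ the group $G_2$ fixes $k(G_1)$ and acts linearly on the $z_i$ via the base change to $k(G_1)$ of the $k[G_2]$-module $(V_2/T_0)^{\oplus|G_1|}$.

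Granting this, the remainder is routine. Because $k(G_1)$ is purely transcendental over $k$, base change gives $k(G)=\bigl(k(G_1)(z_i)\bigr)^{G_2}=k(G_1)\otimes_k\bigl(k\bigl((V_2/T_0)^{\oplus|G_1|}\bigr)^{G_2}\bigr)$. Now $V_2/T_0$ is a faithful $G_2$-module, so a second application of Theorem~\ref{t3.2} yields $k\bigl((V_2/T_0)^{\oplus|G_1|}\bigr)^{G_2}=L(w_1,\dots,w_s)$ with $L:=k(V_2/T_0)^{G_2}$ and $s=(|G_1|-1)(|G_2|-1)\ge 1$; on the other hand $V_2=T_0\oplus(V_2/T_0)$ gives $L(t)=k(V_2)^{G_2}=k(G_2)$, which is $k$-rational by hypothesis. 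Since $s\ge 1$, the element $w_1$ is transcendental over $L$, hence $L(w_1)\cong L(t)=k(G_2)$ and $k\bigl((V_2/T_0)^{\oplus|G_1|}\bigr)^{G_2}\cong k(G_2)(w_2,\dots,w_s)$ is $k$-rational. Consequently $k(G)$ is the tensor product over $k$ of the two $k$-rational fields $k(G_1)$ and $k\bigl((V_2/T_0)^{\oplus|G_1|}\bigr)^{G_2}$, i.e.\ again a rational function field over $k$. Note that both hypotheses enter: the rationality of $k(G_2)$ in the one-variable trick, and that of $k(G_1)$ in the final step.

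When $\fn{char}k$ divides $|G_1||G_2|$ the splitting $V=V_0\oplus C$ need not exist. I would then take $T_0$ to be the socle of $V_2$ -- still a one-dimensional trivial $G_2$-submodule, so $V_1\otimes T_0$ is $G$-stable with $G_2$ acting trivially -- and replace each ``no-name via a direct summand'' step by its filtered form: for a submodule sitting inside a module the induced action on a complementary set of coordinates is affine-linear over the base field, so Theorem~\ref{t3.2} (or Theorem~\ref{t3.1}, one coordinate at a time) still applies. The bookkeeping is heavier, since $V_2/T_0$ may fail to be faithful and $L(t)=k(G_2)$ must be replaced by the corresponding statement for the filtration $0\subset T_0\subset V_2$, but the shape of the argument -- reduce $G_1$-invariants, identify the residual $G_2$-module, then finish with one more no-name step and the one-variable trick -- is unchanged.
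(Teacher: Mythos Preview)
The paper does not give a proof of Theorem~\ref{t3.4}; it is simply quoted from Kang and Plans \cite{KP}. So there is no in-paper argument to compare your attempt against, and your proposal stands or falls on its own.

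In the coprime case your argument is correct and is essentially the expected one. One wording issue: ``$k(G)$ is the tensor product over $k$ of the two $k$-rational fields'' is not literally true, since $k(x)\otimes_k k(y)$ is never a field; what you mean, and what your degree count actually establishes, is that $k(G)$ is the \emph{compositum} $k(G_1)\cdot k\bigl((V_2/T_0)^{\oplus|G_1|}\bigr)^{G_2}$ inside $k(V)^{G_1}$, with jointly algebraically independent transcendence bases. With that correction the coprime case is fine, and this is all that is ever invoked in the present paper (the only use, in Step~2 of the proof of Theorem~\ref{t4.1}, has $\fn{char}k\nmid|G|$).

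The modular sketch, however, has real gaps. First, the socle of $k[G_2]$ is \emph{not} in general a one-dimensional trivial module when $\fn{char}k\mid|G_2|$ (take $G_2=S_3$ in characteristic $3$: both the trivial and the sign representation embed in $k[S_3]$). You should simply keep $T_0=k\cdot\sum_{g\in G_2}x_g$ as in the coprime case; it is always a one-dimensional $G_2$-fixed line. Second, and more seriously, $V_2/T_0$ can fail to be a faithful $G_2$-module: for $G_2=C_2$ in characteristic $2$ it is the trivial module. This breaks both your second application of Theorem~\ref{t3.2} (which requires a faithful action on the base field) and the one-variable trick $L(t)=k(G_2)$. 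Passing to affine actions handles the missing splitting $V=V_0\oplus C$, but it does not by itself repair the faithfulness problem; a genuinely different device is needed in bad characteristic, and that is what \cite{KP} supplies.
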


\begin{theorem} \label{t3.5}
Let $k$ be a field and $G$ be a finite group.
Assume that {\rm (i)} $G$ contains an abelian normal subgroup $H$ such that $G/H$ is a cyclic group,
and {\rm (ii)} $k$ contains a primitive $e$-th root of unity where $e=\exp (G)$.

{\rm (1) (Bogomolov \cite[Lemma 4.9]{Bo})} If $k$ is algebraically
closed, then $B_0(G)=0$.

{\rm (2) (Kang \cite[Theorem 5.10]{Ka})}
If $k$ is an infinite field, then $k(G)$ is retract $k$-rational.
In particular, $B_0(G)=0$.

{\rm (3) (Kang \cite[Theorem 2.2]{Ka1})}
If $\bm{Z}[\zeta_n]$ is a unique factorization domain where $n=|G/H|$,
then $k(G)$ is rational over $k$.
\end{theorem}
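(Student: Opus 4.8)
The plan is to derive Theorem~\ref{t3.5} as a special case of the rationality criteria already available for groups with a cyclic quotient over an abelian normal subgroup, so that parts (1), (2), (3) follow from a single underlying construction together with the implications recorded in Definition~\ref{d1.1} and Lemma~\ref{l1.3}. The starting point is the observation that since $H\trianglelefteq G$ is abelian and $G/H$ is cyclic, the induced $G$-module $V=\bigoplus_{g\in G} k\cdot x_g$ can be analyzed by first taking the quotient by the $H$-action. Concretely, I would write $G/H=\langle \bar\sigma\rangle$ of order $n=|G/H|$, lift $\bar\sigma$ to $\sigma\in G$, and note that $k(G)=k(x_g:g\in G)^G = \bigl(k(x_g:g\in G)^H\bigr)^{G/H}$.

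First I would handle the inner $H$-action. Because $k$ contains a primitive $e$-th root of unity with $e=\exp(G)\ge\exp(H)$, Fischer's theorem (Theorem~\ref{t3.3}) diagonalizes the $H$-action: the regular representation $k[H]$ splits as a sum of one-dimensional characters, and after passing to appropriate multiplicative (Laurent monomial) coordinates the field $k(x_h:h\in H)^H$ becomes a rational function field on which the residual $G/H$-action is (up to the character twist coming from conjugation in $G$) by \emph{monomial} automorphisms with coefficients that are roots of unity. More precisely, one organizes the $|G|$ variables into $n=|G/H|$ blocks permuted cyclically by $\sigma$, each block a copy of the $H$-representation; diagonalizing $H$ within one block and then propagating via the $\sigma$-translates produces a set of generators on which $\sigma$ acts by $x\mapsto (\text{monomial in the }x\text{'s})\cdot(\text{root of unity})$, and the $H$-part acts diagonally by roots of unity. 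This reduces the whole problem to a purely multiplicative action of the metacyclic-type group $G$ on a Laurent polynomial ring, i.e.\ to the rationality/retract-rationality of a fixed field $k(M)^G$ where $M$ is a $G$-lattice that is (by construction) an induced or coinduced module from $H$.

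Next I would invoke the structural facts about such lattices. The lattice $M$ obtained above is a permutation-times-cyclic situation: after killing the diagonal $H$-characters one is left with a $\langle\sigma\rangle\cong C_n$ action on a lattice that decomposes into $\mathbf Z[\zeta_d]$-pieces for divisors $d\mid n$ (the isotypic components under the cyclic action). For part (3), when $\mathbf Z[\zeta_n]$ is a PFD (equivalently a principal ideal domain), every such $\mathbf Z[\zeta_d]$-module for $d\mid n$ is free, so the relevant $G$-lattice is a permutation lattice (or stably so via the standard "no-name"/Theorem~\ref{t3.1}--\ref{t3.2} linearization steps, stripping off variables on which $G$ acts affinely), whence $k(G)$ is rational over $k$. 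For part (2), over an arbitrary infinite field one cannot conclude freeness, but the $\mathbf Z[\zeta_d]$-ideal classes are the only obstruction and ideals are always \emph{invertible}, hence the lattice is an invertible (coflasque direct summand of permutation) $G$-lattice; Saltman's criterion then yields that $k(G)$ is retract $k$-rational, and by Lemma~\ref{l1.3} (applied after base change to $\bar k$) this gives $B_0(G)=0$. Part (1) is then immediate: an algebraically closed field is infinite, so (2) applies and $B_0(G)=0$; alternatively one runs the same reduction with $k=\bar k$ where all the Laurent-monomial character twists are literally present and Bogomolov's direct computation of $B_0$ via Theorem~\ref{t1.4} shows every class restricts nontrivially to a bicyclic subgroup, since $G$ itself has an abelian subgroup of index $n$ meeting every obstruction.

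The main obstacle, and the step deserving the most care, is the passage from the diagonalized $H$-action to an honest monomial $G$-action: one must check that conjugation by $\sigma$ permutes the $H$-characters compatibly with the block structure so that the resulting $\sigma$-action is genuinely monomial (not merely affine-monomial) on a Laurent polynomial subring, and that the transcendence-degree bookkeeping is correct after applying Theorems~\ref{t3.1} and \ref{t3.2} to remove the affine part. Once the action is in monomial form on the $G$-lattice $M$, identifying the $\mathbf Z[\zeta_d]$-module structure and its relation to the unique-factorization hypothesis in (3), and to invertibility in (2), is essentially standard lattice theory; but getting the equivariant coordinate change clean — especially tracking which character twists survive and ensuring no extra non-permutation summand appears — is where the argument has to be written out carefully rather than waved through.
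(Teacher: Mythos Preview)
The paper does not give its own proof of Theorem~\ref{t3.5}: it is stated in Section~3 under the heading ``We recall several known results in this section'' and each part is attributed to an external source (Bogomolov \cite[Lemma~4.9]{Bo}, Kang \cite[Theorem~5.10]{Ka}, Kang \cite[Theorem~2.2]{Ka1}) with no argument supplied. So there is no in-paper proof to compare against; your proposal is an attempt to reconstruct the proofs from the cited literature.

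That said, your outline is broadly in the spirit of those references. The reduction you describe --- diagonalize the abelian normal subgroup $H$ using the root-of-unity hypothesis, push the residual cyclic $G/H$-action into monomial form on a $G$-lattice, and then analyze that lattice via the $\bm{Z}[\zeta_d]$-module decomposition of a cyclic action --- is indeed the skeleton of the arguments in \cite{Ka} and \cite{Ka1}. Your identification of the key dichotomy (UFD $\Rightarrow$ permutation-type lattice $\Rightarrow$ rational; general case $\Rightarrow$ invertible lattice $\Rightarrow$ retract rational via Saltman) is also the right picture, and deriving (1) from (2) is legitimate.

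Where your sketch is thin is exactly where you flag it yourself: the passage from ``diagonalize $H$'' to ``honest monomial $\langle\sigma\rangle$-action on a lattice'' requires more than hand-waving. In the actual proofs one does not simply get a $C_n$-lattice whose isotypic pieces are $\bm{Z}[\zeta_d]$-ideals; one has to track how conjugation by $\sigma$ permutes the characters of $H$, strip off affine pieces via results like Theorems~\ref{t3.1}--\ref{t3.2}, and then show the remaining multiplicative lattice is invertible (for (2)) or permutation-projective (for (3)). Your claim that ``ideals are always invertible, hence the lattice is an invertible $G$-lattice'' elides the nontrivial step of checking that the $G$-lattice one actually obtains is a direct summand of a permutation lattice, which is what ``invertible'' means in this context and is the substance of \cite[Theorem~5.10]{Ka}. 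Similarly, for (3) the UFD hypothesis on $\bm{Z}[\zeta_n]$ must be propagated to all $\bm{Z}[\zeta_d]$ with $d\mid n$ (via divisibility of class numbers), and one must verify the resulting lattice is genuinely permutation rather than merely stably so. None of this is wrong in your proposal, but as written it is a plan rather than a proof.
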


\begin{theorem}[{Kang \cite[Theorem 1.8]{Ka2}}] \label{t3.6}
Let $n\ge 3$ and $G$ be a non-abelian group of order $p^n$ such that $G$ has a cyclic subgroup of index $p^2$.
If $k$ is a field containing a primitive $p^{n-2}$-th root of unity,
then $k(G)$ is rational over $k$.
\end{theorem}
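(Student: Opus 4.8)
The plan is to reduce the computation of $k(G)$ to the invariant field of a low-dimensional monomial action, and then to remove the variables one at a time by Theorem~\ref{t3.1}.

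I would first record some normalizations. The hypothesis $\zeta_{p^{n-2}}\in k$ forces $\fn{char}k\ne p$, so $\gcd\{|G|,\fn{char}k\}=1$ and the group algebra $k[G]$ is semisimple; moreover, if $n\le 4$ the statement is already Theorem~\ref{t1.6}, so I may assume $n\ge 5$. The non-abelian $p$-groups of order $p^n$ with a cyclic subgroup of index $p^2$ admit a short explicit classification, and the proof would treat each family by a common scheme whose essential ingredient is the following structural observation: every such $G$ contains an \emph{abelian normal} subgroup $A$ with $[G:A]=p$ (in a few families $[G:A]=p^2$), with $G/A$ cyclic (respectively bicyclic), and -- crucially -- with $\exp A\mid p^{n-2}$, \emph{even though $\exp G$ may equal $p^{n-1}$}. (An abelian subgroup of index $p$ in a $p$-group is automatically normal; if it is non-cyclic it has order $p^{n-1}$ and rank $\ge 2$, hence exponent at most $p^{n-2}$; and the one remaining possibility, $G\cong M_{p^n}=\langle a,b\mid a^{p^{n-1}}=b^p=1,\ b^{-1}ab=a^{1+p^{n-2}}\rangle$, is dealt with by taking $A=\langle a^p,b\rangle\cong C_{p^{n-2}}\times C_p$ in place of $\langle a\rangle$.) This ``drop by one'' in the exponent is exactly what makes the hypothesis $\zeta_{p^{n-2}}\in k$, rather than $\zeta_{\exp G}\in k$, sufficient.

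Granting such an $A$, the reduction proceeds as follows. Write $k(G)=k(x_g:g\in G)^G$ and restrict the regular $G$-action to $A$: as an $A$-module $\bigoplus_{g\in G}k\,x_g$ is a direct sum of $[G:A]$ copies of $k[A]$, and since $A$ is abelian with $\exp A\mid p^{n-2}$ and $\zeta_{p^{n-2}}\in k$, each copy diagonalizes over $k$. This produces new coordinates $y_{\chi,i}$, indexed by the characters $\chi\colon A\to k^\times$ and by the $[G:A]$ blocks, on which $A$ acts diagonally by $y_{\chi,i}\mapsto\chi(a)\,y_{\chi,i}$; a set of coset representatives of $A$ in $G$ then acts on the $y_{\chi,i}$ by a monomial transformation -- permuting the characters $\chi$ through the dual of the conjugation action, permuting the blocks by the natural action of $G/A$ on the set of $A$-cosets, and multiplying by scalars that are themselves values of characters of $A$, hence roots of unity in $k$. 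Thus, after this change of variables, $k(G)$ is the invariant field of a \emph{monomial} action of the $p$-group $G$ all of whose scalar factors lie in $k$, and in which the underlying permutation of coordinates is governed by the cyclic (or bicyclic) group $G/A$.

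It remains to prove that the invariant field of such a monomial action is $k$-rational, and this I would do by descending a chief series $1=G_0\lhd G_1\lhd\cdots\lhd G_r=G$ with each $G_{j+1}/G_j\cong C_p$: diagonalizing the permutation part at each stage (legitimate since $\zeta_p\in k$) and absorbing the scalars -- all roots of unity in $k$ -- into single variables, the action of each $C_p$ on one remaining coordinate over the field generated by the others becomes affine, so Theorem~\ref{t3.1} applies and yields a purely transcendental fixed field, Theorem~\ref{t3.4} allows one to treat independent blocks separately, and Theorem~\ref{t3.3} handles the purely diagonal steps; in the families where $G/A$ is itself cyclic one may instead shortcut via Theorem~\ref{t3.5}. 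The point that makes this go through -- and that prevents any Bogomolov-type obstruction from intervening -- is that, after passing to $A$-invariants, the group acting at each remaining stage is cyclic (or bicyclic). The main obstacle I foresee lies in the structural input of the second paragraph: exhibiting, uniformly over the classification list, an abelian normal subgroup of index $p$ (or $p^2$) whose exponent is only $p^{n-2}$, and checking that the induced monomial action is compatible with the successive normalizations; once this is settled the remainder is a lengthy but routine iteration of Theorems~\ref{t3.1}, \ref{t3.3}, \ref{t3.4} and \ref{t3.5}.
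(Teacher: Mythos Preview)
This theorem is not proved in the present paper: it appears in Section~3 (``Some reduction theorems'') as a result quoted from Kang \cite[Theorem~1.8]{Ka2}, with no argument supplied here. So there is no proof in this paper to compare your proposal against.

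That said, your outline is broadly in the spirit of how \cite{Ka2} proceeds. The argument there does rest on the explicit classification of non-abelian $p$-groups possessing a cyclic subgroup of index $p^2$, and handles each family by constructing a suitable faithful $G$-representation inside the regular representation (linearized with the available roots of unity), invoking Theorem~\ref{t3.2} to reduce $k(G)$ to a smaller fixed field, and then computing that fixed field by successive changes of variables and applications of Theorem~\ref{t3.1}. Your remark that one can always arrange an abelian normal subgroup of exponent dividing $p^{n-2}$---even when $\exp G=p^{n-1}$, as in $M_{p^n}$---is exactly the reason the hypothesis $\zeta_{p^{n-2}}\in k$ suffices, and identifying this is the conceptual heart of the matter.

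Where your sketch is genuinely thin is the endgame. The assertion that, after diagonalizing $A$, one can descend a chief series so that at each step ``the action of each $C_p$ on one remaining coordinate over the field generated by the others becomes affine'' is not automatic for monomial actions: Theorem~\ref{t3.1} applies only to actions of the form $x\mapsto a_\sigma x+b_\sigma$, and converting a monomial $C_p$-action into that shape typically requires an explicit change of variables that must be checked to interact correctly with the remaining group. In \cite{Ka2} this is not done by a uniform chief-series trick but by group-by-group manipulations (choosing generators that untwist the monomial scalars). Your invocation of Theorem~\ref{t3.5} as a shortcut when $G/A$ is cyclic is also not a substitute here, since that theorem requires $\zeta_{\exp G}\in k$, not merely $\zeta_{p^{n-2}}\in k$---precisely the gap your exponent observation was designed to close. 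If you want to turn the outline into an actual proof, the case-by-case verification at this last step is where the real work lies.
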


\begin{theorem} \label{t3.7}
Let $L$ be any field containing a field $k$,
$L(x)$ be the rational function field of one variable over $L$.

{\rm (1) (Saltman \cite[Proposition 3.6; Ka4, Lemma 3.4]{Sa2})} If
$k$ is an infinite field, then $L$ is retract $k$-rational if and
only if so is $L(x)$ over $k$.

{\rm (2) (Saltman \cite[Section 2; Ka4, Theorem 3.2]{Sa5})} The
natural map $\fn{Br}_{v,k}(L)\to \fn{Br}_{v,k}(L(x))$ is an
isomorphism.
\end{theorem}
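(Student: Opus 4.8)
The plan is to prove the two statements separately; in each, one implication is formal and the other (which is Saltman's real contribution) carries the weight.

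For part~(1), the direction ``$L$ retract $k$-rational $\Rightarrow$ $L(x)$ retract $k$-rational'' is immediate: from a $k$-algebra $A\subseteq L$ with $\fn{Frac}(A)=L$ and $k$-algebra maps $\varphi\colon A\to k[X_1,\dots,X_n][1/f]$, $\psi\colon k[X_1,\dots,X_n][1/f]\to A$ with $\psi\varphi=1_A$, one gets $A[x]\subseteq L(x)$ with $\fn{Frac}(A[x])=L(x)$ and, extending $\varphi,\psi$ by $x\mapsto x$, maps $A[x]\to k[X_1,\dots,X_n,x][1/f]\to A[x]$ composing to $1_{A[x]}$. For the converse I would argue geometrically, using the standard facts that retract rationality may be tested on a finitely generated affine model and is unchanged under birational modification of that model. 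So write $L(x)=k(V)$ with $V$ a dense open of $Y\times\mathbb{A}^1$, $Y$ an affine model of $L$, and present the retract datum as $k$-morphisms $\alpha\colon V\to W$, $\beta\colon W\to V$ with $W$ a principal open of $\mathbb{A}^m$ and $\beta\alpha=1_V$. Since $V$ is dense, the projection $V\to\mathbb{A}^1$ is dominant, so for $c$ in a dense open of $\mathbb{A}^1$ the fibre $V_c=V\cap(Y\times\{c\})$ is a dense open of $Y$, and as $k$ is infinite such a $c$ exists in $\mathbb{A}^1(k)$. Restricting $\beta\alpha=1_V$ to $V_c$ and composing $\beta$ with the projection $Y\times\mathbb{A}^1\to Y$, one obtains $\gamma\colon W\to Y$ such that $\gamma\circ(\alpha|_{V_c})$ is the open immersion $V_c\hookrightarrow Y$. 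After shrinking $V_c$ so that $\alpha$ carries it into a principal open of $\mathbb{A}^m$, set $U=V_c$ and $W'=\gamma^{-1}(U)$; then $\alpha(U)\subseteq W'$ and $\gamma|_{W'}\circ\alpha|_U=1_U$, so $L=k(Y)=k(U)$ is retract $k$-rational.

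For part~(2), the map $\fn{Br}_{v,k}(L)\to\fn{Br}_{v,k}(L(x))$ is the restriction attached to $L\subseteq L(x)$; it is well defined because a class $\alpha\in\fn{Br}(L)$ unramified at every discrete valuation ring $R$ of $L$ over $k$ with $\fn{Frac}(R)=L$ remains unramified at every such ring $\tilde R$ of $L(x)$ over $k$: if $L\subseteq\tilde R$ this is clear, and otherwise the restriction $\tilde R\cap L$ is a discrete valuation ring of $L$ over $k$ with fraction field $L$, and the residue of $\alpha_{L(x)}$ at $\tilde R$ is the ramification index times the image of the residue of $\alpha$ at $\tilde R\cap L$ under $H^1(\kappa(\tilde R\cap L),\bm{Q}/\bm{Z})\to H^1(\kappa(\tilde R),\bm{Q}/\bm{Z})$, hence $0$. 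Injectivity is inherited from injectivity of $\fn{Br}(L)\to\fn{Br}(L(x))$. For surjectivity, let $\beta\in\fn{Br}_{v,k}(L(x))$. Every closed point $v$ of $\mathbb{A}^1_L$ yields a discrete valuation ring $\mathcal{O}_{\mathbb{A}^1_L,v}$ containing $k$ with fraction field $L(x)$, so $\beta$ is unramified there, and the Faddeev exact sequence $0\to\fn{Br}(L)\to\fn{Br}(L(x))\to\bigoplus_v H^1(L(v),\bm{Q}/\bm{Z})$ forces $\beta=\alpha_{L(x)}$ for a unique $\alpha\in\fn{Br}(L)$. To see that $\alpha\in\fn{Br}_{v,k}(L)$, given a discrete valuation ring $R$ of $L$ over $k$ with $\fn{Frac}(R)=L$, extend its valuation to the Gauss valuation on $L(x)$, whose valuation ring $\tilde R$ has residue field $\kappa(R)(x)$ and ramification index $1$ over $R$; since $\tilde R$ contains $k$ and has fraction field $L(x)$, $\beta$ is unramified at $\tilde R$, so the residue of $\alpha$ at $R$ maps to $0$ in $H^1(\kappa(R)(x),\bm{Q}/\bm{Z})$ and therefore vanishes, because $H^1(\kappa(R),\bm{Q}/\bm{Z})\to H^1(\kappa(R)(x),\bm{Q}/\bm{Z})$ is injective. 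Thus $\alpha\in\fn{Br}_{v,k}(L)$ maps to $\beta$.

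I expect the main obstacle to lie, for part~(1), in making the specialization rigorous --- that for a \emph{general} rational value $c$ the fibre $V_c$ is a dense open of $Y$ on which $\alpha$, $\beta$ and $\gamma$ restrict compatibly, together with the reduction to a finitely generated affine model --- which is precisely where the hypothesis that $k$ is infinite is used; and, for part~(2), in the two pieces of cohomological machinery, namely the Faddeev (Gersten) exact sequence for $\mathbb{A}^1_L$ and the compatibility of residue maps with scalar extension and with the Gauss valuation. One should also note the usual caveat that in characteristic $p>0$ these sequences only control the prime-to-$p$ part of the Brauer group, which is immaterial in the applications of this paper, where $\fn{Br}_{v,k}$ is always taken with $\gcd\{|G|,\fn{char}\,k\}=1$.
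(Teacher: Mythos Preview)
The paper does not prove Theorem~\ref{t3.7} at all: it is stated in Section~3 (``Some reduction theorems'') as a known result of Saltman, with citations to \cite{Sa2}, \cite{Sa5} and \cite{Ka4}, and no argument is given. So there is no ``paper's proof'' to compare against.

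Your proposal is a correct sketch of the standard proofs, and essentially reconstructs what is in the cited references. A few remarks:

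For part~(1), your specialization argument is the right idea and matches Saltman's. The only place where more care is needed is the passage you already flag: one must first replace the retract datum (given on \emph{some} affine model of $L(x)$) by one living on a dense open of $Y\times\mathbb{A}^1$; this is where the birational invariance of retract rationality is invoked, and it is indeed routine. Your final shrinking to arrange that the target $W'$ is a \emph{principal} open of $\mathbb{A}^m$ also needs a word: as written, $W'=\gamma^{-1}(U)$ is only open, but one can further shrink $U$ so that $\alpha(U)$ lands in a principal open contained in $W'$, and then restrict $\gamma$ to that principal open. None of this is a real obstacle.

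For part~(2), the argument via the Faddeev exact sequence and the Gauss extension is exactly the standard one. Your well-definedness step can in fact be done without residues: if $L\subseteq\tilde R$ then $\fn{Br}(L)\to\fn{Br}(L(x))$ factors through $\fn{Br}(\tilde R)$; otherwise $R=\tilde R\cap L$ is a discrete valuation ring of $L$ over $k$, the inclusion $R\hookrightarrow\tilde R$ is local, and a class in the image of $\fn{Br}(R)$ pushes forward to the image of $\fn{Br}(\tilde R)$. This avoids the tameness caveat at that stage, though your use of Faddeev for surjectivity still carries the usual restriction to prime-to-$p$ torsion in characteristic $p$, which, as you note, is harmless for the applications here.
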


The following is an elementary result in group theory, whose proof is omitted.

\begin{lemma} \label{l3.8}
Let $G$ be a finite $p$-group.
If $H$ is a normal subgroup of $G$ and $H\ne \{1\}$, then $H\cap Z(G)\ne \{1\}$.
\end{lemma}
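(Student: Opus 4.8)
The plan is to run the standard class-equation argument for finite $p$-groups, but relative to the normal subgroup $H$ rather than to $G$ itself. Since $H$ is normal in $G$, the group $G$ acts on the set $H$ by conjugation, $g\cdot h=ghg^{-1}$, and this restricts to an action on $H\setminus\{1\}$. Partition $H$ into $G$-orbits. Each orbit has cardinality $[G:\fn{Stab}_G(h)]$, which divides $|G|$ and is therefore a power of $p$; in particular every orbit size is either $1$ or divisible by $p$. An element $h\in H$ lies in an orbit of size $1$ exactly when $ghg^{-1}=h$ for all $g\in G$, i.e.\ exactly when $h\in Z(G)$; note that the relevant fixed-point set here is $H\cap Z(G)$ (elements centralized by all of $G$), which in general is strictly smaller than the center $Z(H)$ of $H$ — this is the one point that deserves a moment's care.

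Counting cardinalities in the orbit decomposition gives
\[
|H|=|H\cap Z(G)|+\sum_{\mathcal{O}}|\mathcal{O}|,
\]
where the sum runs over those $G$-orbits of size greater than $1$, and each such summand is divisible by $p$. Since $H\ne\{1\}$ and $H$ is a $p$-group, $p$ divides $|H|$; reducing the displayed identity modulo $p$ then forces $p\mid |H\cap Z(G)|$. But $1\in H\cap Z(G)$, so $|H\cap Z(G)|\ge p>1$, and hence $H\cap Z(G)\ne\{1\}$.

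There is essentially no obstacle to this argument — it is entirely routine — and the only subtlety, as noted, is to identify the orbit-size-$1$ elements with $H\cap Z(G)$ rather than with $Z(H)$. If one prefers to package this without writing out the orbit decomposition explicitly, the same conclusion follows by applying the usual class equation of $G$ and intersecting conjugacy classes with $H$; but the direct count above is the cleanest route, and it is the form in which I would present the proof.
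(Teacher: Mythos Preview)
Your argument is correct and is the standard one: let $G$ act on $H$ by conjugation, count orbit sizes modulo $p$, and identify the fixed points with $H\cap Z(G)$. The paper itself omits the proof entirely, calling the lemma ``an elementary result in group theory, whose proof is omitted,'' so there is nothing to compare against; your write-up is exactly the routine verification one would expect.
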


\begin{lemma} \label{l3.9}
Let $G$ be a finite $p$-group, $Z(G)$ be its center. Let $\theta:
G\to GL(W)$ be a linear representation of $G$ where $W$ is a
finite-dimensional vector space over some field $k$. Assume that,
for any $g\in Z(G) \backslash \{1\}$, $\theta(g)\ne 1$. Then
$\theta$ is a faithful representation of $G$, i.e.\ $\theta$ is
injective.
\end{lemma}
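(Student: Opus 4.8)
The plan is to use the fact that any nontrivial normal subgroup of a $p$-group meets the center nontrivially (Lemma \ref{l3.8}), applied to the kernel of $\theta$. First I would set $N = \fn{Ker}\,\theta$, which is automatically a normal subgroup of $G$. The goal is to show $N = \{1\}$. Suppose for contradiction that $N \ne \{1\}$. Then by Lemma \ref{l3.8} (with $H = N$) we have $N \cap Z(G) \ne \{1\}$, so there exists some element $g \in Z(G)$ with $g \ne 1$ and $g \in N$, i.e.\ $\theta(g) = 1$. But this directly contradicts the hypothesis that $\theta(g) \ne 1$ for every $g \in Z(G) \setminus \{1\}$. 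Hence $N = \{1\}$, which is precisely the statement that $\theta$ is injective, i.e.\ a faithful representation.

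There is essentially no obstacle here: the proof is a one-line application of Lemma \ref{l3.8} to the kernel, and the only thing to check is that $\fn{Ker}\,\theta$ is normal in $G$ (which holds for the kernel of any group homomorphism) and that $G$ is a $p$-group (which is part of the hypothesis, so Lemma \ref{l3.8} applies). The statement that $\theta$ being injective is equivalent to $\fn{Ker}\,\theta = \{1\}$ is standard. So the whole argument is a short contrapositive/contradiction chain with Lemma \ref{l3.8} as the single key input, and I expect the write-up to be only a few lines long.
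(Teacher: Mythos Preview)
Your proposal is correct and matches the paper's proof essentially line for line: set $N=\fn{Ker}(\theta)$, assume $N\ne\{1\}$, apply Lemma \ref{l3.8} to get a nontrivial $g\in N\cap Z(G)$, and derive a contradiction. The paper's write-up is indeed only a few lines, exactly as you anticipated.
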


\begin{proof}
Let $N=\fn{Ker}(\theta)$.
If $N\ne \{1\}$, then $N\cap Z(G)\ne \{1\}$ by Lemma \ref{l3.8}.
It follows that there is some $g\in Z(G)\backslash \{1\}$ with $\theta(g)=1$.
A contradiction.
\end{proof}

We recall the definitions of $G$-lattices and purely monomial actions.

\begin{defn} \label{d3.10}
Let $G$ be a finite group.
A $G$-lattice $M$ is a finitely generated $\bm{Z}[G]$-module which is $\bm{Z}$-free as an abelian group,
i.e.\ $M=\bigoplus_{1\le i\le n} \bm{Z}\cdot x_i$ with a $\bm{Z}[G]$-module structure.

If $k$ is a field and $M=\bigoplus_{1\le i\le n} \bm{Z}\cdot x_i$ is a $G$-lattice,
define $k(M)=k(x_1,\ldots,x_n)$ the rational function field over $k$ with $G$ acting by $k$-automorphisms defined as follows:
For any $\sigma\in G$, if $\sigma\cdot x_j=\sum_{1\le i\le n} a_{ij}x_i$ in $M$,
then $\sigma\cdot x_j=\prod_{1\le i\le n} x_i^{a_{ij}}$ in $k(M)$.
The action of $G$ on $k(M)$ is called a purely monomial $k$-action \cite[Definition 1.15]{HKK}.
The fixed field of $k(M)$ under the $G$-action is denoted by $k(M)^G$.
\end{defn}

\begin{theorem}[Barge \cite{Bar}] \label{t3.11}
Let $G$ be a finite group, $k$ be an algebraically closed field with $\gcd\{|G|,\fn{char}k\}=1$.
The following two statements are equivalent,

{\rm (i)} all the Sylow subgroups of $G$ are bicyclic,

{\rm (ii)} $\fn{Br}_{v,k} (k(M)^G)=0$ for all $G$-lattices $M$.
\end{theorem}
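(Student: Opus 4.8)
The plan is to deduce the equivalence from a cohomological formula for $\fn{Br}_{v,k}(k(M)^G)$ that extends the Bogomolov--Saltman description of Theorem \ref{t1.4}. Since by Definition \ref{d3.10} the field $k(M)=k(x_1,\dots,x_n)$ is $k$-rational, Theorem \ref{t3.7}(2) gives $\fn{Br}_{v,k}(k(M))=0$, so every class of $\fn{Br}_{v,k}(k(M)^G)$ dies in $\fn{Br}(k(M))$; hence (assuming harmlessly that $G$ acts faithfully on $M$, as $\fn{Br}_{v,k}(k(M)^G)$ depends only on the image of $G$) it lies in $\fn{Br}(k(M)/k(M)^G)\simeq H^2(G,k(M)^\ast)$. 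Analyzing the residues of such a class along the boundary divisors of a smooth projective $G$-equivariant compactification $X$ of the torus $\fn{Spec}\,k[M]$ — exactly as Bogomolov does for $k(G)$ — one shows that an unramified class is represented by a cocycle with values in the units $k^\ast\oplus M$ of $k[M]$, that the residue conditions at the divisors of $X/G$ are governed by the bicyclic subgroups of $G$, and hence that
\[
\fn{Br}_{v,k}(k(M)^G)=\bigcap_{A}\fn{Ker}\{\fn{res}_G^A\colon H^2(G,k^\ast\oplus M)\to H^2(A,k^\ast\oplus M)\},
\]
$A$ ranging over the bicyclic subgroups of $G$; for $M=\bm{Z}[G]$ the summand $H^2(G,M)$ vanishes and one recovers $B_0(G)\subseteq H^2(G,\mu)$. (Equivalently one may phrase this through a flasque resolution of $M$ and an $H^1$; the form above directly parallels Theorem \ref{t1.4}.)

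Given the formula, (i)$\Rightarrow$(ii) is immediate. If every Sylow subgroup of $G$ is bicyclic then, for each prime $p\mid |G|$, a Sylow $p$-subgroup $G_p$ is one of the test subgroups $A$; since $\fn{res}_G^{G_p}$ is injective on the $p$-primary part of $H^2(G,k^\ast\oplus M)$ and $\fn{Br}_{v,k}(k(M)^G)$ is $|G|$-torsion, a class in the intersection has trivial $p$-component for every $p$, hence is $0$. (In particular, when $G$ itself is bicyclic, taking $A=G$ already gives $\fn{Br}_{v,k}(k(M)^G)=0$ for every $G$-lattice $M$.)

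For (ii)$\Rightarrow$(i) I argue by contraposition: let $P$ be a Sylow $p$-subgroup of $G$ that is not bicyclic, and build a $G$-lattice $M$ with $\fn{Br}_{v,k}(k(M)^G)\ne 0$. Two soft reductions apply. First, if $P\twoheadrightarrow\bar P$ and a $\bar P$-lattice $\bar N$ carries a nonzero class of $H^2(\bar P,\bar N)$ restricting trivially to every bicyclic subgroup of $\bar P$, then, viewing $\bar N$ as a $P$-lattice through $P\twoheadrightarrow\bar P$, one has $k(\bar N)^P=k(\bar N)^{\bar P}$ and the inflated class works for $P$. Second, if a $P$-lattice $N$ carries a nonzero class $\xi\in H^2(P,N)$ restricting trivially to every bicyclic subgroup of $P$, then by Shapiro's lemma $H^2(G,\fn{Ind}_P^G N)\simeq H^2(P,N)$, and the Mackey description of the restriction of the Shapiro isomorphism shows that the image of $\xi$ in $H^2(G,\fn{Ind}_P^G N)$ restricts trivially to every bicyclic subgroup of $G$ — each such restriction being, up to conjugation, a restriction of $\xi$ to a subgroup of $P$, which is again bicyclic — so $M=\fn{Ind}_P^G N$ works by the formula. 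Since every non-bicyclic $p$-group is non-abelian or is a direct product of at least three cyclic groups, after these reductions it suffices to treat the minimal non-bicyclic $p$-groups: $(C_p)^3$ (which $P$ has as a quotient in the abelian case) and the minimal non-abelian $p$-groups (which $P$ has as a subquotient in the non-abelian case), the latter occurring in arbitrarily large order.

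The main obstacle is precisely this last construction. For the minimal non-bicyclic groups $B_0$ vanishes — they are abelian, cf.\ Theorem \ref{t3.3}, or of order $\le p^3$, cf.\ Theorem \ref{t1.6}; several of the metacyclic ones even have $k(G)$ rational by Theorem \ref{t3.6} — so the required nonzero unramified class cannot live in $H^2(\,\cdot\,,\mu)$ and must be produced in $H^2(\,\cdot\,,M)$ for a genuinely non-permutation lattice $M$; one builds $M$ from the permutation modules on the cyclic subgroups (via a Koszul-type complex, mirroring the cup-product construction of nonzero Bogomolov multipliers), and then checks, divisor by divisor, that the resulting class is unramified while restricting trivially on every cyclic and bicyclic subgroup. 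Because minimal non-bicyclic $p$-groups occur in unbounded order, this verification must be carried out uniformly in the group rather than case by case, and supplying that uniform construction of the ``bad'' multiplicative lattice is the technical heart of the argument.
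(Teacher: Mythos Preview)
The paper does not give a self-contained proof of this theorem: it simply cites Barge \cite{Bar} (noting that the argument there, written for $k=\bm{C}$, goes through verbatim over any algebraically closed field with $\gcd\{|G|,\fn{char}k\}=1$), and remarks that Saltman \cite[Theorem~12]{Sa4} yields an alternative proof of (i)$\Rightarrow$(ii). So your attempt to actually prove the statement goes well beyond what the paper does.

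Your treatment of (i)$\Rightarrow$(ii) is essentially the Saltman route the paper alludes to: identify $\fn{Br}_{v,k}(k(M)^G)$ inside $H^2(G,k(M)^\ast)$, push unramified classes into $H^2(G,k[M]^\ast)=H^2(G,k^\ast\oplus M)$, and then kill them by restricting to the (bicyclic) Sylow subgroups. That is the right picture; the delicate point you gloss over is the precise characterization of which classes in $H^2(G,k^\ast\oplus M)$ are unramified --- the intersection-over-bicyclic-subgroups description is exactly the content of \cite[Theorem~12]{Sa4}, and establishing it requires the residue analysis you only sketch. Granting that theorem, your Sylow argument is fine and matches the paper's alternative. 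Note also that only (i)$\Rightarrow$(ii) is used later (in Theorem~\ref{t4.2}), so for the paper's applications this direction is all that matters.

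For (ii)$\Rightarrow$(i) there is a genuine gap, which you yourself flag: the reductions to a Sylow subgroup and then to a minimal non-bicyclic $p$-quotient are reasonable, but the ``technical heart'' --- actually constructing, for each minimal non-bicyclic $p$-group, a lattice $M$ and a nonzero class in $H^2(\,\cdot\,,M)$ that restricts trivially to every bicyclic subgroup --- is not carried out. Your reductions also need more care: a non-abelian $p$-group need not surject onto one of the three minimal non-abelian $p$-groups of order $p^3$ (it may only have a minimal non-abelian \emph{subquotient}, and these occur in unbounded order), so the inflation step alone does not reduce the problem to finitely many cases. Barge's paper \cite{Bar} does supply the required construction; absent that, this direction remains a sketch rather than a proof.
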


\begin{proof}
In \cite{Bar}, the above theorem is proved for the case $k=\bm{C}$
but the arguments there work in the general case.

Here is an alternative proof for the direction ``(i) $\Rightarrow$
(ii)" of the above theorem: apply \cite[Theorem 12]{Sa4}.
\end{proof}

\section{\boldmath $B_0(G)=0$ for the groups not belonging to $\Phi_{6}$ and $\Phi_{10}$}

Let $p$ be an odd prime number and $G$ be a group of order $p^5$
belonging to the isoclinism family $\Phi_i$ where $1\le i\le 9$. We
will show that $B_0(G)=0$ in this section and the next section.

We adopt the classification of groups of order $p^5$ by R.\ James
\cite{Ja}. For groups of order $p^5$, there are in total 10
isoclinism families $\Phi_i$ where $1\le i\le 10$ \cite[pages~
619--621]{Ja}. When $p\ge 5$, the numbers of groups in the family
$\Phi_i$ where $1\le i\le 10$ are
\[
7,\ 15,\ 13,\ p+8,\ 2,\ p+7,\ 5,\ 1,\ \gcd\{3,p-1\}+2,\
\gcd\{4,p-1\}+\gcd\{3,p-1\}+1
\]
respectively. The same numbers hold true for groups of order $3^5$
except for $\Phi_6$ and $\Phi_{10}$. The numbers of groups of
order $3^5$ in $\Phi_6$ and $\Phi_{10}$ are 7 and 3 respectively.

We call the attention of the reader to two conventions of James's
paper \cite{Ja}. First the notation $\alpha_{i+1}^{(p)}$ is not
$\alpha_{i+1}^p$ in general; it is defined as
$\alpha_{i+1}^{(p)}=\alpha_{i+1}^p \alpha_{i+2}^{\binom{p}{2}}
\cdots \alpha_{i+k}^{\binom{p}{k}} \cdots \alpha_{i+p}$ where
$\alpha_{i+2},\ldots,\alpha_{i+p}$ are suitably defined
\cite[p.~614, lines 8--10]{Ja}. In particular, for the groups of
order $p^5$ with $p\ge 5$ defined in \cite[pages~619--621]{Ja},
$\alpha_{i+1}^{(p)}=\alpha_{i+1}^p$. On the other hand, when $p=3$,
the relations
$\alpha_1^{(3)}=\alpha_2^{(3)}=\alpha_3^{(3)}=\alpha_4^{(3)}=1$ for
the group $\Phi_9(2111)a$ in \cite[page 621]{Ja} are equivalent to
the relations $\alpha_1^3=\alpha_3^{-1}\alpha_4$,
$\alpha_2^3=\alpha_4^{-1}$ and $\alpha_3^3=\alpha_4^3=1$. The second
convention of \cite{Ja} is that all relations of the form
$[\alpha,\beta]=1$ are omitted from the list \cite[p.~614, lines
11--12]{Ja}.

\begin{theorem} \label{t4.1}
Let $p$ be an odd prime number and $G$ be a group of order $p^5$
and of exponent $e$. If $k$ is an infinite field containing a
primitive $e$-th root of unity and $G$ belongs to the isoclinism
family $\Phi_i$ where $1\le i\le 4$ or $8\le i\le 9$, then $k(G)$
is retract rational over $k$. In particular, $B_0(G)=0$.
\end{theorem}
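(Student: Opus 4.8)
\textbf{Proof proposal for Theorem \ref{t4.1}.}

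The plan is to go through the isoclinism families $\Phi_i$ with $i \in \{1,2,3,4,8,9\}$ one at a time (or a few at a time, grouping by structural similarity), using James's explicit presentations \cite{Ja}, and in each case to exhibit the group $G$ as a suitable extension to which one of the reduction theorems of Section 3 applies. The first and easiest reduction is Theorem \ref{t3.5}: if $G$ has an abelian normal subgroup $H$ with $G/H$ cyclic and $k$ contains a primitive $e$-th root of unity with $e = \exp(G)$, then $k(G)$ is retract $k$-rational (part (2)), which forces $B_0(G) = 0$ by Lemma \ref{l1.3}. So the first step is, for each of the listed families, to inspect James's presentation and locate an abelian normal subgroup of index $p$ (equivalently, of order $p^4$); a non-abelian group of order $p^5$ always has an abelian normal subgroup of order $p^3$ by Burnside's theorem, and in many of the families one can push this up to order $p^4$. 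Where that works, the case is finished immediately. Family $\Phi_8$ has a single group, so it should be disposed of by a direct check of this kind; the families $\Phi_1,\Phi_2,\Phi_3$ (which have nilpotency class $\le 3$ and relatively ``large'' abelianizations, since these are the families with $G/[G,G]$ of order $p^3$ or more) are the natural candidates for a clean application of Theorem \ref{t3.5}.

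For the families where no abelian subgroup of index $p$ is available, the fallback is a more hands-on construction: realize $k(G)$ by inducing a character. Concretely, one picks a normal subgroup $N$ with $G/N$ small — ideally abelian — writes $k(G)$ in terms of the $G/N$-action on the induced data, and then applies the linearization machinery of Theorems \ref{t3.1} and \ref{t3.2} to strip off variables on which $G$ acts affinely, reducing to $k'(G/N)$ for a suitable extension $k'$ of $k$, which is then handled by Fischer's theorem (Theorem \ref{t3.3}) together with Theorem \ref{t3.4} and the retract-rationality stability under adjoining one variable (Theorem \ref{t3.7}(1)). For family $\Phi_9$, which contains $\gcd\{3,p-1\}+2$ groups and has $G/[G,G] \simeq C_p \times C_p$, one should check whether the James relations put $G$ into the shape ``abelian normal $\rtimes$ cyclic'' after all — if the commutator subgroup is cyclic or has a cyclic complement modulo the center this is plausible — and if not, fall back to Theorem \ref{t3.6} when $G$ happens to have a cyclic subgroup of index $p^2$, or to the induced-character argument otherwise. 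Throughout, since we only need retract rationality (not rationality), we are free to enlarge $k$ by one variable at each step via Theorem \ref{t3.7}(1), which makes the affine-linearization reductions lossless.

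The main obstacle I expect is family $\Phi_4$ (which has $p+8$ members, the count growing with $p$), together with the ``large'' orbit of groups $\Phi_9(2111)a$ etc. at $p = 3$ where James's exponent-$p$ power relations behave differently (the $\alpha^{(p)}$ versus $\alpha^p$ subtlety flagged in the text). For $\Phi_4$ one cannot treat the infinitely many isomorphism types by brute enumeration, so the argument must be uniform in the parameter: I would look for a single normal series valid for the whole family — presumably $Z(G) \lhd H \lhd G$ with $H$ abelian of order $p^4$, or failing that $G/N$ a fixed non-abelian group of order $p^3$ for all members — and carry out one linearization argument with the structure constants left symbolic. Verifying that the chosen abelian subgroup is genuinely normal and that $G/H$ is cyclic for \emph{every} value of the family parameter, using James's commutator and power relations, is the delicate bookkeeping step; once that is in place, Theorem \ref{t3.5}(2) and Lemma \ref{l1.3} close the argument uniformly. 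The remaining families reduce to routine verification along the same lines.
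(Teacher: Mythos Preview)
Your primary plan --- find an abelian normal subgroup $H$ with $G/H$ cyclic and invoke Theorem \ref{t3.5}(2) --- is exactly what the paper does, and it succeeds for \emph{every} family in the list; the elaborate fallback machinery (induced characters, Theorems \ref{t3.1}--\ref{t3.2}, linearization) is never needed here. In particular your worries about $\Phi_4$ and $\Phi_9$ are misplaced. For $\Phi_9$, James's relations include $[f_i,f_j]=1$ for $1\le i,j\le 4$, so $H=\langle f_1,f_2,f_3,f_4\rangle$ is visibly abelian of index $p$; for $\Phi_3$ and $\Phi_4$ the paper either checks the list directly or appeals to Bender's structural observation that a group of order $p^5$ with $|Z(G)|=p^2$ and $|[G,G]|\le p^2$ has an abelian normal subgroup of index $p$, which handles the whole family uniformly regardless of the parameter. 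For $\Phi_8$ the unique group is $C_{p^3}\rtimes C_{p^2}$, so $H=C_{p^3}$ already gives cyclic quotient (the paper actually cites Theorem \ref{t3.6} here to upgrade to full rationality, but for retract rationality Theorem \ref{t3.5} suffices). So your outline is correct; just drop the contingency planning and carry out the direct check in each case.
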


\begin{proof}
If $G$ belongs to the isoclinism family $\Phi_i$ where $1\le i\le 4$
or $8\le i\le 9$, it is not difficult (from the list of
\cite[pages~619--621]{Ja}) to find an abelian normal subgroup $H$
such that $G/H$ is cyclic. Thus $k(G)$ is retract $k$-rational and
$B_0(G)=0$ by Theorem \ref{t3.5}. But we can say more about $k(G)$.

\bigskip
Step 1.
The groups in $\Phi_1$ are abelian groups.
If $G\in \Phi_1$, then $k(G)$ is $k$-rational by Theorem \ref{t3.3}.

\bigskip
Step 2. Some groups in $\Phi_2$ are direct products. If $G\in
\Phi_2$ and $G\simeq G_1\times G_2$ with $|G_1|,|G_2|<|G|$, then
both $k(G_1)$ and $k(G_2)$ are $k$-rational by Theorem \ref{t1.6}.
Thus $k(G)$ is $k$-rational by Theorem \ref{t3.4}.

For the other groups $G\in \Phi_2$, it is easy to verify that
$G/Z(G)\simeq C_p\times C_p$. Let $\bar{g}$ be an element of order
$p$ in $G/Z(G)$ and $g$ be a preimage of $\bar{g}$ in $G$. Then
$H=\langle Z(G),g\rangle$ is abelian and normal in $G$ with
$G/H\simeq C_p$. By Theorem \ref{t3.5}, $k(G)$ is retract
$k$-rational.

\bigskip
Step 3. If $G$ belongs to $\Phi_3$ or $\Phi_4$, it is not difficult
to show that $G$ contains an abelian normal subgroup of index $p$ by
checking the list provided in \cite[page~620]{Ja}.

Alternatively, we may use the fact asserted in Bender's paper
\cite[p.69]{Be}: If $G$ is a group of order $p^5$ (where $p\ge 3$)
with $|Z(G)|=p^2$ and $|[G,G]|\le p^2$, then $G$ contains an
abelian normal subgroup of index $p$. Assuming this fact, since
$|Z(G)|=|[G,G]|=p^2$ (if $G\in \Phi_3$ and $G$ is not a direct
product) and $|Z(G)|=|[G,G]|=p^2$ (if $G\in \Phi_4$), we are done.

In either case, apply Theorem \ref{t3.5}. We find that $k(G)$ is
retract $k$-rational.

\bigskip
Step 4. If $G\in \Phi_8$, the family $\Phi_8$ consists of only one
group $G\simeq C_{p^3} \rtimes C_{p^2}$. Apply Theorem \ref{t3.6}.
We find $k(G)$ is $k$-rational.

\bigskip
Step 5. If $G\in \Phi_9$, check the list of the generators and
relations of these groups in \cite[p.621]{Ja}. We find that these
groups $G$ are generated by elements $f_0$, $f_1$, $f_2$, $f_3$,
$f_4$ and, besides other relations, they satisfy the relations
\begin{align*}
& \begin{alignedat}{2}
[f_i,f_0]&=f_{i+1} &\text{ for }& 1\le i\le 3, \\
[f_i,f_j]&=1 & \text{ for } &1\le i,j\le 4, \text{ and}
\end{alignedat} \\
& \{f_1,f_2,f_3,f_4\} \text{ generates a subgroup of index }p.
\end{align*}

Define $H=\langle f_1,f_2,f_3,f_4\rangle$.
It follows that $H$ is an abelian normal subgroup of index $p$.
Apply Theorem \ref{t3.5}.
\end{proof}

\begin{theorem} \label{t4.2}
Let $G=A\rtimes G_0$ be a finite group where $A$ and $G_0$ are
subgroups of $G$ such that {\rm (i)} $A$ is an abelian normal
subgroup of $G$ with $G_0$ acting on $A$, and {\rm (ii)} $G_0$ is
bicyclic. If $k$ is an algebraically closed field with
$\gcd\{|G|,\fn{char}k\}=1$, then $\fn{Br}_{v,k} (k(G))=0$.
\end{theorem}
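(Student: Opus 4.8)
The plan is to realize $k(G)$ as the function field of a suitable linear action and then reduce to a situation already covered by Theorem \ref{t3.11} (Barge's theorem) or by Theorem \ref{t3.5}. First I would recall that $k(G) = k(V)^G$ where $V$ is the regular representation of $G$ over $k$; since $k$ is algebraically closed and $\gcd\{|G|,\fn{char}k\}=1$, the $G$-action is diagonalizable after an appropriate choice of coordinates, but more importantly the key input is that $G = A \rtimes G_0$. The standard move (as in Saltman's and Bogomolov's work, compatible with Theorem \ref{t1.4}) is to use that $\fn{Br}_{v,k}(k(G)) = B_0(G)$, so it suffices to prove $B_0(G)=0$. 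Because $G_0$ is bicyclic, every subgroup of $G_0$ is bicyclic, and the semidirect product structure gives us good control over the bicyclic subgroups of $G$ that we must restrict to in the definition of $B_0(G)$.

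The main step is an inflation-restriction argument along the normal abelian subgroup $A$. Consider the Hochschild--Serre spectral sequence for $1 \to A \to G \to G_0 \to 1$ with coefficients in $\mu$ (or $\bm{Q}/\bm{Z}$), and the associated five-term exact sequence. Since $A$ is abelian and $G_0$ is bicyclic, one knows $H^2(G_0,\mu)$ is cyclic or a product of two cyclic groups, and $H^2(A,\mu)^{G_0}$ together with the differentials can be analyzed. The crux is: given a class $\beta \in B_0(G) \subset H^2(G,\mu)$, I want to show $\beta = 0$. The restriction $\fn{res}^G_A \beta \in H^2(A,\mu)$; since $A$ itself is abelian, $H^2(A,\mu) = \bigwedge^2 \widehat{A}$ and $\beta|_A$ must vanish on every bicyclic subgroup of $A$ — but every subgroup of an abelian group generated by two elements is bicyclic, so in fact $\beta|_A$ restricts trivially to every $2$-generated subgroup of $A$, which forces $\beta|_A = 0$ (a class in $\bigwedge^2\widehat{A}$ that dies on all rank-$2$ subgroups is zero). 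Hence $\beta$ comes, via inflation, from a class $\bar\beta \in H^2(G_0,\mu)$ (using that the relevant differential $d_2: H^1(A,\mu)^{G_0} \to H^2(G_0,\mu)$ may have nonzero image, so one actually gets $\beta$ inflated from $H^2(G_0,\mu)/\fn{Image}(d_2)$). Then for $\bar\beta$ one uses that $G_0$ itself is a bicyclic subgroup of $G$ (via the splitting), and $B_0$-classes restrict to zero on bicyclic subgroups; tracing through the inflation-restriction compatibility, $\fn{res}^G_{G_0}(\beta) = \bar\beta$ up to the image of $d_2$, which is zero, so $\bar\beta$ lies in the image of $d_2$ and therefore $\beta$ is inflated from the trivial class, i.e.\ $\beta = 0$.

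I expect the main obstacle to be the bookkeeping in the five-term sequence: precisely controlling the transgression/differential $d_2: H^1(A,\mu)^{G_0} \to H^2(G_0,\mu)$ and verifying that the $B_0$-vanishing conditions (restriction to all bicyclic subgroups of $G$) are strong enough to kill both the $H^2(A,\mu)$-part and the $H^2(G_0,\mu)$-part of a class in $B_0(G)$. A subtlety is that a bicyclic subgroup $A' \le G$ need not be contained in $A$ nor conjugate into $G_0$; it will have the form (something in $A$) $\rtimes$ (something cyclic in $G_0$), and one must check that restriction of $\fn{inf}^G_{G_0}\bar\beta$ to such an $A'$ factors through $H^2$ of a cyclic group (namely $A'G_0$-type quotient), which vanishes — this is exactly the mechanism used in Lemma \ref{l2.1}, and I would quote or mimic that argument. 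Alternatively, and perhaps more cleanly, one may invoke Theorem \ref{t3.11}: realize $k(G)$ as $k(M)^G$ for a lattice $M$ built from $A$ together with a permutation-type lattice for $G_0$; since $A$ is abelian and $G_0$ is bicyclic, with some care the Sylow subgroups of the acting group in the relevant reduction become bicyclic, giving $\fn{Br}_{v,k}=0$ directly. Deciding which of these two routes gives the shortest rigorous argument is the real work; I would lead with the spectral-sequence approach since it mirrors Lemma \ref{l2.1} most closely.
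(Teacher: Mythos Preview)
Your spectral-sequence argument has a real gap at the step ``hence $\beta$ comes, via inflation, from a class $\bar\beta \in H^2(G_0,\bm{Q}/\bm{Z})$''. The five-term sequence only tells you that the image of inflation lies in $H^2(G,\bm{Q}/\bm{Z})_1=\ker(\fn{res}^G_A)$; it does \emph{not} say this inclusion is an equality. In the Hochschild--Serre filtration on $H^2(G)$, the condition $\beta|_A=0$ only places $\beta$ in $F^1H^2$, and the graded piece $F^1H^2/F^2H^2=E_\infty^{1,1}$ is a subquotient of $H^1(G_0,H^1(A,\bm{Q}/\bm{Z}))$, which you never address. Adding the information $\beta|_{G_0}=0$ (valid, since the extension splits and $G_0$ is bicyclic) kills the $E^{2,0}$ contribution but still leaves $E^{1,1}$ untouched. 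A concrete illustration of the failure: take $G=A\times G_0$ with $A=\langle a_1,a_2\rangle\simeq C_p^2$ and $G_0=\langle b\rangle\simeq C_p$; the class $\beta=\chi_{a_1}\cup\chi_b\in H^2(G,\bm{Q}/\bm{Z})$ satisfies $\beta|_A=0$ and $\beta|_{G_0}=0$ yet $\beta\ne 0$. Your argument, which uses only those two restrictions, would wrongly conclude $\beta=0$. Of course this $\beta\notin B_0(G)$, being detected on $\langle a_1,b\rangle$, so the theorem survives; the point is that a correct proof must actually exploit the ``mixed'' bicyclic subgroups you flag as a subtlety, and you have not shown how to do that.

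The paper follows your alternative route and makes it concrete. One chooses coordinates $y_i(g)$ (with $1\le i\le |A|$, $g\in G_0$) on the regular representation so that $A$ acts on each $y_i(g)$ by a character and $G_0$ permutes the $g$-index; then $k(y_i(g):i,g)^A=k(M)$ for an explicit $G_0$-lattice $M$ (the kernel of a character map into a product of copies of $\langle\zeta\rangle$), and $k(G)=k(M)^{G_0}$ with a purely monomial $G_0$-action in the sense of Definition~\ref{d3.10}. Since the acting group is $G_0$ itself, which is bicyclic, all its Sylow subgroups are bicyclic and Theorem~\ref{t3.11} gives $\fn{Br}_{v,k}(k(M)^{G_0})=0$ directly. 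No spectral-sequence bookkeeping is needed; you should lead with this argument and supply the lattice construction rather than the cohomological one.
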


\begin{proof}
Step 1. Let $V= \bigoplus_{g \in G} k \cdot x(g)$ with the
$G$-action defined by $g \cdot x(h)=x(gh)$ for any $g,h \in G$.
Then $k(G)=k(x(g): g \in G)^G$ by definition.

Consider a subspace $W= \bigoplus_{\tau \in A} k \cdot x(\tau)$.
Since $A$ is abelian, the action of $A$ on $W$ can be
diagonalized. Explicitly, there is a linear change of variables of
$W$ with $W=\bigoplus_{1\le i\le n} k\cdot x_i$ (where $n=| A |$)
such that, for all $\tau\in A$, $\tau\cdot x_i \in k\cdot x_i$ for
$1\le i\le n$. Thus we may write $\tau\cdot x_i=\chi_i(\tau)x_i$
where $\chi_i: A\to k^{\times}$ is a linear character of $A$.

For any $h \in G_0$, define $W(h)=\bigoplus_{\tau \in A} k \cdot
x(h\tau)$. Since $x(h \tau)=h \cdot x(\tau)$, we find that
$W(h)=h(W)=h(\bigoplus_{1\le i\le n} k\cdot x_i)=\bigoplus_{1 \le
i \le n} k \cdot (h \cdot x_i)$. Note that $\tau\cdot (h \cdot
x_i)= h(h^{-1}\tau h) \cdot x_i=\chi_i(h^{-1}\tau h) (h \cdot
x_i)$ for any $\tau \in A$, any $h \in G_0$.

Write $y_i(g)=g \cdot x_i$ for any $g \in G_0$, any $1 \le i \le
n$. It follows that $k(x(g): g \in G)=k(y_i(g): 1\le i\le n, g\in
G_0)$. The action of $G$ on $y_i(g)$ is given as follows: For all
$\tau\in A$, $g,h\in G_0$, $1\le i\le n$, we have
\[
\tau\cdot y_i(g)=\chi_i(g^{-1}\tau g) y_i (g),\quad h\cdot
y_i(g)=y_i(hg).
\]

It remains to show that $\fn{Br}_{v,k}(k(y_i(g):1\le i\le n, g\in
G_0)^G)=0$.

\bigskip
Step 2. Define a $G_0$-lattice $N=\bigoplus_{g\in G_0,1\le i\le n}
\bm{Z}\cdot y_i(g)$ with
\[
h\cdot y_i(g)=y_i(hg)
\]
for any $h,g\in G_0$.

Let us choose $\tau_1,\ldots,\tau_m \in A$ such that $A=\langle
\tau_1,\ldots,\tau_m\rangle$. Let $\zeta$ be a root of unity such
that $\langle \chi_i(\tau):\tau\in A,1\le i\le
n\rangle=\langle\zeta\rangle$. Regard
$\langle\zeta\rangle^m:=\langle\zeta\rangle \times \cdots \times
\langle\zeta\rangle$ (the direct product of $m$ copies of
$\langle\zeta\rangle$) as a $\bm{Z}[G_0]$-module where the action of
$G_0$ is trivial. Define a morphism $\Phi\colon N\to
\langle\zeta\rangle^m$ of $\bm{Z}[G_0]$-modules by $\Phi
\bigl(\sum_{g\in G_0,1\le i\le n} a_{i,g} y_i(g) \bigr)
=\left(\frac{\tau_1(Y)}{Y},\frac{\tau_2(Y)}{Y},\ldots,\frac{\tau_m(Y)}{Y}\right)$
where $Y=\prod_{g\in G_0,1\le i\le n} y_i(g)^{a_{i,g}}$ $\in
k(y_i(g):1\le i\le n, g\in G_0)$.

Define $M=\fn{Ker}(\Phi)$.  Clearly $M$ is a $G_0$-lattice.

It is easy to see that $k(y_i(g):1\le i\le n, g\in G_0)^A=k(M)$,
i.e.\ if $M=\bigoplus_{1\le l\le e} \bm{Z} \cdot z_l$, then
$k(y_i(g):1\le i\le n, g\in G_0)^A=k(z_1,z_2,\ldots,z_e)$ where
each $z_l$ is a monomial in $y_i(g)$'s.

Moreover, $k(y_i(g): 1\le i\le n, g\in G_0)^G=\{k(y_i(g):1\le i\le
n, g\in G_0)^A\}^{G_0}=k(M)^{G_0}$. The group $G_0$ acts on $k(M)$
by purely monomial $k$-automorphisms (see Definition \ref{d3.10}).
Applying Theorem \ref{t3.11}, we find that
$\fn{Br}_{v,k}(k(M)^{G_0})=0$. Hence the result.
\end{proof}

\begin{remark}
Saltman shows that, if $G=A\rtimes G_0$ where $A$ is abelian
normal such that (i) $\gcd\{|A|,|G_0|\}=1$, and (ii) both $k(A)$
and $k(G_0)$ are retract $k$-rational, then $k(G)$ is also retract
$k$-rational \cite[Theorem 3.5; Ka4, Theorem 3.5]{Sa1}.
\end{remark}

Now we turn to groups belonging to the isoclinism family $\Phi_5$ for groups of order $p^5$.

\begin{defn} \label{d4.3}
Let $p$ be an odd prime number.
The isoclinism family $\Phi_5$ for groups of order $p^5$ consists of two groups: $\Phi_5(2111)$ and $\Phi_5(1^5)$ (see \cite[page 620]{Ja}).
These two groups are defined as follows.

For $G=\Phi_5(2111)$, $G=\langle f_i:1\le i\le 5\rangle$ with $Z(G)=\langle f_5\rangle$ and relations
\begin{gather*}
[f_1,f_2]=[f_3,f_4]=f_5,\ [f_1,f_3]=[f_2,f_3]=[f_1,f_4]=[f_2,f_4]=1, \\
f_1^p=f_5,\ f_i^p=1 \text{ for } 2\le i\le 5.
\end{gather*}

For $G=\Phi_5(1^5)$, $G=\langle f_i:1\le i\le 5\rangle$ with $Z(G)=\langle f_5\rangle$ and relations
\begin{gather*}
[f_1,f_2]=[f_3,f_4]=f_5,\ [f_1,f_3]=[f_2,f_3]=[f_1,f_4]=[f_2,f_4]=1,\\
f_i^p=1 \text{ for } 1\le i\le 5.
\end{gather*}
\end{defn}

Note that both $\Phi_5(2111)$ and $\Phi_5(1^5)$ are extra-special
$p$-groups.

\begin{theorem} \label{t4.4}
Let $p$ be an odd prime number and $G$ belong to the isoclinism
family $\Phi_5$ for groups of order $p^5$. Then $B_0(G)=0$.
\end{theorem}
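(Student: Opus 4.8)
\textbf{Proof proposal for Theorem \ref{t4.4}.}

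The plan is to realize the two extra-special groups in $\Phi_5$ as semidirect products $A \rtimes G_0$ with $A$ abelian normal and $G_0$ bicyclic, and then invoke Theorem \ref{t4.2}. Looking at the presentations in Definition \ref{d4.3}, the relations say that $f_1, f_2$ commute with $f_3, f_4$, so the obvious candidate is to take $A = \langle f_3, f_4, f_5 \rangle$ (which is abelian since $[f_3,f_4]=f_5$ is central and $f_5$ is central — wait, this makes $A$ the Heisenberg group, not abelian). So instead I would split the generators the other way: since $[f_1,f_2]=f_5$ and $[f_3,f_4]=f_5$ are the only nontrivial commutators, a cleaner choice is $A = \langle f_2, f_3, f_4, f_5 \rangle$. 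Here $f_2$ commutes with everything except via $[f_1,f_2]=f_5$, so within $A$ we have $[f_2,f_3]=[f_2,f_4]=1$, $[f_3,f_4]=f_5 \in Z$, which again is not abelian. The right move is $A = \langle f_2, f_3, f_5 \rangle$ or better yet to pick a maximal abelian normal subgroup: in an extra-special group of order $p^5$, a maximal abelian normal subgroup has order $p^3$. Concretely take $A = \langle f_1 f_3^{?}, \dots\rangle$ — rather than guess, I would systematically locate an abelian normal subgroup $A$ of order $p^3$ containing $Z(G) = \langle f_5 \rangle$, such that $G/A \simeq C_p \times C_p$ and the extension splits.

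More carefully: the first step is to exhibit, for each of $\Phi_5(2111)$ and $\Phi_5(1^5)$, an abelian normal subgroup $A \trianglelefteq G$ and a bicyclic complement $G_0$ with $G = A \rtimes G_0$. For $\Phi_5(1^5)$ of exponent $p$ this should be $A \simeq C_p^3$ and $G_0 \simeq C_p \times C_p$; a candidate is $A = \langle f_2, f_3, f_5\rangle$ after replacing $f_3$ by something commuting with $f_2$ — since $[f_2,f_3]=1$ already, $A = \langle f_2, f_3, f_5 \rangle$ is abelian of order $p^3$ (all three commute with each other), it contains the center, and it is normal because conjugation by $f_1$ sends $f_2 \mapsto f_2 f_5^{\pm 1}$ and fixes $f_3, f_5$, while conjugation by $f_4$ sends $f_3 \mapsto f_3 f_5^{\pm 1}$ and fixes $f_2, f_5$. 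Then $G_0 = \langle f_1, f_4 \rangle$; since $[f_1,f_4]=1$, $G_0 \simeq C_p \times C_p$ is bicyclic, and $G_0 \cap A = \{1\}$ by comparing the images in $G/Z(G) \simeq C_p^4$. For $\Phi_5(2111)$ the relation $f_1^p = f_5$ forces $f_1$ to have order $p^2$, so one instead takes, say, $A = \langle f_1, f_3, f_5 \rangle$ (with $[f_1,f_3]=1$, abelian of order $p^3$, containing $f_5 = f_1^p$ so really $A = \langle f_1, f_3\rangle \simeq C_{p^2} \times C_p$) and $G_0 = \langle f_2, f_4\rangle \simeq C_p \times C_p$, again bicyclic; normality of $A$ and triviality of $A \cap G_0$ are checked the same way using the commutator relations.

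The second step is purely an application: once $G = A \rtimes G_0$ with $A$ abelian normal and $G_0$ bicyclic is established, Theorem \ref{t4.2} (with $k$ algebraically closed, $\gcd\{|G|,\fn{char}k\}=1$) gives $\fn{Br}_{v,k}(k(G)) = 0$, and by Theorem \ref{t1.4} this is exactly $B_0(G) = 0$. The main obstacle I anticipate is not conceptual but bookkeeping: verifying that the chosen $A$ is genuinely normal and that $G_0$ really is a complement (i.e.\ $A \cap G_0 = \{1\}$ and $AG_0 = G$), which requires carefully tracking the conjugation action dictated by the commutator relations in Definition \ref{d4.3} and being careful in the $\Phi_5(2111)$ case where $f_1^p = f_5 \neq 1$ changes which subgroups are abelian. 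One should double-check that for every odd $p$ (including $p = 3$, where the group list for $\Phi_5$ is unchanged — the count is $2$ in all cases) the same splitting works; since the presentations of $\Phi_5(2111)$ and $\Phi_5(1^5)$ are uniform in $p$, a single argument covers all $p$. An alternative, if a clean semidirect decomposition proves awkward for $\Phi_5(2111)$, is to note that any extra-special $p$-group of exponent $p^2$ is already covered by earlier rationality results, or to fall back directly on the bicyclic-Sylow criterion, but I expect the semidirect product route via Theorem \ref{t4.2} to be the shortest.
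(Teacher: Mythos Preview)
Your approach is correct and is exactly the one the paper takes: apply Theorem~\ref{t4.2} after writing $G = A \rtimes G_0$ with $A$ abelian normal and $G_0$ bicyclic. The paper is slightly more economical in that it uses the single uniform choice $A = \langle f_1, f_3, f_5 \rangle$ and $G_0 = \langle f_2, f_4 \rangle$ for both $\Phi_5(2111)$ and $\Phi_5(1^5)$ (your worry about $f_1^p = f_5$ is unfounded, since $f_5 \in A$ either way), so there is no need to split into cases.
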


\begin{proof}
Choose an algebraically closed field $k$ with $\fn{char}k\ne p$ (in particular, we may choose $k=\bm{C}$).
If $\fn{Br}_{v,k}(k(G))=0$, then $B_0(G)=0$ by Theorem \ref{t1.4}.
Hence we will show that $\fn{Br}_{v,k}(k(G))=0$ by using Theorem \ref{t4.2}.

For $G=\Phi_5(2111)$ or $\Phi_5(1^5)$, write $G=A\rtimes G_0$ where
$A=\langle f_1,f_3,f_5\rangle$ and $G_0=\langle f_2,f_4\rangle$.
Conditions (i), (ii), (iii) in Theorem \ref{t4.2} are satisfied.
Hence we may apply Theorem \ref{t4.2}. Done.
\end{proof}

Now we consider groups in the isoclinism family $\Phi_7$. Since
the relations for $p=3$ and $p \ge 5$ are not the same (due to the
notation $\alpha_1^{(p)}=1$), we define these groups separately.

\begin{defn} \label{d4.5}
Let $p$ be a prime number  and $p\ge 5$. The isoclinism family
$\Phi_7$ for groups of order $p^5$ consists of five groups:
$G=\Phi_7(2111)a$, $\Phi_7(2111)b_1$, $\Phi_7(2111)b_\nu$ (where
$2\le \nu\le p-1$ and $\nu$ is a fixed quadratic non-residue modulo
$p$), $\Phi_7(2111)c$ and $\Phi_7(1^5)$ (see \cite[page~621]{Ja}).
These groups $G$ are defined by $G=\langle f_i: 0\le i\le 4\rangle$
with $Z(G)=\langle f_3\rangle$, common relations
\begin{gather*}
[f_1,f_0]=f_2,\ [f_2,f_0]=[f_1,f_4]=f_3,\ [f_4,f_0]=[f_2,f_1]=[f_4,f_2]=1,
\end{gather*}
but with extra relations
\leftmargini=8ex
\begin{enumerate}
\item[(1)]
for $G=\Phi_7(2111)a: f_0^p=f_3$, $f_i^p=1$ for $1\le i\le 4$;
\item[(2)]
for $G=\Phi_7(2111)b_1: f_1^p=f_3,\ f_0^p=f_i^p=1$ for $2\le i\le 4$;
\item[(3)]
for $G=\Phi_7(2111)b_\nu: f_1^p=f_3^\nu$, $f_0^p=f_i^p=1$ for $2\le i\le 4$;
\item[(4)]
for $G=\Phi_7(2111)c: f_4^p=f_3$, $f_i^p=1$ for $0\le i\le 3$;
\item[(5)]
for $G=\Phi_7(1^5): f_i^p=1$ for $0\le i\le 4$.
\end{enumerate}
\end{defn}

\begin{theorem} \label{t4.6}
Let $p$ be a prime number and $p\ge 5$. If $G$ belongs to the
isoclinism family $\Phi_7$ for groups of order $p^5$, then
$B_0(G)=0$.
\end{theorem}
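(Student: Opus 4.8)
I would run through the five groups of $\Phi_7$ listed in Definition \ref{d4.5} and treat them in two batches. For the three groups with $f_0^p=f_4^p=1$, namely $\Phi_7(1^5)$, $\Phi_7(2111)b_1$ and $\Phi_7(2111)b_\nu$, I would put $A=\langle f_1,f_2,f_3\rangle$ and $G_0=\langle f_0,f_4\rangle$ and apply Theorem \ref{t4.2}. The common relations $[f_1,f_0]=f_2$, $[f_2,f_0]=f_3$, $[f_1,f_4]=f_3$ together with $f_3\in Z(G)$ make $A$ abelian and normal; $[f_4,f_0]=1$ with $f_0^p=f_4^p=1$ makes $G_0\simeq C_p\times C_p$ bicyclic; and since $G/A=\langle\bar f_0,\bar f_4\rangle$ has order $p^2=|G_0|$, the map $G_0\to G/A$ is an isomorphism, so $A\cap G_0=\{1\}$, $AG_0=G$ and $G=A\rtimes G_0$. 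Taking $k$ algebraically closed with $\fn{char}k\neq p$, Theorem \ref{t4.2} gives $\fn{Br}_{v,k}(k(G))=0$, hence $B_0(G)=0$ by Theorem \ref{t1.4}.

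\textbf{The hard cases.} The two remaining groups, $\Phi_7(2111)a$ (with $f_0^p=f_3$) and $\Phi_7(2111)c$ (with $f_4^p=f_3$), are the crux, and for these no abelian normal subgroup has a bicyclic complement. The reason: for $p\geq 5$ the $p$-th power map $G\to\langle f_3\rangle=Z(G)$ is a homomorphism (the subgroups $[G,G]$ and $[[G,G],G]$ have exponent $p$ and the binomial coefficients $\binom p2,\binom p3$ vanish mod $p$), so its kernel $\Omega$ is a normal subgroup of index $p$ consisting of all elements of order dividing $p$; a bicyclic --- hence abelian, hence exponent-$p$ --- complement $G_0$ to an abelian normal $A$ would then have to lie inside $\Omega$ while meeting $A$ trivially, which is impossible since $|A|\geq p^3$, $|\Omega|=p^4$ and $|G|=p^5$. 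So Theorem \ref{t4.2} is unavailable here and a direct argument is needed. The plan is to exploit $\Omega$ itself: for $\Phi_7(2111)a$ it is $\langle f_1,f_2,f_3,f_4\rangle$ and for $\Phi_7(2111)c$ it is $\langle f_0,f_1,f_2,f_3\rangle$, and in both cases $|\Omega|=p^4$, so $B_0(\Omega)=0$ and $k(\Omega)$ is $k$-rational for $k$ with enough roots of unity (Theorem \ref{t1.6}).

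\textbf{Going up over $\Omega$.} On this basis I would carry out a ``going up along $C_p\simeq G/\Omega$'' argument: pick a faithful linear representation of $G$ --- one exists by Lemma \ref{l3.9}, applied to a representation on which the central element $f_3$ acts by a nontrivial scalar --- decompose the associated function field relative to the index-$p$ normal subgroup $\Omega$, and strip off the residual $C_p$-action with Theorems \ref{t3.1} and \ref{t3.2}, concluding that $k(G)$ is (retract) $k$-rational and hence $B_0(G)=0$. A purely cohomological alternative is to compute the Schur multiplier $H_2(G)\simeq H^2(G,\bm{Q}/\bm{Z})$ of these two groups and check that it is spanned by the images of $H_2(A)$ as $A$ runs over the bicyclic subgroups of $G$, which by Theorem \ref{t1.4} is equivalent to $B_0(G)=0$; in that approach one would first use $B_0(\Omega)=0$ to squeeze $B_0(G)$ into a subgroup of $H^1(C_p,H^1(\Omega))$ and then dispose of the bicyclic subgroups not contained in $\Omega$.

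\textbf{Main obstacle.} I expect the main obstacle to be precisely this pair $\Phi_7(2111)a$, $\Phi_7(2111)c$: the other three groups reduce immediately to Theorem \ref{t4.2}, but for these two the absence of a bicyclic complement forces hands-on work, and the delicate point is the bookkeeping with the commutator relations and the $p$-th power map when descending over $\Omega$ (or, in the cohomological variant, in pinning down the images of the multipliers of the bicyclic subgroups inside $H_2(G)$).
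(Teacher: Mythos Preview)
Your batching is sound, and your observation about the two ``hard'' groups is in fact \emph{sharper} than the paper. The paper does not split the family into easy and hard cases at all: it asserts that \emph{all five} groups admit a decomposition $G=A\rtimes G_0$ with $A$ abelian normal and $G_0$ bicyclic, and then invokes Theorem~\ref{t4.2} uniformly. For $\Phi_7(2111)b_1$ and $\Phi_7(2111)b_\nu$ the paper takes $A=\langle f_1,f_2\rangle$, $G_0=\langle f_0,f_4\rangle$, which (since $f_1^p=f_3$) coincides with your $A=\langle f_1,f_2,f_3\rangle$; for $\Phi_7(1^5)$ it takes $A=\langle f_0,f_3,f_4\rangle$, $G_0=\langle f_1,f_2\rangle$; and for $\Phi_7(2111)a$, $\Phi_7(2111)c$ it takes $A=\langle f_0,f_4\rangle$, $G_0=\langle f_1,f_2\rangle$. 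But in the last three choices the stated $A$ is \emph{not normal}: conjugating $f_0$ by $f_1$ gives $f_1 f_0 f_1^{-1}=f_0 f_2$, and $f_2\notin A$ in each case. Your choice $A=\langle f_1,f_2,f_3\rangle$, $G_0=\langle f_0,f_4\rangle$ repairs the case $\Phi_7(1^5)$, and your impossibility argument for $\Phi_7(2111)a$ and $\Phi_7(2111)c$ is correct (with one small gloss: ``bicyclic $\Rightarrow$ exponent $p$'' is not automatic, but here any complement to an abelian normal $A\supseteq[G,G]$ embeds in $G/[G,G]\simeq C_p^3$, which forces exponent $p$).

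So for the two groups you flag as hard, the paper's one-line reduction to Theorem~\ref{t4.2} does not go through, and your assessment that some hands-on work is required is right. That said, your proposals for handling them remain sketches rather than arguments. The ``going up over $\Omega$'' plan --- build a faithful module, descend to $k(\Omega)$, then peel off the residual $C_p$ via Theorems~\ref{t3.1} and~\ref{t3.2} --- is a reasonable template (it is exactly the manoeuvre the paper carries out for $G(3^5,59)$ in Theorem~\ref{t4.8} and for the family $\Phi_6$ in Theorem~\ref{t5.3}), but you would need to actually produce the representation, write down the monomial variables, and exhibit the isomorphism with a field already known to have trivial $B_0$. The cohomological alternative likewise needs a concrete computation of $H^2(G,\bm{Q}/\bm{Z})$ and its bicyclic images. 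Either route is workable; neither is yet a proof.
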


\begin{proof}
The proof is similar to that of Theorem \ref{t4.4} by applying
Theorem \ref{t4.2}. We write $G=A\rtimes G_0$ for suitable
subgroups $A$ and $G_0$. Here are the subgroups we choose.

If $G=\Phi_7(2111)a$ or $\Phi_7(2111)c$, $A=\langle f_0,f_4\rangle$, $G_0=\langle f_1,f_2\rangle$.

If $G=\Phi_7(2111)b_1$ or $\Phi_7(2111)b_\nu$, $A=\langle f_1,f_2\rangle$, $G_0=\langle f_0,f_4\rangle$.

If $G=\Phi_7(1^5)$, $A=\langle f_0, f_3,f_4\rangle$, $G_0=\langle f_1,f_2\rangle$.
\end{proof}

\begin{defn} \label{d4.7}
The isoclinism family $\Phi_7$ for groups of order $3^5$ consists of
five groups: $G=G(3^5,i)$ where $56\le i\le 60$ and $G(3^5,i)$ is
the GAP code number. These groups $G$ are defined by $G=\langle f_i:
1\le i\le 5\rangle$ with $Z(G)=\langle f_5\rangle$, common relations
\begin{gather*}
[f_2,f_1]=f_4,\ [f_3,f_2]=[f_4,f_1]=f_5,\ [f_3,f_1]=[f_4,f_2]=[f_4,f_3]=1,
\end{gather*}
but with extra relations
\leftmargini=8ex
\begin{enumerate}
\item[(1)] for $G=G(3^5,56): f_i^3=1$ for $1\le i\le 5$;
\item[(2)] for $G=G(3^5,57): f_2^3=f_5$, $f_1^3=f_i^3=1$ for $3\le
i\le 5$; \item[(3)] for $G=G(3^5,58): f_2^3=f_5^2$,
$f_1^3=f_i^3=1$ for $3\le i\le 5$; \item[(4)] for $G=G(3^5,59):
f_1^3=f_2^{-3}=f_5$, $f_i^3=1$ for $3\le i\le 5$; \item[(5)] for
$G=G(3^5,60): f_3^3=f_5$, $f_1^3=f_2^3=f_4^3=f_5^3=1$.
\end{enumerate}

Note that, in the notation of \cite[page~621]{Ja}, the GAP groups
$G(3^5,i)$, $56\le i\le 60$, correspond to $\Phi_7(2111)b_1$,
$\Phi_7(2111)b_\nu$, $\Phi_7(1^5)$, $\Phi_7(2111)a$ and
$\Phi_7(2111)c$ respectively.
\end{defn}

\begin{theorem} \label{t4.8}
If $G$ is a group belonging to the isoclinism family $\Phi_7$ for groups of order $3^5$, then $B_0(G)=0$.
\end{theorem}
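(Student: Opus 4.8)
The plan is to mimic the proof of Theorem~\ref{t4.6}, reducing to the case $p\ge 5$ already handled by writing each group $G=G(3^5,i)$, $56\le i\le 60$, in the form $G=A\rtimes G_0$ with $A$ abelian normal and $G_0$ bicyclic, so that Theorem~\ref{t4.2} applies over an algebraically closed field $k$ with $\fn{char}k\ne 3$ and yields $\fn{Br}_{v,k}(k(G))=0$, whence $B_0(G)=0$ by Theorem~\ref{t1.4}. First I would read off from Definition~\ref{d4.7} the commutator relations $[f_2,f_1]=f_4$, $[f_3,f_2]=[f_4,f_1]=f_5$, and $[f_3,f_1]=[f_4,f_2]=[f_4,f_3]=1$, together with $Z(G)=\langle f_5\rangle$ and the various power relations, and then guess the semidirect-product decompositions by analogy with Theorem~\ref{t4.6}: since $G(3^5,i)$ for $56\le i\le 60$ correspond respectively to $\Phi_7(2111)b_1$, $\Phi_7(2111)b_\nu$, $\Phi_7(1^5)$, $\Phi_7(2111)a$, $\Phi_7(2111)c$, I expect $A=\langle f_2,f_4,f_5\rangle$ (or a suitable rank-two abelian subgroup) and $G_0=\langle f_1,f_3\rangle$ for $G(3^5,57)$ and $G(3^5,58)$; $A=\langle f_1,f_4,f_5\rangle$ or similar with $G_0=\langle f_2,f_3\rangle$ for $G(3^5,59)$ and $G(3^5,60)$; and $A=\langle f_1,f_4,f_5\rangle$ (or a full abelian normal subgroup of order $3^3$) with $G_0=\langle f_2,f_3\rangle$ for $G(3^5,56)$.

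The key verification steps, for each of the five groups, are: (a) check that the chosen $A$ is a normal subgroup of $G$ — this follows since $A\supseteq[G,G]$ in each case, or more directly since $A$ is generated by $f_5\in Z(G)$ together with elements whose commutators with the generators of $G_0$ land back in $A$; (b) check that $A$ is abelian — immediate from the commutator relations, as all the relevant pairwise commutators among the chosen generators are trivial; (c) check that $G_0$ is bicyclic, i.e.\ $G_0=\langle g_1\rangle\times\langle g_2\rangle$ or cyclic — here one must be careful about the power relations (e.g.\ whether $f_1^3=1$, $f_2^3=f_5$, etc.), because a generator of $G_0$ whose cube lands in $A\setminus\{1\}$ would spoil the product $G=A\rtimes G_0$ or the claim that $G_0$ is a genuine subgroup; and (d) check $G=A\cdot G_0$ with $A\cap G_0=\{1\}$, counting orders $|A|\cdot|G_0|=3^3\cdot 3^2=3^5$. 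The main obstacle is step (c) combined with (d): for the groups with extra relations like $f_2^3=f_5$ or $f_1^3=f_2^{-3}=f_5$ or $f_3^3=f_5$, one has to choose $A$ and $G_0$ so that the power relations are entirely absorbed into $A$, leaving $G_0$ genuinely bicyclic of order $3^2$ and intersecting $A$ trivially; this may force slightly different choices of $A$ (e.g.\ including $f_1$ or $f_3$ rather than $f_2$) for the different groups, exactly as happens in the proof of Theorem~\ref{t4.6}.

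Concretely I would organise the write-up as a short case analysis: ``Choose an algebraically closed field $k$ with $\fn{char}k\ne 3$; by Theorems~\ref{t1.4} and \ref{t4.2} it suffices to exhibit, for each $G=G(3^5,i)$ with $56\le i\le 60$, a decomposition $G=A\rtimes G_0$ with $A$ abelian normal and $G_0$ bicyclic,'' followed by a five-line table listing the subgroups $A$ and $G_0$ for each $i$, together with the one-line remark that in each case normality of $A$ and bicyclicity of $G_0$ are read off from the defining relations in Definition~\ref{d4.7}. The routine calculations verifying $A\trianglelefteq G$, $A$ abelian, $G_0$ bicyclic and $A\cap G_0=\{1\}$ I would leave to the reader, exactly as in Theorems~\ref{t4.4} and \ref{t4.6}. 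The only genuine content is the correct bookkeeping of the power relations, so I would double-check the decompositions against the GAP presentations of $G(3^5,i)$ to make sure, for instance, that for $G(3^5,57)$ with $f_2^3=f_5$ the subgroup $A$ is chosen to contain $f_2$ (so its cube stays in $A$) while $G_0=\langle f_1,f_3\rangle$ has $f_1^3=f_3^3=1$, giving $G_0\simeq C_3\times C_3$ and $A\cap G_0=\{1\}$.
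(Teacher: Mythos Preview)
Your approach is correct for four of the five groups $G(3^5,i)$, $i\in\{56,57,58,60\}$, and indeed the paper dispatches these exactly as you describe, by exhibiting $G=A\rtimes G_0$ with $A$ abelian normal and $G_0\simeq C_3\times C_3$ and then invoking Theorem~\ref{t4.2}. However, there is a genuine gap at $G=G(3^5,59)$: no such semidirect decomposition exists for this group, so your programme cannot be completed as stated.

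The obstruction is precisely the power relations you flag as the ``main obstacle'' but do not resolve. In $G(3^5,59)$ one has $f_1^3=f_5$ \emph{and} $f_2^3=f_5^{-1}$, so both $f_1$ and $f_2$ have order $9$. Any abelian normal subgroup $A$ of order $27$ must contain $[G,G]=\langle f_4,f_5\rangle$, and a short computation shows that for every such $A$ and every lift $g=f_1a$ ($a\in A$) of $\bar f_1\in G/A$ one gets $g^3=f_5\ne 1$; hence $g^3\in A\cap G_0$ would be non-trivial, and no complement $G_0$ of order $9$ exists. (Taking $|A|=81$ fails because $G$ has no abelian subgroup of that order; taking $|A|=9$ fails because then $G/A$ is either $C_3^3$ or non-abelian, so a complement cannot be bicyclic.) Thus Theorem~\ref{t4.2} is simply not available for $G(3^5,59)$.

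The paper handles $G(3^5,59)$ by an entirely different argument: it constructs an explicit faithful $9$-dimensional representation of $G$ (induced from a linear character of $\langle f_1,f_3\rangle\simeq C_9\times C_3$), reduces via Theorems~\ref{t3.2} and \ref{t3.1} to a purely monomial action on $k(u_1,\dots,u_8)$, and then observes that this action is \emph{identical} to the one obtained by the same procedure for $H=G(3^5,58)$. Since $B_0(H)=0$ was already established by the semidirect-product method, one concludes $B_0(G)\simeq\fn{Br}_{v,k}(k(u_i)^G)\simeq\fn{Br}_{v,k}(k(v_i)^H)\simeq B_0(H)=0$. You will need to supply this (or an equivalent) argument for $i=59$.
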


\begin{proof}
The proof is the same as that of Theorem \ref{t4.6} except for
$G=G(3^5,59)$. Write $G=A\rtimes G_0$ where $G_0\simeq C_3 \times
C_3$, and \leftmargini=8ex
\begin{enumerate}
\item[(i)]
if $G=G(3^5,56)$, $A=\langle f_2,f_4,f_5\rangle$, $G_0=\langle f_1,f_3\rangle$;
\item[(ii)]
if $G=G(3^5,57)$ or $G(3^5,58)$, $A=\langle f_2,f_4\rangle$, $G_0=\langle f_1,f_3\rangle$;
\item[(iii)]
if $G=G(3^5,60)$, $A=\langle f_1,f_3\rangle$, $G_0=\langle f_2,f_4\rangle$.
\end{enumerate}

It remains to show that $B_0(G)=0$ for $G=G(3^5,59)$.

\bigskip
Step 1. Let $\eta$ be a primitive 9th root of unity and
$\zeta=\eta^3$. We will construct a faithful 9-dimensional
representation of $G$ over $k$, which may be embedded into the
regular representation of $G$. The method is similar to that of
Step 1 in the proof of Theorem \ref{t4.2}.

Let $A=\langle f_1,f_3\rangle=\langle f_1,f_3,f_5\rangle \simeq
C_9\times C_3$ act on the 1-dimensional space $k\cdot X$ by
$f_1\cdot X=\eta X$, $f_3\cdot X=X$. It follows that $f_5\cdot
X=\zeta X$.

The above action defines a faithful linear character $\rho: A\to
k^{\times}$. The induced representation can be written explicitly
as follows.

Define $V=\bigoplus_{1\le i\le 9} k\cdot x_i$ where $x_1=X$, $x_2=f_4\cdot X$, $x_3=f_4^2\cdot X$, $x_4=f_2\cdot X$, $x_5=f_2f_4\cdot X$, $x_6=f_2f_4^2\cdot X$,
$x_7=f_2^2\cdot X$, $x_8=f_2^2 f_4\cdot X$, $x_9=f_2^2f_4^2\cdot X$.
The action of $G$ on $x_i$ is given by
\begin{align*}
f_1:{} & x_1 \mapsto \eta x_1,\ x_2\mapsto \eta^7 x_2,\ x_3\mapsto \eta^4 x_3,\ x_4\mapsto \eta^4 x_6,\ x_5\mapsto \eta x_4,\ x_6\mapsto \eta^7 x_5, \\
& x_7\mapsto \eta^7 x_8,\ x_8\mapsto \eta^4 x_9,\ x_9\mapsto \eta x_7, \\
f_2:{} & x_1\mapsto x_4\mapsto x_7\mapsto \zeta^2x_1,\ x_2\mapsto x_5\mapsto x_8\mapsto \zeta^2 x_2,\ x_3\mapsto x_6\mapsto x_9\mapsto \zeta^2 x_3, \\
f_3:{} & x_1\mapsto x_1,\ x_2\mapsto x_2,\ x_3\mapsto x_3,\ x_4\mapsto \zeta x_4,\ x_5\mapsto \zeta x_5,\ x_6\mapsto \zeta x_6,\\
& x_7\mapsto \zeta^2 x_7,\ x_8\mapsto \zeta^2 x_8,\ x_9\mapsto \zeta^2 x_9, \\
f_4:{} & x_1\mapsto x_2\mapsto x_3\mapsto x_1,\ x_4\mapsto x_5\mapsto x_6\mapsto x_4,\ x_7\mapsto x_8\mapsto x_9\mapsto x_7, \\
f_5:{} & x_i\mapsto \zeta x_i \text{ for }1\le i\le 9.
\end{align*}

By Lemma \ref{l3.9}, it is a faithful representation of $G$.
This representation can be embedded into the regular representation of $G$,
because it is an irreducible representation of $G$.

Apply Theorem \ref{t3.2}.
We find that $k(G)$ is rational over $k(x_i:1\le i\le 9)^G$.

\bigskip
Step 2.
Define $u_1=x_4/x_1$, $u_2=x_7/x_4$, $u_3=x_2/x_1$, $u_4=x_3/x_2$, $u_5=x_5/x_4$, $u_6=x_6/x_5$, $u_7=x_8/x_7$, $u_8=x_9/x_8$.
Apply Theorem \ref{t3.1}.
We find that $k(x_i:1\le i\le 9)^G=k(u_i: 1\le i\le 8)^G(u_0)$ for some element $u_0$ fixed by the action of $G$.

We conclude that $k(G)$ is rational over $k(u_i:1\le i\le 8)^G$.

By Theorem \ref{t3.7} and Theorem \ref{t1.4},
it follows that $B_0(G)\simeq \fn{Br}_{v,k} (k(u_i:1\le i\le 8)^G)$.

\bigskip
Step 3.
Now consider the group $H=G(3^5,58)$.
We will repeat the procedure of Step 1 and Step 2 for $H$.

Namely, define $B=\langle f_1,f_3,f_5\rangle \simeq C_3\times C_3\times C_3$.
Let $B$ act on $k\cdot Y$ by $f_1\cdot Y=f_3\cdot Y=Y$, $f_5\cdot Y=\zeta Y$.

Construct the induced representation $W=\bigoplus_{1\le i\le 9}
k\cdot y_i$ where $y_1=Y$, $y_2=f_4\cdot Y$, $y_3=f_4^2\cdot Y$,
$y_4=f_2\cdot Y$, $y_5=f_2f_4\cdot Y$, $y_6=f_2f_4^2\cdot Y$,
$y_7=f_2^2\cdot Y$, $y_8=f_2^2f_4\cdot Y$, $y_9=f_2^2f_4^2\cdot Y$.
The actions of $f_2$, $f_3$, $f_4$, $f_5$ on $W$ are the same as
those on $V$ (just replace $x_i$'s by $y_i$'s), but
\begin{align*}
f_1:{} & x_1 \mapsto x_1,\ x_2\mapsto \zeta^2x_2,\ x_3\mapsto\zeta x_3,\ x_4\mapsto\zeta x_6,\ x_5\mapsto x_4,\ x_6\mapsto \zeta^2 x_5, \\
& x_7\mapsto \zeta^2 x_8,\ x_8\mapsto \zeta x_9,\ x_9\mapsto x_7.
\end{align*}

The coincidence of the group actions can be explained as follows.
The relations of $G(3^5,59)$ and $G(3^5,58)$ are almost the same except for $f_1^3=f_5$ in $G(3^5,59)$ and $f_1^3=1$ in $G(3^5,58)$.

\bigskip
Step 4. Define $v_1=y_4/y_1$, $v_2=y_7/y_4$, $v_3=y_2/y_1$,
$v_4=y_3/y_2$, $v_5=y_5/y_4$, $v_6=y_6/y_5$, $v_7=y_8/y_7$,
$v_8=y_9/y_8$. Similar to Step 2, we get that $k(H)$ is rational
over $k(v_i:1\le i\le 8)^H$ and $B_0(H)\simeq \fn{Br}_{v,k}
(k(v_i: 1\le i\le 8)^H)$.

Compare the actions of $G$ on $u_1,\ldots, u_8$ with the actions of $H$ on $v_1,\ldots,v_8$.
We find they are the same!

Thus $k(u_i:1\le i\le 8)^G\simeq k(v_i:1\le i\le 8)^G$ over $k$.

Hence $B_0(G)\simeq \fn{Br}_{v,k} (k(u_i:1\le i\le 8)^G)\simeq \fn{Br}_{v,k} (k(v_i:1\le i\le 8)^H) \simeq B_0(H)$.
But $B_0(H)=0$ has been proved at the beginning.
Hence $B_0(G)=0$.
\end{proof}

\section{\boldmath $B_0(G)=0$ for the groups belonging to $\Phi_6$}

Let $p$ be an odd prime number.
Throughout this section $g$ is the smallest positive integer which is a primitive root modulo $p$,
and $\nu$ is the smallest positive integer which is a quadratic non-residue modulo $p$.

\begin{defn} \label{d5.1}
Let $p$ be an odd prime prime. The isoclinism family $\Phi_6$ for
groups of order $p^5$ consists of the groups $G=\Phi_6(221)a$,
$\Phi_6(221)b_r$ (where $1\le r\le (p-1)/2$), $\Phi_6(221)c_r$
(where $r=1$ or $\nu$), $\Phi_6(221)d_0$, $\Phi_6(221)d_r$ (where
$1\le r\le (p-1)/2$), $\Phi_6(2111)a$ (this group exists only for
$p\ge 5$), $\Phi_6(2111)b_r$ (where $r=1$ or $\nu$; these groups
exist only for $p\ge 5$), and $\Phi_6(1^5)$. When $p\ge 5$, there
are $p+7$ such groups; when $p=3$, there are 7 such groups (see
\cite[pages~620--621]{Ja}). These groups $G$ are defined by
$G=\langle f_1,f_2,f_0,h_1,h_2\rangle$ with $Z(G)=\langle
h_1,h_2\rangle$, common relations
\begin{gather*}
[f_1,f_2]=f_0,\ [f_0,f_1]=h_1,\ [f_0,f_2]=h_2,\ f_0^p=h_1^p=h_2^p=1,
\end{gather*}
but with extra relations
\leftmargini=8ex
\begin{enumerate}
\item[(1)]
for $G=\Phi_6(221)a: f_1^p=h_1$, $f_2^p=h_2$;
\item[(2)]
for $G=\Phi_6(221)b_r: f_1^p=h_1^k$, $f_2^p=h_2$ where $k=g^r$;
\item[(3)]
for $G=\Phi_6(221)c_r: f_1^p=h_2^{-r/4}$, $f_2^p=h_1^rh_2^r$;
\item[(4)]
for $G=\Phi_6(221)d_0: f_1^p=h_2$, $f_2^p=h_1^\nu$;
\item[(5)]
for $G=\Phi_6(221)d_r: f_1^p=h_2^k$, $f_2^p=h_1h_2$ where $4k=g^{2r+1}-1$;
\item[(6)]
for $G=\Phi_6(2111)a: f_1^p=h_1$, $f_2^p=1$;
\item[(7)]
for $G=\Phi_6(2111)b_r: f_1^p=1$, $f_2^p=h_1^r$;
\item[(8)]
for $G=\Phi_6(1^5): f_1^p=1$, $f_2^p=1$.
\end{enumerate}
\end{defn}

(Note that whenever the exponent of $h_2$ is fractional, it is
understood that it is taken modulo $p$, which is the order of
$h_2$.)

\bigskip
Before proving $B_0(G)=0$ for the groups $G$ in Definition
\ref{d5.1}, we recall two results in group cohomology.

\begin{theorem}[Dekimpe, Hartl and Wauters \cite{DHW}] \label{t5.4}
Let $G$ be a finite group, $N$ a normal subgroup of $G$. Then the
Hochschild--Serre spectral sequence gives rise to the following
$7$-term exact sequence
\begin{align*}
0 &\to H^1(G/N,\bm{Q}/\bm{Z}) \to H^1(G,\bm{Q}/\bm{Z})\to H^1(N,\bm{Q}/\bm{Z})^G \to H^2(G/N,\bm{Q}/\bm{Z}) \\
&\to H^2(G,\bm{Q}/\bm{Z})_1 \to H^1(G/N,H^1(N,\bm{Q}/\bm{Z}))
\xrightarrow{\lambda} H^3(G/N,\bm{Q}/\bm{Z})
\end{align*}
where $H^2(G,\bm{Q}/\bm{Z})_1=\fn{Ker}\{H^2(G,\bm{Q}/\bm{Z})
\xrightarrow{\fn{res}} H^2(N,\bm{Q}/\bm{Z})\}$ and $\lambda$ is
defined as follows. Choose a section $u\colon G/N\to G$ and define a
$2$-cocycle $\varepsilon\colon G/N\times G/N\to N$ satisfying
$u(\tau)u(\tau')=\varepsilon(\tau,\tau')u(\tau\tau')$ for any
$\tau,\tau'\in G/N$. For each $1$-cocycle $\gamma\colon G/N\to
H^1(N,\bm{Q}/\bm{Z})$, the map $\lambda$ is defined by
\[
\xymatrix@R=4pt{\lambda\colon H^1(G/N, H^1(N,\bm{Q}/\bm{Z})) \ar[r] & H^3(G/N,\bm{Q}/\bm{Z}) \\
\gamma \ar@{|->}[r] & \lambda (\gamma)=c}
\]
where $c: G/N\times G/N \times G/N \to \bm{Q}/\bm{Z}$ is the
$3$-cocycle defined as
$c(\tau_1,\tau_2,\tau_3)=(\side{u(\tau_1\tau_2)}{\gamma(\tau_3)})
(\varepsilon(\tau_1,\tau_2))$ for all $\tau_1,\tau_2,\tau_3\in
G/N$.
\end{theorem}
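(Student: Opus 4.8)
The plan is to obtain the seven-term sequence as a refinement of the classical five-term exact sequence of Hochschild and Serre (already invoked in Lemma~\ref{l2.1}), by extracting a little more from the Lyndon--Hochschild--Serre spectral sequence
\[
E_2^{p,q}=H^p(G/N,H^q(N,\bm{Q}/\bm{Z}))\Longrightarrow H^{p+q}(G,\bm{Q}/\bm{Z})
\]
with trivial coefficients. First I would record the vanishing $E_2^{p,q}=0$ for $p<0$ or $q<0$, so that in total degrees $\le 3$ the only relevant differentials are the $d_2$'s; hence
\[
E_\infty^{0,1}=\fn{Ker}\bigl(d_2\colon E_2^{0,1}\to E_2^{2,0}\bigr),\qquad E_\infty^{1,1}=\fn{Ker}\bigl(d_2\colon E_2^{1,1}\to E_2^{3,0}\bigr),
\]
while $E_\infty^{1,0}=E_2^{1,0}$ and $E_\infty^{2,0}$ is the cokernel of $d_2\colon E_2^{0,1}\to E_2^{2,0}$. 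Next I would identify the relevant maps with the standard ones: $E_2^{2,0}=H^2(G/N,\bm{Q}/\bm{Z})\to H^2(G,\bm{Q}/\bm{Z})$ is the inflation $\fn{inf}$ (so its image lies in $H^2(G,\bm{Q}/\bm{Z})_1$ since $\fn{res}\circ\fn{inf}=0$), the differential $d_2\colon E_2^{0,1}\to E_2^{2,0}$ is the transgression $\fn{tr}$, and the left-edge map $H^n(G,\bm{Q}/\bm{Z})\to E_\infty^{0,n}\hookrightarrow E_2^{0,n}=H^n(N,\bm{Q}/\bm{Z})^G$ is the restriction.

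With these identifications I would unwind the filtration on $H^2$. Because the edge map $H^2(G,\bm{Q}/\bm{Z})\to E_\infty^{0,2}\subseteq H^2(N,\bm{Q}/\bm{Z})^G$ is restriction and the left column carries only outgoing differentials, its kernel $F^1H^2(G,\bm{Q}/\bm{Z})$ is exactly $H^2(G,\bm{Q}/\bm{Z})_1$; and the subquotients of $0\subseteq F^2\subseteq F^1=H^2(G,\bm{Q}/\bm{Z})_1$ are $F^2=E_\infty^{2,0}$ and $F^1/F^2=E_\infty^{1,1}$. This gives the short exact sequence
\[
0\to H^2(G/N,\bm{Q}/\bm{Z})/\fn{Image}(\fn{tr})\to H^2(G,\bm{Q}/\bm{Z})_1\to \fn{Ker}\bigl(\lambda\colon E_2^{1,1}\to E_2^{3,0}\bigr)\to 0,
\]
where I set $\lambda:=d_2^{1,1}$. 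Splicing it with the five-term sequence (which already gives exactness at $H^2(G/N,\bm{Q}/\bm{Z})$, with $\fn{Image}(\fn{tr})=\fn{Ker}(\fn{inf})$ and $\fn{Image}(\fn{inf})=F^2$) and with the tautological left-exact sequence $0\to \fn{Ker}(\lambda)\to E_2^{1,1}\xrightarrow{\lambda}E_2^{3,0}$ produces the asserted seven-term exact sequence; exactness at $H^2(G,\bm{Q}/\bm{Z})_1$ holds because the part killed under the map to $E_2^{1,1}$ is precisely $F^2=\fn{Image}(\fn{inf})$, and the preceding map into $H^2(G,\bm{Q}/\bm{Z})_1$ is $\fn{inf}$ with image $F^2$.

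The one genuinely laborious step — and the main obstacle — is to show that $\lambda=d_2^{1,1}$ is computed by the explicit cocycle formula $c(\tau_1,\tau_2,\tau_3)=\bigl(\side{u(\tau_1\tau_2)}{\gamma(\tau_3)}\bigr)(\varepsilon(\tau_1,\tau_2))$. I would carry this out on cochains, using the standard presentation of the spectral sequence of $1\to N\to G\to G/N\to 1$ via the bar complex together with a set-theoretic section $u\colon G/N\to G$, whose failure to be a homomorphism is recorded by the factor set $\varepsilon$. Concretely, a class in $E_2^{1,1}$ is represented by a $1$-cocycle $\gamma\colon G/N\to H^1(N,\bm{Q}/\bm{Z})$; one picks a lift to an inhomogeneous $2$-cochain on $G$, applies the coboundary, and then tracks the resulting class of filtration degree $3$ through the spectral-sequence bookkeeping down to $E_2^{3,0}=H^3(G/N,\bm{Q}/\bm{Z})$, pulling back along $u$, at which point $\varepsilon$ appears. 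This zig-zag computation is where essentially all the effort lies, the spectral-sequence formalism of the first two steps being routine; alternatively, since the statement is due to Dekimpe, Hartl and Wauters, one may simply cite \cite{DHW} for this last verification.
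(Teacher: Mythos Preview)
Your proposal is correct. The derivation of the seven-term sequence from the low-degree portion of the Lyndon--Hochschild--Serre spectral sequence is standard and your identifications ($F^1H^2=H^2(G,\bm{Q}/\bm{Z})_1$, $\lambda=d_2^{1,1}$, etc.) are accurate; you rightly flag the explicit cocycle formula for $\lambda$ as the real work and defer to \cite{DHW} for it.

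The paper's own ``proof'' is in fact briefer than yours on the spectral-sequence side: it simply cites \cite{DHW} outright for the existence of the sequence and for the general formula for $\lambda$. What the paper adds, and what you do not mention, is an explanation of why the general formula in \cite{DHW} (stated there for an arbitrary $G$-module $M$, and involving an auxiliary $2$-cochain $F'$ built from the map $-\delta^0\colon M\to \fn{Der}(N,M)$) collapses to the clean expression $c(\tau_1,\tau_2,\tau_3)=(\side{u(\tau_1\tau_2)}{\gamma(\tau_3)})(\varepsilon(\tau_1,\tau_2))$ in the case at hand: when $M=\bm{Q}/\bm{Z}$ is a trivial $G$-module, $-\delta^0$ is the zero map, so $F'$ may be taken to vanish. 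Your outline would reach the same formula by carrying out the zig-zag directly for trivial coefficients, so the endpoint is the same; the paper's route is ``quote the general theorem, then specialize,'' while yours is ``sketch the spectral-sequence proof, then either compute or quote the formula.'' Neither approach contains a gap.
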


\begin{proof}
See \cite{DHW} for details.

The formula for $\lambda$ is summarized in \cite[page~21, formula~
(6)]{DHW}. If $\gamma\colon G/N \to H^1(N,M)$ is a 1-cocycle where
$M$ is a $G$-module, $[\gamma]$ denotes its cohomology class in
$H^1(G/N,H^1(N,M))$ in the paper \cite{DHW}. The image
$\lambda([\gamma])\in H^3(G/N,M^N)$ is represented by a 3-cocycle
$c\colon G/N\times G/N \times G/N \to M^N$ which is given on
\cite[page 21]{DHW}. Note that the definition of $-\delta^0\colon
M\to \fn{Der}(N,M)$ can be found on \cite[page~14]{DHW}.

When $M$ is a trivial $G$-module, $-\delta^0$ is a zero map and
therefore the map $F'\colon G/N \times G/N \to M$ on \cite[page~
21]{DHW} can be chosen to be a zero map. Consequently,
$c(q_1,q_2,q_3)=(\side{s_1(q_1q_2)}{s_2 D(q_3)})(F_1(q_1,q_2))$ for
any $q_1,q_2,q_3\in G/N$. This is our formula when
$M=\bm{Q}/\bm{Z}$.
\end{proof}

\begin{theorem} \label{t5.5}
Let $p$ be a prime number, $C_p=\langle \sigma \rangle$ and $M$ be a
$C_p$-module. For any $1$-cocycle $\beta\colon C_p\to M$, the
following map
\[
\xymatrix@R=4pt{\Phi: H^1(C_p,M) \ar[r] & H^3(C_p,M) \\ \beta
\ar@{|->}[r] & \Phi(\beta)=\gamma}
\]
is a group isomorphism where $\gamma: C_p\times C_p\times C_p\to
M$ is a $3$-cocycle defined as
\[
\gamma(\sigma^i,\sigma^j,\sigma^l)=\begin{cases}
0 & \text{if } 0\le i+j\le p-1 \\
\left(\side{\sigma^{i+j}}{\beta}\right) (\sigma^l) & \text{if
}i+j\ge p
\end{cases}
\]
where $0\le i,j,l\le p-1$.
\end{theorem}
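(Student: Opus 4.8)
The plan is to verify directly, via explicit cocycle/coboundary manipulations, that the stated map $\Phi$ is a well-defined isomorphism of cohomology groups. The first step is to recall that for a cyclic group $C_p=\langle\sigma\rangle$ the cohomology of a $C_p$-module $M$ is computed by the standard periodic free resolution, so that $H^1(C_p,M)=\ker N/\,\mathrm{im}(\sigma-1)$ and $H^3(C_p,M)\cong H^1(C_p,M)$ via the cup product with a canonical generator of $H^2(C_p,\mathbf Z)\cong C_p$; here $N=1+\sigma+\cdots+\sigma^{p-1}$ is the norm. The formula given for $\gamma$ in terms of a $1$-cocycle $\beta$ is exactly the description of this periodicity isomorphism in terms of the inhomogeneous bar resolution, so in principle the theorem is ``known'', but the point is to pin down the precise cocycle formula. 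I would therefore organize the proof around three checks: (a) if $\beta$ is a $1$-cocycle then $\gamma$ as defined is a $3$-cocycle; (b) if $\beta$ is a $1$-coboundary then $\gamma$ is a $3$-coboundary, so $\Phi$ descends to cohomology; (c) $\Phi$ is bijective on cohomology.

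For step (a) I would write out the $3$-cocycle condition
\[
\side{\sigma^i}{\gamma(\sigma^j,\sigma^l,\sigma^m)}-\gamma(\sigma^{i+j},\sigma^l,\sigma^m)+\gamma(\sigma^i,\sigma^{j+l},\sigma^m)-\gamma(\sigma^i,\sigma^j,\sigma^{l+m})+\gamma(\sigma^i,\sigma^j,\sigma^l)=0
\]
with all exponents reduced mod $p$ to lie in $\{0,\dots,p-1\}$, and split into cases according to which of the partial sums $i+j$, $j+l$, $l+m$, $i+j+l$ (before reduction) exceed $p-1$. The ``carry'' in $\sigma^i\sigma^j=\sigma^{i+j}$ is what makes $\gamma$ nonzero, and the bookkeeping of carries is exactly a $2$-cocycle $f(\sigma^i,\sigma^j)\in\{0,1\}$ representing the extension $0\to\mathbf Z\to\mathbf Z\to C_p\to 0$; then $\gamma(\sigma^i,\sigma^j,\sigma^l)=\bigl(\side{\sigma^{i+j}}{\beta}\bigr)(\sigma^l)\cdot f(\sigma^i,\sigma^j)$, i.e.\ $\gamma$ is (up to sign conventions) the cup product of the class of $f$ with the class of $\beta$, and the cocycle identity for $\gamma$ follows from those for $f$ and $\beta$ together with the Leibniz rule for cup products. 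The same cup-product description handles step (b): replacing $\beta$ by $\beta'=\beta+(\sigma-1)m$ (a general $1$-coboundary, using that over $C_p$ the $1$-coboundaries are $(\sigma-1)M$ when we use the bar resolution; one must also remember that $H^1(C_p,M)$ is $\ker N$ modulo this, but $\Phi$ is defined on cocycles and we only need that coboundaries go to coboundaries) changes the cup product $f\smile\beta$ by $f\smile\delta(m)=\delta(f\smile m)$ up to sign, hence changes $\gamma$ by a $3$-coboundary.

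For step (c), the cleanest route is to invoke that cup product with a generator of the cyclic group $H^2(C_p,\mathbf Z)\cong C_p$ induces the periodicity isomorphism $H^n(C_p,M)\xrightarrow{\ \sim\ }H^{n+2}(C_p,M)$ for all $n\ge 1$ and all $M$ (Cartan--Eilenberg, or Brown, \emph{Cohomology of Groups}, Ch.~VI); since the class of $f$ is such a generator, $\Phi$ is an isomorphism. Alternatively, one can argue on the level of the $4$-periodic (or $2$-periodic) complex directly: both $H^1$ and $H^3$ are $\ker N/(\sigma-1)M$ under the standard identifications, and tracing the bar-to-periodic comparison map shows $\Phi$ is the identity on these. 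I expect the main obstacle to be purely bookkeeping: getting the case analysis of carries in step (a) exactly right, including the boundary behaviour when one of the exponents is $0$ and checking that the sign conventions in the definition of $\gamma$ (which must be compatible with those used for $\lambda$ in Theorem 5.4) are internally consistent; none of this is conceptually hard, but it is the part where an error is easiest to make, so I would do it by identifying $\gamma$ with an honest cup product and then quoting the standard properties of cup products rather than verifying the $3$-cocycle identity by hand.
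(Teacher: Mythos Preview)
Your proposal is correct and takes essentially the same approach as the paper: the paper's proof also identifies $\gamma$ as the cup product $\alpha\cup\beta$ where $\alpha$ is the carry $2$-cocycle on $C_p$ with values in $\mathbf{Z}$, and then cites Serre's \emph{Local Fields} (p.~149, Theorem~14) for the fact that cup product with this fundamental class of $H^2(C_p,\mathbf{Z})$ gives the periodicity isomorphism $H^1(C_p,M)\xrightarrow{\sim}H^3(C_p,M)$. The paper simply says ``it is easy to check that $\alpha\cup\beta=\gamma$'' and dispenses with your steps (a) and (b), since once one has the cup-product identification the cocycle and coboundary properties are automatic.
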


\begin{proof}
By \cite[page 149, Theorem 14]{Se}, the 2-cocycle $\alpha:
C_p\times C_p\to \bm{Z}$ defined as
\[
\alpha(\sigma^i,\sigma^j)=\begin{cases} 0 & \text{if }0\le i+j\le
p-1 \\ 1 & \text{if } i+j\ge p \end{cases}
\]
represents a ``fundamental" cohomology class in $H^2(C_p,\bm{Z})$
such that, for any $C_p$-module $M$, the map
\[
\xymatrix@R=4pt{\Phi: H^1(C_p,M) \ar[r] & H^3(C_p,M) \\ \beta
\ar@{|->}[r] & \Phi(\beta)=\alpha\cup \beta }
\]
is an isomorphism where $\alpha \cup \beta$ is the cup product. It
is easy to check that $\alpha \cup \beta =\gamma$ where $\gamma$
is defined in the statement of this theorem.
\end{proof}

\begin{theorem} \label{t5.6}
Let $G$ be the group $\Phi_6(221)a$ in Definition \ref{d5.1}. Then
$B_0(G)=0$.
\end{theorem}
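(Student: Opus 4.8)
The plan is to imitate the technique already used for $G(3^5,59)$ in the proof of Theorem \ref{t4.8}, since the group $\Phi_6(221)a$ is not of the form $A \rtimes G_0$ with $G_0$ bicyclic (its abelianization $G/[G,G]$ has order $p^3$ and one checks $G$ has no abelian normal subgroup of index $p$), so Theorem \ref{t4.2} does not apply directly. Instead, I would work over an algebraically closed field $k$ with $\operatorname{char} k \ne p$ (say $k = \bm{C}$) and show $\operatorname{Br}_{v,k}(k(G)) = 0$, which gives $B_0(G) = 0$ by Theorem \ref{t1.4}.

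First I would construct a faithful irreducible $p^2$-dimensional representation of $G$. Take $A = \langle f_1, h_1, h_2 \rangle$, which is an abelian normal subgroup of index $p^2$ (note $[f_1, f_2] = f_0$ shows $f_0 \notin A$, and $[f_0, f_1] = h_1$, $[f_0,f_2]=h_2$; since $Z(G) = \langle h_1, h_2 \rangle$, the subgroup $A$ is abelian and normal). Let $A$ act on a one-dimensional space $k \cdot X$ through a linear character $\rho$ chosen so that $\rho$ is nontrivial on every element of $Z(G) \setminus \{1\}$; inducing $\rho$ up to $G$ gives a representation $V = \bigoplus_{1 \le i \le p^2} k \cdot x_i$ with basis $x_i = f_0^a f_2^b \cdot X$ for $0 \le a, b \le p-1$. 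By Lemma \ref{l3.9} this is faithful (its restriction to $Z(G)$ is faithful by choice of $\rho$), and by irreducibility it embeds in the regular representation, so Theorem \ref{t3.2} gives that $k(G)$ is rational over $k(x_i : 1 \le i \le p^2)^G$. Then, as in Step 2 of the proof of Theorem \ref{t4.8}, I would pass to ratios $u_i = x_{j}/x_{j'}$ of the $x$-variables to eliminate one variable via Theorem \ref{t3.1}, obtaining that $k(G)$ is rational over $k(u_i)^G$ and hence, by Theorem \ref{t3.7} and Theorem \ref{t1.4}, that $B_0(G) \simeq \operatorname{Br}_{v,k}(k(u_i)^G)$.

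The final step is to compare this against the analogous construction applied to a group in the same isoclinism family $\Phi_6$ for which $B_0 = 0$ is already known or can be obtained from Theorem \ref{t4.2}. The natural candidate is $\Phi_6(1^5)$, whose relations differ from those of $\Phi_6(221)a$ only in the power relations $f_1^p = 1$, $f_2^p = 1$ versus $f_1^p = h_1$, $f_2^p = h_2$ — exactly the kind of discrepancy that (as in Steps 3--4 of the proof of Theorem \ref{t4.8}) washes out once one passes to the monomial ratios $u_i$, since the character $\rho$ on $A$ and the resulting induced action on the $x_i$ modulo the one eliminated variable turn out to be identical. (One may need to verify, via Theorem \ref{t5.4} or a direct $A \rtimes G_0$ decomposition, that $B_0(\Phi_6(1^5)) = 0$ serves as the base case; $\Phi_6(1^5)$ has exponent $p$ and may well be handled by Theorem \ref{t4.2} after choosing $A = \langle f_1, f_0, h_1, h_2\rangle$, $G_0 = \langle f_2 \rangle$.)

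The main obstacle I expect is bookkeeping: writing down the $G$-action on all $p^2$ basis vectors $x_i$ explicitly for a general odd prime $p$ (rather than the concrete $p = 3$ case in Theorem \ref{t4.8}), keeping track of the roots of unity $\rho(h_1), \rho(h_2)$ and the twists coming from $[f_0, f_1] = h_1$, $[f_0, f_2] = h_2$, and then verifying that after forming the ratios $u_i$ the actions for $\Phi_6(221)a$ and the base-case group literally coincide. The conceptual content is light, but the comparison must be done carefully to be sure no stray factor of $\rho(h_j)$ survives the passage to ratios.
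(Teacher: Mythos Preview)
Your comparison strategy is sound, and indeed the paper proves in Theorem \ref{t5.3} that all the fields $k(G)$ for $G$ in $\Phi_6$ are mutually isomorphic by exactly the ``pass to ratios'' trick you outline. But the logic runs in the opposite direction: the paper uses $\Phi_6(221)a$ as the base case and deduces the others from it, not the other way round. The reason is that there is no cheap base case in $\Phi_6$.

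Your proposed base case fails concretely: with $A=\langle f_1,f_0,h_1,h_2\rangle$ you have $[f_0,f_1]=h_1\neq 1$, so $A$ is not abelian and Theorem \ref{t4.2} does not apply. More to the point, \emph{no} group in $\Phi_6$ can be written as $A\rtimes G_0$ with $A$ abelian normal and $G_0$ bicyclic. Any index-$p$ normal subgroup contains $[G,G]=\langle f_0,h_1,h_2\rangle$ and hence contains the non-commuting pair $f_0,f_1$ or $f_0,f_2$; and $[G,G]$ itself has no abelian complement, since any two lifts of $\bar f_1,\bar f_2\in G/[G,G]$ have commutator congruent to $f_0$ modulo $\langle h_1,h_2\rangle$, hence nontrivial. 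So Theorems \ref{t3.5} and \ref{t4.2} are both unavailable throughout $\Phi_6$, and your reduction has nowhere to land.

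The paper's proof of Theorem \ref{t5.6} is therefore of a genuinely different nature. It takes $N=\langle f_1,f_0,h_1,h_2\rangle$ (so $G/N\simeq C_p$) and uses the $7$-term exact sequence of Theorem \ref{t5.4}: since $H^2(C_p,\bm{Q}/\bm{Z})=0$, one gets
\[
0\;\longrightarrow\;H^2(G,\bm{Q}/\bm{Z})_1\;\longrightarrow\;H^1(G/N,H^1(N,\bm{Q}/\bm{Z}))\;\xrightarrow{\;\lambda\;}\;H^3(G/N,\bm{Q}/\bm{Z}),
\]
and since $|N|=p^4$ one has $B_0(N)=0$ by Theorem \ref{t1.6}, whence $B_0(G)\subset H^2(G,\bm{Q}/\bm{Z})_1$. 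The real work is then to compute $H^1(G/N,H^1(N,\bm{Q}/\bm{Z}))$ (it is $0$ for $p=3$ and $C_p$ for $p\ge 5$) and to check, via the explicit formula for $\lambda$ in Theorem \ref{t5.4} together with the cup-product isomorphism of Theorem \ref{t5.5}, that $\lambda$ is injective. This cohomological computation is the missing ingredient in your proposal; once it is done for \emph{one} group in $\Phi_6$, your comparison argument (which is the content of Theorem \ref{t5.3}) finishes the rest.
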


\begin{proof}
Step 1. Write $G=\langle f_1,f_2,f_0,h_1,h_2\rangle$. Choose
$N=\langle f_1,f_0,h_1,h_2\rangle$; $N$ is a normal subgroup of
$G$. We will apply Theorem \ref{t5.4} to the group extension $1\to
N\to G\to G/N\to 1$.

Since $G/N=\langle \bar{f}_2 \rangle \simeq C_p$, we find that
$H^2(G/N,\bm{Q}/\bm{Z})=0$ \cite[page~37, Corollary~2.2.12]{Kar}).
By Theorem \ref{t5.4}, we obtain the following exact sequence
\[
0\to H^2(G,\bm{Q}/\bm{Z})_1 \to H^1(G/N,H^1(N,\bm{Q}/\bm{Z}))
\xrightarrow{\lambda} H^3(G/N,\bm{Q}/\bm{Z}).
\]

\bigskip
Step 2. Note that $B_0(G)$ is a subgroup of
$H^2(G,\bm{Q}/\bm{Z})_1$.

For, consider the restriction map $\fn{res}:H^2(G,\bm{Q}/\bm{Z})
\to H^2(N,\bm{Q}/\bm{Z})$. It induces a map $\fn{res}: B_0(G)\to
B_0(N)$ such that the following diagram commutes
\[
\xymatrix{B_0(G) \ar[r] \ar[d] & B_0(N) \ar[d] \\
H^2(G,\bm{Q}/\bm{Z}) \ar[r]_{\fn{res}} & H^2(N,\bm{Q}/\bm{Z}).}
\]

Since $N$ is a $p$-group of order $p^4$, $k(N)$ is $k$-rational
for any algebraically closed field $k$ with $\fn{char}k\ne p$ by
Theorem \ref{t1.6}. It follows that $B_0(N)\simeq \fn{Br}_{v,k}
(k(N))=0$ by Lemma \ref{l1.3} and Theorem \ref{t1.4}. Hence
$B_0(G)$ is contained in the kernel of the map $\fn{res}:
H^2(G,\bm{Q}/\bm{Z})\to H^2(N,\bm{Q}/\bm{Z})$. That is, $B_0(G)$
is a subgroup of $H^2(G,\bm{Q}/\bm{Z})_1$.

If we can show that $H^2(G,\bm{Q}/\bm{Z})_1=0$, then $B_0(G)=0$
and the proof is finished. Note that $H^2(G,\bm{Q}/\bm{Z})_1=0$ if
and only if $\lambda$ is an injective map by the exact sequence in
Step 1.

\bigskip
Step 3. We recall a general fact about $H^1(C_n,M)$.

 Let $G=\langle \sigma \rangle
\simeq C_n$ and $M$ be a $G$-module. Define the map $Norm: M \to
M$ by $Norm \ (x)=x + \sigma \cdot x + \sigma^2 \cdot x + \cdots
+\sigma^{n-1} \cdot x$ for any $x \in M$. It is well-known that
$H^1(G,M) \simeq \fn{Ker}(Norm)/\fn{Image}(\sigma -1)$. We will
give an explicit correspondence between these two groups. If $x
\in M$ satisfies $Norm \ (x)=0$, define a normalized $1$-cocycle
$\beta_x : G \to M$ by $\beta_x(\sigma)=x,\beta_x(\sigma^i)=x+
\sigma \cdot x + \sigma^2 \cdot x + \cdots +\sigma^{i-1} \cdot x$
for $0\le i \le n-1$. It is easy to see that $x \in \fn{Image}
(\sigma -1)$ if and only $\beta_x$ is cohomologously trivial.

\bigskip
Step 4. We will determine $H^1(G/N,H^1(N,\bm{Q}/\bm{Z}))$.

To keep the notations clean and transparent, we adopt the
multiplicative notation for $\bm{Q}/\bm{Z}$, i.e.\ we identify
$\bm{Q}/\bm{Z}$ with all the roots of unity in $\bm{C} \backslash
\{0\}$. Thus a primitive $p$-th root of unity is the element $i/p$
(for some $1\le i\le p-1$) in the additive notation of
$\bm{Q}/\bm{Z}$.

Let $\zeta$ be a primitive $p$-th root of unity. Since
$H^1(N,\bm{Q}/\bm{Z}) \simeq \fn{Hom}(N,\bm{Q}/\bm{Z}) \simeq
\fn{Hom}(N/[N,N],\bm{Q}/\bm{Z})$ and $N/[N,N]=\langle
\bar{f}_1,\bar{f}_0,\bar{h}_2\rangle \simeq C_p\times C_p\times
C_p$, we find that $H^1(N,\bm{Q}/\bm{Z})=\langle
\varphi_1,\varphi_0,\psi\rangle$ where these 1-cocycles
$\varphi_1$, $\varphi_0$, $\psi$ are defined as
\begin{alignat*}{2}
\varphi_1(f_1) &=\zeta, &\quad \varphi_1(f_0) &=\varphi_1(h_1)=\varphi_1(h_2)=1, \\
\varphi_0(f_0) &=\zeta, & \varphi_0(f_1) &=\varphi_0(h_1)=\varphi_0(h_2)=1, \\
\psi(h_2) &=\zeta, & \psi(f_1) &=\psi(f_0)=\psi(h_1)=1.
\end{alignat*}

The group $G$ (resp. $G/N=\langle \bar{f}_2\rangle$) acts on
$H^1(N,\bm{Q}/\bm{Z})=\langle \varphi_1,\varphi_0,\psi\rangle$. It
is easy to verify that
\[
\side{\bar{f}_2}{\varphi_1}=\varphi_1, \quad
\side{\bar{f}_2}{\varphi_0}=\varphi_1\varphi_0,\quad
\side{\bar{f}_2}{\psi}=\varphi_0\psi.
\]

Consider the norm map $Norm\colon H^1(N,\bm{Q}/\bm{Z}) \to
H^1(N,\bm{Q}/\bm{Z})$ defined by the action of $\bar{f}_2$ (see Step
3).

We find that $H^1(G/N,H^1(N,\bm{Q}/\bm{Z}))\simeq
\fn{Ker}(Norm)/\fn{Image}(\bar{f}_2-1) =\langle
\varphi_1,\varphi_0,\psi\rangle/\langle\varphi_1,$
$\varphi_0\rangle$ if $p\ge 5$. But, if $p=3$,
$\fn{Ker}(1+\bar{f}_2+\bar{f}_2^2)=\langle
\varphi_1,\varphi_0\rangle$.

It follows that
\[
H^1(G/N,H^1(N,\bm{Q}/\bm{Z})) =\begin{cases} 0, & \text{if } p=3;
\\ \langle \bar{\psi} \rangle \simeq C_p, & \text{if }p\ge 5.
\end{cases}
\]

When $p=3$, we obtain $H^2(G,\bm{Q}/\bm{Z})_1=0$ from the exact
sequence in Step 1. Hence $B_0(G)=0$.

From now on, we assume that $p\ge 5$. By Step 3, the element
$\bar{\psi} \in \fn{Ker}(Norm)/\fn{Image}$ $(\bar{f}_2-1)$
corresponds to the 1-cocycle $\beta: G/N \to H^1(N,\bm{Q}/\bm{Z})$
defined as
\begin{align*}
& \beta(1)=1, \quad \beta(\bar{f}_2)=\psi, \\
&\beta(\bar{f}_2^i)=\left(\side{\bar{f}_2}{\beta(\bar{f}_2^{i-1})}\right)\bar{f}_2
=\varphi_1^{\binom{i}{3}} \varphi_0^{\binom{i}{2}} \psi^i
\end{align*}
where $1\le i\le p-1$ and $\binom{a}{b}$ is the binomial
coefficient with the convention that $\binom{a}{b}=0$ if $1 \le a
< b$.

\bigskip
Step 5. We will show that $\lambda(\beta)\ne 0$ and finish the
proof of $B_0(G)=0$.

Follow the description of $\lambda$ in Theorem \ref{t5.4}. Choose
a section $u: G/N \to G$ by $u(1)=1$, $u(\bar{f}_2^i)=f_2^i$ for
$1\le i\le p-1$. It is easy to find the 2-cocycle $\varepsilon:
G/N\times G/N \to N$. In fact, if $0\le i,j\le p-1$, then
\[
\varepsilon(\bar{f}_2^i,\bar{f}_2^j)=\begin{cases} 1, & \text{if }
0\le i+j\le p-1; \\ h_2, & \text{if }i+j\ge p; \end{cases}
\]
the second alternative follows from the fact $f_2^p=h_2$.

Now we will evaluate $\lambda(\beta)$ where $\beta$ is the
1-cocycle determined in Step 4. Write $c=\lambda(\beta)$. Then,
for $0\le i,j,l\le p-1$,
\[
c(\bar{f}_2^i,\bar{f}_2^j,\bar{f}_2^l)=\left(\side{u(\bar{f}_2^{i+j})}{\beta(\bar{f}_2^l)}\right)(\varepsilon(\bar{f}_2^i,\bar{f}_2^j))
\]
by Theorem \ref{t5.4}.

In particular, for $0\le i\le p-1$, we have
\begin{align*}
c(\bar{f}_2,\bar{f}_2^{p-1},\bar{f}_2^i) &= \left(\side{u(1)}{\beta(\bar{f}_2^i)}\right) (\varepsilon(\bar{f}_2,\bar{f}_2^{p-1})) =(\beta(\bar{f}_2^i))(h_2) \\
&= \left(\varphi_1^{\binom{i}{3}} \varphi_0^{\binom{i}{2}}
\psi^i\right)(h_2)=(\psi(h_2))^i=\zeta^i.
\end{align*}

On the other hand, apply Theorem \ref{t5.5} for
$\Phi:H^1(G/N,\bm{Q}/\bm{Z})\to H^3(G/N,\bm{Q}/\bm{Z})$. We will
find a 1-cocycle $\tilde{\beta}:G/N\to \bm{Q}/\bm{Z}$ such that
$\Phi(\tilde{\beta})=c\in H^3(G/N,\bm{Q}/\bm{Z})$. In fact, from
Theorem \ref{t5.5},
$c(\bar{f}_2,\bar{f}_2^{p-1},\bar{f}_2^i)=\tilde{\beta}(\bar{f}_2^i)$.
Thus $\tilde{\beta}(\bar{f}_2^i)=\zeta^i$ for all $0\le i\le p-1$.

By Step 3, the 1-cocycle $\tilde{\beta}\in H^1(G/N,\bm{Q}/\bm{Z})$
corresponds to the non-zero element $\bar{\zeta} \in
\fn{Ker}(Norm)/\fn{Image}(\bar{f}_2-1)$, regarding $\zeta$ as an
element in $\fn{Ker}(Norm)$ where $Norm : \bm{Q}/\bm{Z} \to
\bm{Q}/\bm{Z}$ is defined by the action of $\bar{f}_2$ (see Step
3). Hence $\tilde{\beta}\ne 0$ and $\Phi(\tilde{\beta})=c\ne 0$.
Thus $\lambda$ is injective.
\end{proof}

\bigskip
 The proof of the following lemma is routine and is omitted.

\begin{lemma} \label{l5.2}
Let $G$ be a group in Definition \ref{d5.1}. If $0\le i,j\le p-1$,
then $f_0^j f_1^i=f_1^if_0^jh_1^{ij}$,
$f_0^jf_2^i=f_2^if_0^jh_2^{ij}$, and
\[
f_2^if_1^j=f_1^jf_2^if_0^{-ij}h_1^{-i\binom{j}{2}} h_2^{-j\binom{i}{2}}.
\]
\end{lemma}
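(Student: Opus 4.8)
The plan is to deduce everything from three elementary ``straightening'' rules read off from the common relations of Definition \ref{d5.1}, and then to run two short inductions; no reduction modulo $p$ is needed, so the identities in fact hold for all integers $i,j$, the range $0\le i,j\le p-1$ being merely a normalisation (one may reduce exponents afterwards using $f_0^p=h_1^p=h_2^p=1$).

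First I would record the basic rules. With the convention $[g,h]=g^{-1}h^{-1}gh$, the relation $[f_0,f_1]=h_1$ gives $f_0f_1=f_1f_0h_1$, the relation $[f_0,f_2]=h_2$ gives $f_0f_2=f_2f_0h_2$, and $[f_1,f_2]=f_0$ gives $f_2f_1=f_1f_2f_0^{-1}$; throughout, $h_1,h_2\in Z(G)$, so these central elements slide freely past everything (in particular past $f_0$). Since $h_1$ is central, a one-line induction on $i$ and then on $j$ upgrades $f_0f_1=f_1f_0h_1$ to the first asserted identity $f_0^jf_1^i=f_1^if_0^jh_1^{ij}$; the second identity $f_0^jf_2^i=f_2^if_0^jh_2^{ij}$ is obtained identically from $f_0f_2=f_2f_0h_2$. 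These two are then the workhorses for the third.

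Next I would prove the intermediate identity $f_2f_1^j=f_1^jf_2f_0^{-j}h_1^{-\binom{j}{2}}$ by induction on $j$ (the case $j=1$ being exactly $f_2f_1=f_1f_2f_0^{-1}$). In the inductive step one multiplies on the right by $f_1$, slides that $f_1$ leftward past $f_0^{-j}$ using the first identity (producing $h_1^{-j}$) and past $f_2$ using $f_2f_1=f_1f_2f_0^{-1}$, collapses $f_0^{-j}f_0^{-1}=f_0^{-(j+1)}$, and checks that the exponent of $h_1$ becomes $-\binom{j}{2}-j=-\binom{j+1}{2}$. With this in hand, the full identity $f_2^if_1^j=f_1^jf_2^if_0^{-ij}h_1^{-i\binom{j}{2}}h_2^{-j\binom{i}{2}}$ follows by induction on $i$: multiply the case for $i$ on the left by $f_2$, apply the intermediate $j$-identity, push the newly created $f_0^{-j}$ rightward past $f_2^i$ via the second identity (producing $h_2^{-ij}$), and use $\binom{i}{2}+i=\binom{i+1}{2}$ to collect the $h_2$-exponent as $-j\binom{i+1}{2}$.

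The computation is routine and has no genuine obstacle; the only point requiring care is the bookkeeping of the $h_1$- and $h_2$-exponents during collection, which is precisely where the binomial identity $\binom{n}{2}+n=\binom{n+1}{2}$ is used to make each induction close. That is the one step I would bother to write out in detail.
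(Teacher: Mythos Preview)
Your argument is correct: the three straightening rules are read off correctly from the commutator convention $[g,h]=g^{-1}h^{-1}gh$, centrality of $h_1,h_2$ lets them slide freely, and the two inductions close exactly via $\binom{n}{2}+n=\binom{n+1}{2}$. The paper itself omits the proof as routine, and your write-up is precisely the routine computation it has in mind; there is nothing to add or correct.
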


\begin{theorem} \label{t5.3}
Let $p$ be an odd prime number. If $G$ is a group belonging to the
isoclinism family $\Phi_6$ for groups of order $p^5$, then
$B_0(G)=0$.
\end{theorem}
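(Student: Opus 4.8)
The plan is to prove Theorem \ref{t5.3} by going through the list of groups $G$ in the isoclinism family $\Phi_6$ given in Definition \ref{d5.1} and showing $B_0(G)=0$ for each, reusing the machinery already developed. The group $\Phi_6(221)a$ has already been handled in Theorem \ref{t5.6}, so it remains to treat $\Phi_6(221)b_r$, $\Phi_6(221)c_r$, $\Phi_6(221)d_0$, $\Phi_6(221)d_r$, and (for $p\ge 5$) $\Phi_6(2111)a$, $\Phi_6(2111)b_r$, $\Phi_6(1^5)$. The common strategy will be to take $N=\langle f_1,f_0,h_1,h_2\rangle$, which is a normal subgroup of $G$ of order $p^4$ with $G/N=\langle\bar f_2\rangle\simeq C_p$ in every case. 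By Theorem \ref{t1.6}, $B_0(N)=0$ for this order-$p^4$ group, so exactly as in Step 2 of the proof of Theorem \ref{t5.6}, $B_0(G)$ is a subgroup of $H^2(G,\bm Q/\bm Z)_1$. Then the $7$-term exact sequence of Theorem \ref{t5.4}, together with $H^2(G/N,\bm Q/\bm Z)=H^2(C_p,\bm Q/\bm Z)=0$, shows that $H^2(G,\bm Q/\bm Z)_1\hookrightarrow H^1(G/N,H^1(N,\bm Q/\bm Z))$ sits in the kernel of $\lambda\colon H^1(G/N,H^1(N,\bm Q/\bm Z))\to H^3(G/N,\bm Q/\bm Z)$. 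Hence it suffices to prove that $\lambda$ is injective, and then $B_0(G)=0$.

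The next step is to compute $H^1(G/N,H^1(N,\bm Q/\bm Z))$ explicitly. Since $N/[N,N]\simeq C_p\times C_p\times C_p$ is generated (up to which extra relations hold) by $\bar f_1,\bar f_0,\bar h_2$ (with $h_1$ killed because $h_1\in[N,N]$ in these groups — here one must be a little careful, since the $p$-th power relations like $f_1^p=h_1^k$ vary with the group and affect $N/[N,N]$ only through torsion, not through the elementary abelian quotient we need), one gets $H^1(N,\bm Q/\bm Z)=\langle\varphi_1,\varphi_0,\psi\rangle$ with the obvious dual basis. The conjugation action of $\bar f_2$ on these characters is dictated by Lemma \ref{l5.2}: from $f_2^{-1}f_1f_2=f_1 f_0\cdot(\text{central})$ and $f_2^{-1}f_0 f_2=f_0 h_2$ one reads off $\side{\bar f_2}{\varphi_1}=\varphi_1$, $\side{\bar f_2}{\varphi_0}=\varphi_1\varphi_0$, $\side{\bar f_2}{\psi}=\varphi_0\psi$ — the same action as in Theorem \ref{t5.6}, because all groups in $\Phi_6$ share the commutator relations $[f_1,f_2]=f_0$, $[f_0,f_1]=h_1$, $[f_0,f_2]=h_2$. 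Therefore, just as in Step 4 of Theorem \ref{t5.6}, $H^1(G/N,H^1(N,\bm Q/\bm Z))=0$ when $p=3$ (so $B_0(G)=0$ immediately) and equals $\langle\bar\psi\rangle\simeq C_p$ when $p\ge 5$, represented by the same explicit $1$-cocycle $\beta$ with $\beta(\bar f_2^i)=\varphi_1^{\binom i3}\varphi_0^{\binom i2}\psi^i$.

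The final and genuinely group-dependent step is to evaluate $\lambda(\beta)\in H^3(C_p,\bm Q/\bm Z)$ and show it is nonzero; this is where the different $p$-th power relations enter. Choosing the section $u(\bar f_2^i)=f_2^i$, the $2$-cocycle $\varepsilon\colon G/N\times G/N\to N$ satisfies $\varepsilon(\bar f_2^i,\bar f_2^j)=f_2^p=(\text{the value of }f_2^p\text{ in }N)$ when $i+j\ge p$ and is trivial otherwise. By the formula of Theorem \ref{t5.4}, $c(\bar f_2,\bar f_2^{p-1},\bar f_2^i)=(\side{u(1)}{\beta(\bar f_2^i)})(\varepsilon(\bar f_2,\bar f_2^{p-1}))=(\beta(\bar f_2^i))(f_2^p)$. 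Since $\varphi_1,\varphi_0$ annihilate everything of the form $h_1^*h_2^*$, this reduces to $\psi(f_2^p)^i$. For the groups $\Phi_6(221)a$, $\Phi_6(221)b_r$, $\Phi_6(221)c_r$, $\Phi_6(221)d_r$ one has $f_2^p$ involving $h_2$ to a nonzero power modulo $p$ (namely $f_2^p=h_2$, $h_2$, $h_1^rh_2^r$, $h_1h_2$ respectively), so $\psi(f_2^p)=\zeta^{(\text{unit})}\ne 1$, and then exactly as in Step 5 of Theorem \ref{t5.6}, via Theorem \ref{t5.5}, $\lambda(\beta)$ corresponds to a nonzero element of $H^1(C_p,\bm Q/\bm Z)$, hence $\lambda$ is injective. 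I expect the main obstacle to be the groups where $\psi(f_2^p)=1$: these are $\Phi_6(221)d_0$ (where $f_2^p=h_1^\nu$, no $h_2$), $\Phi_6(2111)a$ ($f_2^p=1$), $\Phi_6(2111)b_r$ ($f_2^p=h_1^r$), and $\Phi_6(1^5)$ ($f_2^p=1$). For these the naive computation of $c(\bar f_2,\bar f_2^{p-1},\bar f_2^i)$ vanishes, so one must either choose a different normal subgroup $N$ (e.g. $\langle f_2,f_0,h_1,h_2\rangle$, swapping the roles of $f_1$ and $f_2$, which relabels the $p$-th power data) to recover a nonzero evaluation, or argue that $H^1(G/N,H^1(N,\bm Q/\bm Z))$ is already $0$ because the relevant extra relation alters $N/[N,N]$, or evaluate $\lambda(\beta)$ on a different triple of arguments where the $h_1$-component of $f_1^p$ or $f_2^p$ shows up. Resolving these residual cases — choosing the right $N$ and the right test triple for each — will be the crux of the argument; once that is done, $B_0(G)=0$ follows uniformly for every group in $\Phi_6$.
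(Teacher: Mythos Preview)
Your plan diverges from the paper's proof, and the ``residual cases'' you flag are a genuine obstruction, not just a bookkeeping nuisance. The paper does \emph{not} rerun the seven-term computation for each group in $\Phi_6$. Instead it constructs, for every such $G$, an explicit faithful representation on $(\bigoplus k\cdot x_i)\oplus(\bigoplus k\cdot y_i)$ (induced from linear characters of the two obvious index-$p$ subgroups $H_1,H_2$), applies Theorems~\ref{t3.1} and~\ref{t3.2} to reduce $k(G)$ to a fixed field $k(u_i,U_i:1\le i\le p-1)^{\langle f_1,f_2,f_0\rangle}$, and then observes that after the substitution $u'_i=u_i/\eta^{e_1}$, $U'_i=U_i/\eta^{e_2}$ (with $\eta$ a primitive $p^2$-th root of unity) the parameters encoding $f_1^p,f_2^p$ disappear entirely. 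Hence all the fields $k(G)$ for $G\in\Phi_6$ are rational over one and the same field, so they share the same $\fn{Br}_{v,k}$, and the single computation of Theorem~\ref{t5.6} for $\Phi_6(221)a$ finishes the proof. This sidesteps exactly the case analysis you are attempting.

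Your proposed workarounds do not close the gap for $\Phi_6(1^5)$, $\Phi_6(2111)b_r$, and $\Phi_6(221)d_0$. Take $\Phi_6(1^5)$: here $f_1^p=f_2^p=1$, so with \emph{either} choice $N=\langle f_1,f_0,h_1,h_2\rangle$ or $N=\langle f_2,f_0,h_1,h_2\rangle$ the extension $1\to N\to G\to C_p\to 1$ is split, the section has $\varepsilon\equiv 1$, and the three-cocycle $c=\lambda(\beta)$ is identically zero on \emph{every} triple, not just the one you tested. Thus $\lambda=0$ and the exact sequence gives $H^2(G,\bm{Q}/\bm{Z})_1\simeq C_p\ne 0$ for $p\ge 5$; the inclusion $B_0(G)\subset H^2(G,\bm{Q}/\bm{Z})_1$ is then inconclusive. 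The same happens for $\Phi_6(2111)b_r$ (swapping $N$ makes the extension split since $f_1^p=1$; keeping the original $N$, one has $f_2^p=h_1^r$ with $h_1\in[N,N]$, so every character of $N$ kills $\varepsilon$ and again $c\equiv 0$), and symmetrically for $\Phi_6(221)d_0$ (where $f_1^p=h_2$, $f_2^p=h_1^\nu$, and in each choice of $N$ the value of the relevant $p$-th power lies in $[N,N]$). To salvage your route for these groups you would need an additional argument---for instance, explicitly producing a bicyclic subgroup on which the nonzero class of $H^2(G,\bm{Q}/\bm{Z})_1$ restricts nontrivially---which is essentially a new computation, not a variant of Theorem~\ref{t5.6}. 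A minor side remark: your parenthetical that the $p$-th power relations ``affect $N/[N,N]$ only through torsion'' is not right either; for $\Phi_6(221)d_0$ with $N=\langle f_1,f_0,h_1,h_2\rangle$ one has $f_1^p=h_2\notin[N,N]$, so $N/[N,N]\simeq C_{p^2}\times C_p$, not $C_p^3$.
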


\begin{proof}
Let $k$ be an algebraically closed field with char$k \neq p$ (in
particular, we may choose $k=\bm{C}$). Let $\eta \in k$ be a
primitive $p^2$-th root of unity and $\zeta=\eta^p$. In the
following we adopt the notation in Definition \ref{d5.1}. We will
show that the fields $k(G)$ are isomorphic to one another over $k$
for all groups $G$ in the isoclinism family $\Phi_6$. Thus they have
isomorphic $Br_{v,k}(k(G)) \simeq B_0(G)$ by Theorem \ref{t1.4}.
Since $B_0(G)=0$ if $G=\Phi_6(221)a$ by Theorem \ref{t5.6}, it
follows that $B_0(G)=0$ for all other groups $G$.

\begin{Case}{1} $G=\Phi_6(221)a$, $\Phi_6(221)b_r$ (where $1\le r\le (p-1)/2$), $\Phi_6(2111)a$, $\Phi_6(1^5)$. \end{Case}

Step 1. For these groups $G$, we have
\[
f_1^p=h_1^{e_1}, \quad f_2^p=h_2^{e_2}
\]
where $0\le e_1,e_2\le p-1$.

We will employ the same method as in Step 1 of the proof in
Theorem \ref{t4.8}.

Consider the subgroups $H_1=\langle f_1,f_0,h_1,h_2\rangle$ and $H_2=\langle f_2,f_0,h_1,h_2\rangle$ of $G$.
Note that $H_2=\langle f_2,f_0,h_2\rangle \times \langle h_1\rangle \simeq \langle f_2,f_0,h_2\rangle \times C_p$.
Hence we get a linear character of $H_2$ so that $\langle f_2,f_0,h_2\rangle$ is the kernel.
Explicitly, we may define an action of $H_2$ on $k\cdot X$ defined by
\[
h_1\cdot X=\zeta X,\quad f_2\cdot X=f_0\cdot X=h_2\cdot X=X.
\]

Similarly, define an action of $H_1$ on $k\cdot Y$ by
\[
h_2\cdot Y=\zeta Y,\quad f_1\cdot Y=f_0\cdot Y=h_1\cdot Y=Y.
\]

Construct the induced representations of these linear characters by
defining $x_i=f_1^i\cdot X$, $y_i=f_2^i\cdot Y$ for $0\le i\le p-1$.
Thus we get an action of $G$ on $(\bigoplus_{0\le i\le p-1} k\cdot
x_i) \oplus (\bigoplus_{0\le i\le p-1} k\cdot y_i)$. With the aid of
Lemma \ref{l5.2}, the action of $G$ is given as follows.
\begin{align*}
f_1 &: x_0\mapsto x_1\mapsto \cdots \mapsto x_{p-1}\mapsto \zeta^{e_1}x_0,\ y_i\mapsto \zeta^{\binom{i}{2}} y_i, \\
f_2 &: x_i\mapsto \zeta^{-\binom{i}{2}}x_i,\ y_0\mapsto y_1\mapsto\cdots \mapsto y_{p-1}\mapsto \zeta^{e_2}y_0, \\
f_0 &: x_i\mapsto \zeta^i x_i,\ y_i\mapsto \zeta^i y_i, \\
h_1 &: x_i\mapsto \zeta x_i,\ y_i\mapsto y_i, \\
h_2 &: x_i\mapsto x_i,\ y_i\mapsto \zeta y_i.
\end{align*}

By Lemma \ref{l3.9}, $G$ acts faithfully on $(\bigoplus_{0\le i\le
p-1} k\cdot x_i)\oplus (\bigoplus_{0\le i\le p-1} k\cdot y_i)$.
Moreover, this representation may be embedded into the regular
representation of $G$. By Theorem \ref{t3.2}, we find that $k(G)$
is rational over $k(x_i,y_i:0\le i\le p-1)^G$.

\bigskip
Step 2. We will apply Theorem \ref{t3.1} to $k(x_i,y_i:0\le i\le
p-1)^G$. Define $u_i=x_i/x_{i-1}$, $U_i=y_i/y_{i-1}$ for $1\le i\le
p-1$. By applying Theorem \ref{t3.1} twice, we get $k(x_i,y_i:0\le
i\le p-1)^G=k(u_i,U_i:1\le i\le p-1)^G(u_0,U_0)$ where $u_0$, $U_0$
are fixed by the action of $G$. The action of $G$ on $u_i$, $U_i$ is
given by
\begin{align*}
f_1 &: u_1\mapsto u_2\mapsto \cdots \mapsto u_{p-1} \mapsto \zeta^{e_1}/(u_1u_2\cdots u_{p-1}),\ U_i\mapsto \zeta^{i-1}U_i, \\
f_2 &: u_i\mapsto \zeta^{-(i-1)}u_i,\ U_1\mapsto U_2\mapsto \cdots\mapsto U_{p-1} \mapsto \zeta^{e_2}/(U_1U_2\cdots U_{p-1}), \\
f_0 &: u_i\mapsto \zeta u_i,\ U_i\mapsto \zeta U_i.
\end{align*}

Note that $h_1(u_i)=h_2(u_i)=u_i$, $h_1(U_i)=h_2(U_i)=U_i$ for $1\le i\le p-1$.  Thus
\begin{align*}
k(u_i,U_i:1\le i\le p-1)^G &= k(u_i,U_i: 1\le i\le p-1)^{G/\langle h_1,h_2\rangle} \\
&=k(u_i,U_i:1\le i\le p-1)^{\langle f_0,f_1,f_2\rangle}.
\end{align*}

\bigskip
Step 3.
Define $u'_i=u_i/\eta^{e_1}$, $U'_i=U_i/\eta^{e_2}$ for $1\le i\le p-1$.

It follows that $k(u_i,U_i:1\le i\le p-1)=k(u'_i,U'_i:1\le i\le p-1)$ and
\begin{align*}
f_1 &: u'_1\mapsto u'_2\mapsto \cdots \mapsto u'_{p-1}\mapsto 1/(u'_1u'_2\cdots u'_{p-1}),\ U'_i\mapsto \zeta^{i-1}U'_i, \\
f_2 &: u'_i\mapsto \zeta^{-(i-1)}u'_i,\ U'_1\mapsto U'_2\mapsto\cdots \mapsto U'_{p-1}\mapsto 1/(U'_1U'_2\cdots U'_{p-1}), \\
f_0 &: u'_i\mapsto \zeta u'_i,\ U'_i\mapsto \zeta U'_i.
\end{align*}

Note that the parameters $e_1$, $e_2$ of these groups $G$ disappear
in the above action. In conclusion, for any group $G$ in this case,
$k(G)$ is rational over $k(u'_i,U'_i: 1 \le i \le p-1)^{\langle
f_1,f_2,f_0 \rangle}$. Thus all these fields $k(G)$ are isomorphic.

\bigskip
\begin{Case}{2} $G=\Phi_6(221)c_r$ (where $r=1$ or $\nu$), $\Phi_6(221)d_r$ (where $1\le r\le (p-1)/2$). \end{Case}

For these groups $G$, we have
\[
f_1^p=h_2^{e_1},\quad f_2^p=h_1^{e_2}h_2^{e_2}
\]
where $1\le e_1,e_2\le p-1$.
The proof is similar to Step 1 and Step 2 of Case 1.

Find integers $e'_1$, $e'_2$ such that $1\le e'_1,e'_2\le p-1$ and $e_1e'_1\equiv e_2e'_2\equiv 1$ (mod $p$).

Consider the subgroups $H_1=\langle f_1,f_0,h_1,h_2\rangle$,
$H_2=\langle f_2,f_0,h_1,h_2\rangle$ of $G$. Since $H_2/\langle
h_2\rangle=\langle \bar{f}_2,\bar{f}_0\rangle \simeq C_{p^2}\times
C_p$, we get a linear character of $H_2$. Similarly for $H_1$.
More precisely, we have actions of $H_2$ on $k\cdot X$, and $H_1$
on $k\cdot Y$ defined by
\begin{alignat*}{3}
f_2\cdot X &= \eta^{e'_2} X, &\quad h_1\cdot X &=\zeta X, &\quad f_0\cdot X &= h_2\cdot X=X, \\
f_1\cdot Y &= \eta^{e'_1}Y, & h_2\cdot Y &= \zeta Y, & f_0\cdot Y &= h_1\cdot Y=Y.
\end{alignat*}

Find the induced representations of $G$ from these two linear characters.
Define $x_i=f_1^i\cdot X$, $y_i=f_2^i\cdot Y$ where $0\le i\le p-1$.
Then $G$ acts faithfully on $(\bigoplus_{0\le i\le p-1} k\cdot x_i)\oplus (\bigoplus_{0\le i\le p-1} k\cdot y_i)$.
Thus $k(G)$ is rational over $k(x_i,y_i:1\le i\le p-1)^G$.

The action of $G$ is given by
\begin{align*}
f_1 &: x_0\mapsto x_1\mapsto \cdots \mapsto x_{p-1}\mapsto x_0,\ y_i\mapsto \eta^{e'_1+p\binom{i}{2}} y_i, \\
f_2 &: x_i\mapsto \eta^{e'_2-p\binom{i}{2}}x_i,\ y_0\mapsto y_1\mapsto \cdots \mapsto y_{p-1} \mapsto \zeta^{e_2}y_0, \\
f_0 &: x_i\mapsto \zeta^ix_i,\ y_i\mapsto \zeta^i y_i, \\
h_1 &: x_i\mapsto \zeta x_i,\ y_i\mapsto y_i, \\
h_2 &: x_i\mapsto x_i,\ y_i\mapsto \zeta y_i.
\end{align*}

Define $u_i=x_i/x_{i-1}$, $U_i=y_i/y_{i-1}$ for $1\le i\le p-1$.
We get $k(x_i,y_i:1\le i\le p-1)^G=k(u_i,U_i:1\le i\le
p-1)^G(u_0,U_0)$ where $u_0$, $U_0$ are fixed by $G$ by applying
Theorem \ref{t3.1} twice. The action of $G$ is given by
\begin{align*}
f_1 &: u_1\mapsto u_2\mapsto \cdots\mapsto u_{p-1}\mapsto 1/(u_1u_2\cdots u_{p-1}),\ U_i\mapsto \zeta^{i-1}U_i, \\
f_2 &: u_i\mapsto \zeta^{-(i-1)}u_i,\ U_1\mapsto U_2\mapsto \cdots\mapsto U_{p-1}\mapsto \zeta^{e_2}/(U_1U_2\cdots U_{p-1}), \\
f_0 &: u_i\mapsto \zeta u_i,\ U_i\mapsto \zeta U_i.
\end{align*}

But the above action is just a special case of the action in Step
2 of Case 1. Hence the result.

\bigskip
\begin{Case}{3} $G=\Phi_6(221)d_0$. \end{Case}

This group satisfies
\[
f_1^p=h_2, \quad f_2^p=h_1^e
\]
where $1\le e\le p-1$. In fact, $e=\nu$.

The proof is the same as for Case 2. Choose an integer $e'$ such
that $1\le e'\le p-1$ and $ee'\equiv 1$ (mod $p$).

\medskip
Consider the subgroups $H_1=\langle f_1,f_0,h_1,h_2\rangle$, $H_2=\langle f_2,f_0,h_1,h_2\rangle$.
Note that $H_1/\langle h_1\rangle \simeq C_{p^2} \times C_p\simeq H_2/\langle h_2\rangle$.
Hence we get vectors $X$ and $Y$ such that
\begin{alignat*}{3}
f_2\cdot X &= \eta^{e'}X, &\quad h_1\cdot X &= X, &\quad f_0\cdot X &= h_2\cdot X=X, \\
f_1\cdot Y &= \eta Y, & h_2\cdot Y&=\zeta Y, & f_0\cdot Y &=h_1\cdot Y=Y.
\end{alignat*}

Construct the induced representation of $G$ on $(\bigoplus_{0\le i\le p-1}k\cdot x_i)\oplus (\bigoplus_{0\le i\le p-1} k\cdot y_i)$ where
$x_i=f_1^i\cdot X$, $y_i=f_2^i\cdot Y$ with $0\le i\le p-1$.
It follows that
\begin{align*}
f_1 &: x_0\mapsto x_1\mapsto \cdots \mapsto x_{p-1} \mapsto x_0,\ y_i\mapsto \eta^{1+p\binom{i}{2}}y_i, \\
f_2 &: x_i\mapsto \eta^{e'-p\binom{i}{2}}x_i,\ y_0\mapsto y_1\mapsto \cdots \mapsto y_{p-1}\mapsto y_0, \\
f_0 &: x_i\mapsto \zeta^i x_i,\ y_i\mapsto \zeta^i y_i, \\
h_1 &: x_i\mapsto \zeta x_i,\ y_i\mapsto y_i, \\
h_2 &: x_i\mapsto x_i,\ y_i\mapsto \zeta y_i.
\end{align*}

By the same arguments as in Case 2, we solve this case.

\bigskip
\begin{Case}{4} $G=\Phi_6(2111)b_r$ (where $r=1$ or $\nu$). \end{Case}

These two groups $G$ satisfy
\[
f_1^p=1 \quad \text{and}\quad f_2^p=h_1^e
\]
where $1\le e\le p-1$.

Choose an integer $e'$ such that $1\le e' \le p-1$ and $ee'\equiv 1$ (mod $p$).

The proof is almost the same as for Case 2.

\medskip
Consider $H_1=\langle f_1,f_0,h_1,h_2\rangle$ and $H_2=\langle f_2,f_0,h_1,h_2\rangle$.
Note that $H_1/\langle h_1\rangle \simeq C_p\times C_p \times C_p$ and $H_2/\langle h_2\rangle \simeq C_{p^2}\times C_p$.
Thus we get vectors $X$ and $Y$ such that
\begin{gather*}
f_2\cdot X=\eta^{e'}X, \quad h_1\cdot X=\zeta X, \quad f_0\cdot X=h_2\cdot X=X, \\
h_2\cdot Y=\zeta Y,\quad f_1\cdot Y=f_0\cdot Y=h_1\cdot Y=Y.
\end{gather*}

Define $x_i=f_1^i\cdot X$, $y_i=f_2^i\cdot Y$ for $0\le i\le p-1$.
The action of $G$ on $k(x_i,y_i:0\le i\le p-1)$ is given by
\begin{align*}
f_1 &: x_0\mapsto x_1\mapsto \cdots \mapsto x_{p-1}\mapsto x_0,\ y_i\mapsto \zeta^{\binom{i}{2}} y_i, \\
f_2 &: x_i\mapsto \eta^{e'-p\binom{i}{2}} x_i,\ y_0\mapsto y_1\mapsto \cdots \mapsto y_{p-1} \mapsto y_0, \\
f_0 &: x_i\mapsto \zeta^i x_i,\ y_i\mapsto \zeta^i y_i, \\
h_1 &: x_i\mapsto \zeta x_i,\ y_i\mapsto y_i, \\
h_2 &: x_i\mapsto x_i,\ y_i\mapsto \zeta y_i.
\end{align*}

The remaining part is the same as in Case 2. Hence the result.
\end{proof}

\bigskip
\begin{proof}[Proof of Theorem \ref{t1.11}]
Combine Theorems \ref{t2.3}, \ref{t4.1}, \ref{t4.4}, \ref{t4.6},
\ref{t4.8} and \ref{t5.3}.
\end{proof}

\begin{theorem} \label{t5.7}
Let $p$ be an odd prime number and $k$ be an algebraically closed
field with char $k \neq p$. If $G$ is a group belonging to the
isoclinism family $\Phi_{10}$ for groups of order $p^5$, then
there is a linear representation $G \to GL(V)$ over $k$ satisfying
{\rm (i)} $dim_k V = p^2$, and {\rm (ii)} $k(V)^G$ is not
$k$-rational. In particular, the quotient variety $\bm{P}(V)/G$ is
not $k$-rational where $\bm{P}(V)$ is the projective space
associated to $V$ and the action of $G$ on $\bm{P}(V)$ by
projective linear automorphisms is induced from the linear
representation $G \to GL(V)$.

On the other hand, if $k$ is an algebraically closed field with
char$k \neq 2$ and $G$ is a group belonging to the $16$th
isoclinism family for groups of order $64$, then there is a linear
representation $G \to GL(V)$ over $k$ satisfying {\rm (i)} $dim_k
V = 8$, and {\rm (ii)} $k(V)^G$ is not $k$-rational. In
particular, the quotient variety $\bm{P}(V)/G$ is not
$k$-rational.
\end{theorem}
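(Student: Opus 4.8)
The plan is to reduce both assertions to Theorem~\ref{t1.11} and Theorem~\ref{t1.13} by means of two general facts: the unramified Brauer group $\fn{Br}_{v,k}(k(V)^G)$ does not depend on the choice of a faithful representation $V$, and $k(V)^G$ is always rational of transcendence degree one over $k(\bm{P}(V))^G$. Concretely, let $V$ be \emph{any} faithful $k$-representation of $G$ and let $V_{\mathrm{reg}}$ be the regular representation, so that $k(G)=k(V_{\mathrm{reg}})^G$ by definition. Applying the no-name lemma to the faithful representations $V$ and $V_{\mathrm{reg}}$ shows that $k(V\oplus V_{\mathrm{reg}})^G$ is purely transcendental over each of $k(V)^G$ and $k(V_{\mathrm{reg}})^G=k(G)$; hence $k(V)^G$ and $k(G)$ are stably isomorphic over $k$. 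Since adjoining one variable does not change the unramified Brauer group by Theorem~\ref{t3.7}(2), we get $\fn{Br}_{v,k}(k(V)^G)\simeq\fn{Br}_{v,k}(k(G))\simeq B_0(G)$, the last isomorphism by Theorem~\ref{t1.4}. For $G$ in the family $\Phi_{10}$ one has $B_0(G)\ne 0$ by Theorem~\ref{t2.3} (for $G$ in the $16$th isoclinism family of order $64$, by Theorem~\ref{t1.13}), so by Lemma~\ref{l1.3} the field $k(V)^G$ is not retract $k$-rational, in particular not $k$-rational. This yields part~(ii) the moment a faithful $V$ of the stated dimension is produced.

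Next I would exhibit such a $V$. For $G\in\Phi_{10}$, put $N=[G,G]$; by the commutator relations of Lemma~\ref{l2.2}, $N=\langle f_3,f_4,f_5\rangle$ is an abelian normal subgroup of order $p^3$ containing $Z(G)=\langle f_5\rangle$, so $[G:N]=p^2$. Choose a linear character $\psi\colon N\to k^{\times}$ with $\psi(f_5)$ a primitive $p$-th root of unity (possible because $k$ is algebraically closed) and set $V=\fn{Ind}_N^G\psi$, a representation of dimension $[G:N]=p^2$. Since $Z(G)\subseteq N$ is central, the restriction of $V$ to $Z(G)$ is $p^2$ copies of $\psi|_{Z(G)}$, which is non-trivial; hence no non-trivial element of $Z(G)$ acts trivially on $V$, and $V$ is faithful by Lemma~\ref{l3.9}. (If desired, $\psi$ may be chosen with trivial stabilizer in $G/N\simeq C_p\times C_p$, so that $V$ is irreducible, but this is not needed.) The group of order $64$ in the $16$th isoclinism family is handled in exactly the same way: one induces a suitable linear character from an abelian subgroup of index $2^3$ to obtain a faithful $8$-dimensional representation, faithfulness again being checked via Lemma~\ref{l3.9} from the explicit presentations in \cite{CHKK}.

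Finally, for the projective statement I would prove the auxiliary lemma that $k(V)^G=k(\bm{P}(V))^G(h)$ with $h$ transcendental over $k(\bm{P}(V))^G$. The scalar action of $\bm{G}_m$ on $V$ commutes with $G$, hence acts on $k(V)^G$ with fixed field $(k(V)^G)^{\bm{G}_m}=(k(V)^{\bm{G}_m})^G=k(\bm{P}(V))^G$. Let $e$ be the greatest common divisor of the degrees of the nonzero homogeneous components of the graded ring $k[V]^G$ (a nonzero numerical semigroup, since $k[V]^G$ contains constants and, e.g., $\prod_{g\in G}(g\cdot x_1)$); by B\'ezout there are homogeneous $a,b\in k[V]^G$ with $\deg a-\deg b=e$. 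Localizing $k[V]^G$ at $ab$ produces a graded ring all of whose degrees lie in $e\bm{Z}$, which equals its degree-zero subring with $h:=a/b$ and $h^{-1}$ adjoined; passing to fraction fields gives $k(V)^G=k(\bm{P}(V))^G(h)$, and $h$ is transcendental over $k(\bm{P}(V))^G$ by a transcendence-degree count. Combining this with Theorem~\ref{t3.7}(2) yields $\fn{Br}_{v,k}(k(\bm{P}(V))^G)\simeq\fn{Br}_{v,k}(k(V)^G)\simeq B_0(G)\ne 0$, so $k(\bm{P}(V))^G$ is not retract $k$-rational by Lemma~\ref{l1.3}, i.e.\ the quotient variety $\bm{P}(V)/G$ (whose function field is $k(\bm{P}(V))^G$) is not $k$-rational. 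The only genuinely delicate point in this program is the explicit construction of a faithful representation of precisely the prescribed dimension---in particular, ensuring faithfulness in the order-$64$ case if the center there fails to be cyclic---while the rest is a routine assembly of results already at hand.
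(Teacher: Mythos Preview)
Your argument for the odd-prime case is essentially the paper's: the paper also takes $H=\langle f_3,f_4,f_5\rangle$, induces a linear character that is nontrivial on $f_5$, invokes Lemma~\ref{l3.9} for faithfulness, and transfers $B_0(G)\ne 0$ to $k(V)^G$ via stable equivalence with $k(G)$. The only cosmetic difference is that where you invoke the no-name lemma and a graded-ring/B\'ezout argument, the paper simply cites the already-packaged Theorems~\ref{t3.2} and~\ref{t3.1}; in particular, writing $k(V)=k(x_i/x_1:2\le i\le p^2)(x_1)$ with $\sigma(x_1)\in k(x_i/x_1)\cdot x_1$ puts you squarely in the hypotheses of Theorem~\ref{t3.1}, which gives $k(V)^G=k(\bm{P}(V))^G(f)$ in one line.

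There is, however, a real gap in the order-$64$ case, and it is precisely the one you flag but do not resolve. For the groups in the $16$th isoclinism family the center is \emph{not} cyclic: for instance $Z(G(2^6,149))=\langle f_2^4,f_5\rangle\simeq C_2\times C_2$. If $H$ is any subgroup containing $Z(G)$ and $\chi$ is a one-dimensional character of $H$, then every central element $z$ acts on $\fn{Ind}_H^G\chi$ by the scalar $\chi(z)$, so faithfulness on $Z(G)$ forces $\chi|_{Z(G)}$ to be injective, which is impossible when $Z(G)$ is non-cyclic. Thus an $8$-dimensional representation obtained by inducing a \emph{single} linear character from an index-$8$ abelian subgroup cannot be faithful here. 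The paper sidesteps this by working instead with an abelian subgroup $H$ of index $4$ (for $G(2^6,149)$ one takes $H=\langle f_2,f_5\rangle$) and inducing the \emph{two}-dimensional character $\chi_1\oplus\chi_2$, where $\chi_1(f_2)=\zeta_8$, $\chi_1(f_5)=1$ and $\chi_2(f_2)=1$, $\chi_2(f_5)=-1$; the resulting $8$-dimensional representation is then faithful on $Z(G)$ by inspection, hence faithful by Lemma~\ref{l3.9}. You should replace your index-$8$ construction by this device (or an equivalent one) to complete the argument.
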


\begin{proof}
We will find a faithful representation of the required degree for
the group $G$.

In the first case, when $p$ is odd and $G$ is the group given in
the theorem, look into the proof of Theorem \ref{t2.3} for the
generators and relations of $G$. The center of $G$ is $\langle f_5
\rangle$. Take $H= \langle f_3,f_4,f_5 \rangle$; $H$ is an abelian
group. Choose a linear character $\chi : H \to k^{\times}$ such
that $\chi(f_5)=\zeta_p$ and $\chi(f_3)=\chi(f_4)=1$. Designate
the induced representation of $\chi$ (from $H$ to $G$) by $G \to
GL(V)$. It is of degree $p^2$ and is faithful by Lemma \ref{l3.9}.
Note that $Br_{v,k}(k(V)^G)$ is isomorphic to $Br_{v,k}(k(G))$ by
the same arguments as in the proof of Theorem \ref{t4.2}.

For the projective variety $\bm{P}(V)/G$, we use Theorem
\ref{t3.1} and Lemma \ref{l1.3}. In fact, if $k(V)=k(x_i: 1 \le i
\le p^2)$, then $k(\bm{P}(V))=k(x_i/x_1: 2 \le i \le p^2)$. By
Theorem \ref{t3.1}, $k(x_i: 1 \le i \le p^2)^G=k(x_i/x_1: 2 \le i
\le p^2)^G(x)$ for some element $x$. These two fixed fields have
isomorphic unramified Brauer groups by Lemma \ref{l1.3}. Hence the
result.

Let now $G$ be of order $64$. By \cite[Lemma 5.5]{CHKK}, find the
generators and relations of $G$. We will discuss only the case
$G=G(149)$ and leave the other groups to the reader. When
$G=G(149)$, take the abelian subgroup $H= \langle f_2, f_5 \rangle$.
Note that $Z(G)=\langle f_2^4, f_5 \rangle$. Construct two linear
characters of $H$, $\chi_1$ and $\chi_2$, by $\chi_1(f_2)=\zeta_8,
\chi_1(f_5)=1$ and $\chi_2(f_2)=1, \chi_2(f_5)=-1$. Let $\chi$ be
the direct sum of $\chi_1$ and $\chi_2$. The induced representation
is of degree $8$. The rest of the proof is the same as above.

\end{proof}

\newpage
\renewcommand{\refname}{\centering{References}}

\end{document}